\documentclass[12pt]{amsart}
\usepackage{amsmath,amsthm,amsfonts,amssymb,mathrsfs}
\date{\today}

\input xymatrix
\xyoption{all}

\usepackage{color}

  \setlength{\textwidth}{18.3truecm}
  \setlength{\textheight}{24.5truecm}
  \setlength{\oddsidemargin}{-25pt}
  \setlength{\evensidemargin}{-25pt}
  \setlength{\topmargin}{-40pt}


\newtheorem{theorem}{Theorem}[section]
\newtheorem{proposition}[theorem]{Proposition}
\newtheorem{corollary}[theorem]{Corollary}
\newtheorem{lemma}[theorem]{Lemma}

\theoremstyle{definition}
\newtheorem{remark}[theorem]{Remark}

\usepackage{hyperref}

\begin{document}

\title[On the monoid of monotone injective partial selfmaps of $\mathbb{N}^{2}_{\leqslant}$ with cofinite ...]{On the monoid of monotone injective partial selfmaps of $\mathbb{N}^{2}_{\leqslant}$ with cofinite domains and images}

\author[O.~Gutik and I.~Pozdniakova]{Oleg~Gutik and Inna Pozdniakova}
\address{Department of Mathematics, National University of Lviv, Universytetska 1, Lviv, 79000, Ukraine}
\email{o\_gutik@franko.lviv.ua, ovgutik@yahoo.com, pozdnyakova.inna@gmail.com}

\keywords{Semigroup of partial bijections, monotone partial map, idempotent, Green's relations. }

\subjclass[2010]{20M20, 20M30}

\begin{abstract}
Let $\mathbb{N}^{2}_{\leqslant}$ be the set $\mathbb{N}^{2}$ with the partial order defined as the product of usual order $\leq$ on the set of positive integers $\mathbb{N}$. We study the semigroup $\mathscr{P\!O}\!_{\infty}(\mathbb{N}^2_{\leqslant})$ of monotone injective partial selfmaps of $\mathbb{N}^{2}_{\leqslant}$ having cofinite domain and image. We describe properties of elements of the semigroup $\mathscr{P\!O}\!_{\infty}(\mathbb{N}^2_{\leqslant})$ as monotone partial bijections of $\mathbb{N}^{2}_{\leqslant}$ and show that the group of units of $\mathscr{P\!O}\!_{\infty}(\mathbb{N}^2_{\leqslant})$ is isomorphic to the cyclic group of order two. Also we describe the subsemigroup of idempotents of $\mathscr{P\!O}\!_{\infty}(\mathbb{N}^2_{\leqslant})$ and the Green relations on $\mathscr{P\!O}\!_{\infty}(\mathbb{N}^2_{\leqslant})$. In particular, we show that $\mathscr{D}=\mathscr{J}$ in $\mathscr{P\!O}\!_{\infty}(\mathbb{N}^2_{\leqslant})$.
\end{abstract}

\maketitle


\section{Introduction and preliminaries}

We shall follow the terminology of~\cite{Clifford-Preston-1961-1967} and \cite{Howie-1995}.

In this paper we shall denote the cardinality of the set $A$ by $|A|$.  We shall identify all sets $X$ with their cardinality $|X|$. By $\mathbb{Z}_2$ we shall denote the cyclic group of order two. Also, for infinite subsets $A$ and $B$ of an infinite set $X$ we shall write $A{\subseteq^*}B$ if and only if there exists a finite subset $A_0$ of $A$ such that $A\setminus A_0\subseteq B$.

An algebraic semigroup $S$ is called {\it inverse} if for any element $x\in S$ there exists a unique $x^{-1}\in S$ such that $xx^{-1}x=x$ and $x^{-1}xx^{-1}=x^{-1}$. The element $x^{-1}$ is called the {\it inverse of} $x\in S$.

If $S$ is a semigroup, then we shall denote the subset of idempotents in $S$ by $E(S)$. If $S$ is an inverse semigroup, then $E(S)$ is closed under multiplication and we shall refer to $E(S)$ as a \emph{band} (or the \emph{band of} $S$). If the band $E(S)$ is a non-empty subset of $S$, then the semigroup operation on $S$ determines the following partial order $\leqslant$ on $E(S)$: $e\leqslant f$ if and only if $ef=fe=e$. This order is called the {\em natural partial order} on $E(S)$. A \emph{semilattice} is a commutative semigroup of idempotents. A semilattice $E$ is called {\em linearly ordered} or a \emph{chain} if its natural order is a linear order.

If $S$ is a semigroup, then we shall denote the Green relations on $S$ by $\mathscr{R}$, $\mathscr{L}$, $\mathscr{J}$, $\mathscr{D}$ and $\mathscr{H}$ (see \cite[Section~2.1]{Clifford-Preston-1961-1967}):
\begin{align*}
    &\qquad a\mathscr{R}b \mbox{ if and only if } aS^1=bS^1;\\
    &\qquad a\mathscr{L}b \mbox{ if and only if } S^1a=S^1b;\\
    &\qquad a\mathscr{J}b \mbox{ if and only if } S^1aS^1=S^1bS^1;\\
    &\qquad \mathscr{D}=\mathscr{L}\circ\mathscr{R}=
          \mathscr{R}\circ\mathscr{L};\\
    &\qquad \mathscr{H}=\mathscr{L}\cap\mathscr{R}.
\end{align*}
The $\mathscr{R}$-class (resp., $\mathscr{L}$-, $\mathscr{H}$-, $\mathscr{D}$- or $\mathscr{J}$-class) of the semigroup $S$ which contains an element $a$ of $S$ will be denoted by $R_a$ (resp., $L_a$, $H_a$, $D_a$ or $J_a$).

If $\alpha\colon X\rightharpoonup Y$ is a partial map, then by $\operatorname{dom}\alpha$ and $\operatorname{ran}\alpha$ we denote the domain and the range of $\alpha$, respectively.

Let $\mathscr{I}_\lambda$ denote the set of all partial one-to-one transformations of an infinite set $X$ of cardinality $\lambda$ together with the following semigroup operation: $x(\alpha\beta)=(x\alpha)\beta$ if $x\in\operatorname{dom}(\alpha\beta)=\{ y\in\operatorname{dom}\alpha\mid y\alpha\in\operatorname{dom}\beta\}$,  for $\alpha,\beta\in\mathscr{I}_\lambda$. The semigroup $\mathscr{I}_\lambda$ is called the \emph{symmetric inverse semigroup} over the set $X$~(see \cite[Section~1.9]{Clifford-Preston-1961-1967}). The symmetric inverse semigroup was introduced by Vagner~\cite{Vagner-1952} and it plays a major role in the semigroup theory. An element $\alpha\in\mathscr{I}_\lambda$ is called \emph{cofinite}, if the sets $\lambda\setminus\operatorname{dom}\alpha$ and $\lambda\setminus\operatorname{ran}\alpha$ are finite.

Let $(X,\leqslant)$ be a partially ordered set (a poset). A non-empty subset $A$ of $(X,\leqslant)$ is called a \emph{chain} if the induced partial order from $(X,\leqslant)$ onto $A$ is linear.  For an arbitrary $x\in X$ and non-empty $A\subseteq X$ we denote
 \begin{equation*}
{\uparrow}x=\left\{y\in X\colon x\leqslant y\right\}, \quad {\downarrow}x=\left\{y\in X\colon y\leqslant x\right\}, \quad \uparrow A=\bigcup_{x\in A}{\uparrow}x \quad \hbox{and} \quad \downarrow A=\bigcup_{x\in A}{\downarrow}x.
\end{equation*}
We shall say that a partial map $\alpha\colon X\rightharpoonup X$ is \emph{monotone} if $x\leqslant y$ implies $(x)\alpha\leqslant(y)\alpha$ for $x,y\in \operatorname{dom}\alpha$.

Let $\mathbb{N}$ be the set of positive integers with the usual linear order $\le$. On the Cartesian product $\mathbb{N}\times\mathbb{N}$ we define the product partial order, i.e.,
\begin{equation*}
    (i,m)\leqslant(j,n) \qquad \hbox{if and only if} \qquad (i\leq j) \quad \hbox{and} \quad (m\leq n).
\end{equation*}
Later the set $\mathbb{N}\times\mathbb{N}$ with this partial order will be denoted by $\mathbb{N}^2_{\leqslant}$.

By $\mathscr{P\!O}\!_{\infty}(\mathbb{N}^2_{\leqslant})$ we denote the subsemigroup of injective partial monotone selfmaps of $\mathbb{N}^2_{\leqslant}$ with cofinite domains and images. Obviously, $\mathscr{P\!O}\!_{\infty}(\mathbb{N}^2_{\leqslant})$ is a submonoid of the semigroup $\mathscr{I}_\omega$ and $\mathscr{P\!O}\!_{\infty}(\mathbb{N}^2_{\leqslant})$ is a countable semigroup.

Furthermore, we shall denote the identity of the semigroup $\mathscr{P\!O}\!_{\infty}(\mathbb{N}^2_{\leqslant})$ by $\mathbb{I}$ and the group of units of
$\mathscr{P\!O}\!_{\infty}(\mathbb{N}^2_{\leqslant})$ by $H(\mathbb{I})$.

It well known that each partial injective cofinite selfmap $f$ of $\lambda$ induces a homeomorphism $f^*\colon\lambda^*\rightarrow\lambda^*$ of the remainder $\lambda^*=\beta\lambda\setminus\lambda$ of the Stone-\v{C}ech compactification of the discrete space $\lambda$. Moreover, under some set theoretic axioms (like \textbf{PFA} or \textbf{OCA}), each homeomorphism of $\omega^*$ is induced by some partial injective cofinite selfmap of $\omega$, where $\omega$ is a first infinite cardinal (see  \cite{ShelahSteprans1989}--\cite{Velickovic1993} and the corresponding sections in the book \cite{Weaver-2014}). Thus, the inverse semigroup  $\mathscr{I}^{\mathrm{cf}}_\lambda$ of injective partial selfmaps of an infinite cardinal $\lambda$ with cofinite domains and images admits a natural homomorphism  $\mathfrak{h}\colon \mathscr{I}^{\mathrm{cf}}_\lambda\rightarrow \mathscr{H}(\lambda^*)$ to the homeomorphism group $\mathscr{H}(\lambda^*)$ of $\lambda^*$ and this homomorphism is surjective under certain set theoretic assumptions.

In the paper \cite{Gutik-Repovs-2015}  algebraic properties of the semigroup
$\mathscr{I}^{\mathrm{cf}}_\lambda$ are studied. It is shown that
$\mathscr{I}^{\mathrm{cf}}_\lambda$ is a bisimple inverse semigroup
and that for every non-empty chain $L$ in
$E(\mathscr{I}^{\mathrm{cf}}_\lambda)$ there exists an inverse
subsemigroup $S$ of $\mathscr{I}^{\mathrm{cf}}_\lambda$ such that
$S$ is isomorphic to the bicyclic semigroup and $L\subseteq E(S)$,
described the Green relations on $\mathscr{I}^{\mathrm{cf}}_\lambda$
and proved that every non-trivial congruence on
$\mathscr{I}^{\mathrm{cf}}_\lambda$ is a group congruence. Also, the structure of the quotient semigroup $\mathscr{I}^{\mathrm{cf}}_\lambda/\sigma$, where $\sigma$ is the least group congruence on $\mathscr{I}^{\mathrm{cf}}_\lambda$, is described.

The semigroups $\mathscr{I}_{\infty}^{\!\nearrow}(\mathbb{N})$ and $\mathscr{I}_{\infty}^{\!\nearrow}(\mathbb{Z})$ of injective isotone partial selfmaps with cofinite domains and images of positive integers and integers, respectively, are studied in \cite{Gutik-Repovs-2011} and \cite{Gutik-Repovs-2012}. It was proved that the semigroups $\mathscr{I}_{\infty}^{\!\nearrow}(\mathbb{N})$ and $\mathscr{I}_{\infty}^{\!\nearrow}(\mathbb{Z})$ have similar properties to the bicyclic semigroup: they are bisimple and  every non-trivial homomorphic image $\mathscr{I}_{\infty}^{\!\nearrow}(\mathbb{N})$ and $\mathscr{I}_{\infty}^{\!\nearrow}(\mathbb{Z})$ is a group, and moreover the semigroup $\mathscr{I}_{\infty}^{\!\nearrow}(\mathbb{N})$ has $\mathbb{Z}(+)$ as a maximal group image and $\mathscr{I}_{\infty}^{\!\nearrow}(\mathbb{Z})$ has $\mathbb{Z}(+)\times\mathbb{Z}(+)$, respectively.

In the paper \cite{Gutik-Pozdnyakova-2014} we studied the semigroup
$\mathscr{I\!O}\!_{\infty}(\mathbb{Z}^n_{\operatorname{lex}})$ of monotone injective partial selfmaps of the set of $L_n\times_{\operatorname{lex}}\mathbb{Z}$ having cofinite domain and image, where $L_n\times_{\operatorname{lex}}\mathbb{Z}$ is the lexicographic product of $n$-elements chain and the set of integers with the usual linear order. We described the Green relations on $\mathscr{I\!O}\!_{\infty}(\mathbb{Z}^n_{\operatorname{lex}})$,
showed that the semigroup $\mathscr{I\!O}\!_{\infty}(\mathbb{Z}^n_{\operatorname{lex}})$ is
bisimple and established its projective congruences. Also, we proved that $\mathscr{I\!O}\!_{\infty}(\mathbb{Z}^n_{\operatorname{lex}})$ is finitely generated, every automorphism of $\mathscr{I\!O}\!_{\infty}(\mathbb{Z})$ is inner, and showed that in the case $n\geqslant 2$ the semigroup $\mathscr{I\!O}\!_{\infty}(\mathbb{Z}^n_{\operatorname{lex}})$ has non-inner automorphisms. In \cite{Gutik-Pozdnyakova-2014} we proved that for every positive integer $n$ the quotient semigroup
$\mathscr{I\!O}\!_{\infty}(\mathbb{Z}^n_{\operatorname{lex}})/\sigma$, where $\sigma$ is a least group congruence on $\mathscr{I\!O}\!_{\infty}(\mathbb{Z}^n_{\operatorname{lex}})$, is isomorphic to the direct power $\left(\mathbb{Z}(+)\right)^{2n}$. The structure of the sublattice of congruences on $\mathscr{I\!O}\!_{\infty}(\mathbb{Z}^n_{\operatorname{lex}})$ which are contained in the least group congruence is described in \cite{Gutik-Pozdniakova-2014}.

In this paper we study algebraic properties of the semigroup $\mathscr{P\!O}\!_{\infty}(\mathbb{N}^2_{\leqslant})$. We describe properties of elements of the semigroup $\mathscr{P\!O}\!_{\infty}(\mathbb{N}^2_{\leqslant})$ as monotone partial bijection of $\mathbb{N}^{2}_{\leqslant}$ and show that the group of units of $\mathscr{P\!O}\!_{\infty}(\mathbb{N}^2_{\leqslant})$ is isomorphic to the cyclic group of the order two. Also, the subsemigroup of idempotents of $\mathscr{P\!O}\!_{\infty}(\mathbb{N}^2_{\leqslant})$ and the Green relations on $\mathscr{P\!O}\!_{\infty}(\mathbb{N}^2_{\leqslant})$ are described. In particular, we show that $\mathscr{D}=\mathscr{J}$ in $\mathscr{P\!O}\!_{\infty}(\mathbb{N}^2_{\leqslant})$.


\section{Properties of elements of the semigroup
$\mathscr{P\!O}\!_{\infty}(\mathbb{N}^2_{\leqslant})$ as monotone partial permutations}\label{section-2}

In this short section we describe properties of elements of the semigroup
$\mathscr{P\!O}\!_{\infty}(\mathbb{N}^2_{\leqslant})$ as monotone partial transformations of the poset $\mathbb{N}^2_{\leqslant}$.

For any $n\in\mathbb{N}$ and an arbitrary $\alpha\in \mathscr{P\!O}\!_{\infty}(\mathbb{N}^2_{\leqslant})$ we denote:
\begin{equation*}
\begin{split}
  \textsf{V}^n =\{(n,j)\colon j\in\mathbb{N}\}; \qquad & \qquad \textsf{H}^n =\{(j,n)\colon j\in\mathbb{N}\};\\
  \textsf{V}^n_{\operatorname{dom}\alpha}=\textsf{V}^n\cap\operatorname{dom}\alpha;   \qquad & \qquad
  \textsf{V}^n_{\operatorname{ran}\alpha}=\textsf{V}^n\cap\operatorname{ran}\alpha;\\
  \textsf{H}^n_{\operatorname{dom}\alpha}=\textsf{H}^n\cap\operatorname{dom}\alpha;   \qquad & \qquad
  \textsf{H}^n_{\operatorname{ran}\alpha}=\textsf{H}^n\cap\operatorname{ran}\alpha.
\end{split}
\end{equation*}

\begin{remark}\label{remark-2.1}
We observe that the definition of the semigroup $\mathscr{P\!O}\!_{\infty}(\mathbb{N}^2_{\leqslant})$ implies that for any $n\in\mathbb{N}$ and arbitrary $\alpha\in \mathscr{P\!O}\!_{\infty}(\mathbb{N}^2_{\leqslant})$ the sets $\textsf{V}^n_{\operatorname{dom}\alpha}$, $\textsf{V}^n_{\operatorname{ran}\alpha}$, $\textsf{H}^n_{\operatorname{dom}\alpha}$ and $\textsf{H}^n_{\operatorname{ran}\alpha}$ are infinite, and moreover all of these sets with the partial order induced from $\mathbb{N}^2_{\leqslant}$ are order isomorphic to $(\mathbb{N},\leq)$.
\end{remark}

\begin{lemma}\label{lemma-2.2}
There exists no element $\alpha$ of the semigroup $\mathscr{P\!O}\!_{\infty}(\mathbb{N}^2_{\leqslant})$ such that $(m,n)<(m,n)\alpha$ for some $(m,n)\in\operatorname{dom}\alpha$.
\end{lemma}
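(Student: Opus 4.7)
The plan is to argue by contradiction and derive a numerical impossibility of the form $p+q-2\leq m+n-2$, while $(m,n)<(p,q)$ forces $m+n<p+q$. Set $(p,q):=(m,n)\alpha$. The first step is to observe that monotonicity of $\alpha$ implies $\alpha({\uparrow}(m,n)\cap\operatorname{dom}\alpha)\subseteq{\uparrow}(p,q)$, so passing to complements and using $\mathbb{N}^{2}\setminus{\uparrow}(m,n)=\bigcup_{a=1}^{m-1}\textsf{V}^{a}\cup\bigcup_{b=1}^{n-1}\textsf{H}^{b}$ yields
\[
\operatorname{ran}\alpha\cap(\mathbb{N}^{2}\setminus{\uparrow}(p,q))\subseteq\bigcup_{a=1}^{m-1}\alpha(\textsf{V}^{a}_{\operatorname{dom}\alpha})\cup\bigcup_{b=1}^{n-1}\alpha(\textsf{H}^{b}_{\operatorname{dom}\alpha}).
\]
Because each $\textsf{V}^{a}_{\operatorname{dom}\alpha}$ and $\textsf{H}^{b}_{\operatorname{dom}\alpha}$ is order-isomorphic to $(\mathbb{N},\leq)$ by Remark~\ref{remark-2.1}, and $\alpha$ is injective and monotone, the right-hand side is a union of exactly $m+n-2$ infinite chains in $\mathbb{N}^{2}_{\leqslant}$.

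Next I will establish the structural fact that in any infinite chain $C\subseteq\mathbb{N}^{2}_{\leqslant}$ both coordinates are non-decreasing and at most one of them can stay bounded; consequently $C$ is eventually contained either in exactly one column $\textsf{V}^{k}$ or in exactly one row $\textsf{H}^{l}$, but never in a column and a row simultaneously. Indeed, if infinitely many elements of $C$ share first coordinate $k$, then from the first such element onward the first coordinate is locked at $k$, which forces the second coordinate to strictly increase and hence to diverge; the argument for rows is symmetric.

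To conclude: for each $k\in\{1,\dots,p-1\}$ we have $\textsf{V}^{k}\subseteq\mathbb{N}^{2}\setminus{\uparrow}(p,q)$ and $\textsf{V}^{k}\cap\operatorname{ran}\alpha$ is infinite by Remark~\ref{remark-2.1}, so by the pigeonhole principle some chain from our union has infinite intersection with $\textsf{V}^{k}$ and hence, by the structural fact, is eventually in $\textsf{V}^{k}$. Symmetrically, for each $l\in\{1,\dots,q-1\}$ some chain of the union is eventually in $\textsf{H}^{l}$. These $(p-1)+(q-1)=p+q-2$ chains are pairwise distinct (they are eventually in different columns, in different rows, or one in a column and one in a row) and all lie in the family of $m+n-2$ chains, so $p+q-2\leq m+n-2$, which contradicts $m+n<p+q$. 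The main delicacy of the argument is the structural fact about chains, which genuinely uses the two-dimensional product structure of $\mathbb{N}^{2}_{\leqslant}$: for a linearly ordered domain the analogous statement fails, and a cofinite monotone injective partial selfmap can indeed shift points upward there.
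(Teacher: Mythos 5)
Your proof is correct and follows essentially the same route as the paper: both arguments compare the $m+n-2$ vertical/horizontal lines below $(m,n)$ with the $p+q-2$ lines below $(p,q)=(m,n)\alpha$, use monotonicity to confine the relevant preimages to the former family, and derive a contradiction from the fact that the monotone injective image of a chain is a chain which cannot meet two distinct rows or columns in infinite sets. The only differences are cosmetic: the paper runs the pigeonhole in the dual form (``some domain line maps infinitely into two distinct range lines''), while you inject the $p+q-2$ range lines into the $m+n-2$ image chains, and you spell out the structural fact about infinite chains in $\mathbb{N}^2_{\leqslant}$ that the paper leaves implicit.
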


\begin{proof}
Suppose the contrary, i.e., that there exists an element $\alpha$ of the semigroup $\mathscr{P\!O}\!_{\infty}(\mathbb{N}^2_{\leqslant})$ such that $(m,n)<(m,n)\alpha$ for some $(m,n)\in\operatorname{dom}\alpha$. We denote $(m,n)\alpha=(i,j)$. Then our assumption implies that the family of subsets
\begin{equation*}
\mathfrak{R}_\alpha=\left\{\textsf{V}^k_{\operatorname{ran}\alpha}\colon k<i\right\}\cup \left\{\textsf{H}^k_{\operatorname{ran}\alpha}\colon k<j\right\}
\end{equation*}
has more elements than the family
\begin{equation*}
\mathfrak{D}_\alpha=\left\{\textsf{V}^k_{\operatorname{dom}\alpha}\colon k<m\right\}\cup \left\{\textsf{H}^k_{\operatorname{dom}\alpha}\colon k<n\right\}.
\end{equation*}
Then there exist $A\in\mathfrak{D}_\alpha$ and distinct $B_1,B_2\in\mathfrak{R}_\alpha$ such that the following conditions hold:
\begin{itemize}
  \item[$(i)$] $(p,q)\alpha\in B_1$ for infinitely many $(p,q)\in A$; \; and
  \item[$(ii)$] $(s,t)\alpha\in B_2$ for infinitely many $(s,t)\in A$.
\end{itemize}
We observe that $A$ is a linearly ordered subset of the poset $\mathbb{N}^2_{\leqslant}$. Hence, the definition of the semigroup $\mathscr{P\!O}\!_{\infty}(\mathbb{N}^2_{\leqslant})$ implies that the image $(A)\alpha$ must be a linearly ordered subset of the poset $\mathbb{N}^2_{\leqslant}$ as well. This implies that one of the following conditions holds:
\begin{itemize}
  \item[$(a)$] there exist distinct elements $\textsf{V}^{k_1}_{\operatorname{ran}\alpha}$ and $\textsf{V}^{k_2}_{\operatorname{ran}\alpha}$ of the family $\mathfrak{R}_\alpha$ such that the sets $\textsf{V}^{k_1}_{\operatorname{ran}\alpha}\cap(A)\alpha$ and $\textsf{V}^{k_2}_{\operatorname{ran}\alpha}\cap(A)\alpha$ are infinite;
  \item[$(b)$] there exist distinct elements $\textsf{H}^{k_1}_{\operatorname{ran}\alpha}$ and $\textsf{H}^{k_2}_{\operatorname{ran}\alpha}$ of the family $\mathfrak{R}_\alpha$ such that the sets $\textsf{H}^{k_1}_{\operatorname{ran}\alpha}\cap(A)\alpha$ and $\textsf{H}^{k_2}_{\operatorname{ran}\alpha}\cap(A)\alpha$ are infinite;
  \item[$(c)$] there exist distinct elements $\textsf{V}^{k_1}_{\operatorname{ran}\alpha}$ and $\textsf{H}^{k_2}_{\operatorname{ran}\alpha}$ of the family $\mathfrak{R}_\alpha$ such that the sets $\textsf{V}^{k_1}_{\operatorname{ran}\alpha}\cap(A)\alpha$ and $\textsf{H}^{k_2}_{\operatorname{ran}\alpha}\cap(A)\alpha$ are infinite.
\end{itemize}
Each of the above conditions contradicts the fact that $(A)\alpha$ is a linearly ordered subset of the poset $\mathbb{N}^2_{\leqslant}$. The obtained contradiction implies the statement of the lemma.
\end{proof}

By $\varpi$ we denote the bijective transformation of $\mathbb{N}\times\mathbb{N}$ defined by the formula $(i,j)\varpi=(j,i)$, for any $(i,j)\in\mathbb{N}\times\mathbb{N}$. It is obvious that $\varpi$ is an element of the semigroup $\mathscr{P\!O}\!_{\infty}(\mathbb{N}^2_{\leqslant})$ and $\varpi\varpi=\mathbb{I}$.

\begin{lemma}\label{lemma-2.3}
There exists no element $\alpha$ of the semigroup $\mathscr{P\!O}\!_{\infty}(\mathbb{N}^2_{\leqslant})$ such that $(n,m)<(m,n)\alpha$ for some $(m,n)\in\operatorname{dom}\alpha$.
\end{lemma}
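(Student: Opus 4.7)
The plan is to deduce this from Lemma~\ref{lemma-2.2} by using the involution $\varpi\in\mathscr{P\!O}\!_{\infty}(\mathbb{N}^2_{\leqslant})$ that swaps the two coordinates, which was introduced immediately before the statement. The key point is that $\varpi$ is not just a member of $\mathscr{P\!O}\!_{\infty}(\mathbb{N}^2_{\leqslant})$ but actually an order automorphism of $\mathbb{N}^2_{\leqslant}$ (since $(i,j)\leqslant(k,l)$ iff $(j,i)\leqslant(l,k)$), so precomposing any $\alpha$ with $\varpi$ produces another element of $\mathscr{P\!O}\!_{\infty}(\mathbb{N}^2_{\leqslant})$ whose behaviour at the swapped point carries the order information of $\alpha$ at the original point.

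Concretely, I would argue by contradiction: suppose $\alpha\in\mathscr{P\!O}\!_{\infty}(\mathbb{N}^2_{\leqslant})$ and $(m,n)\in\operatorname{dom}\alpha$ with $(n,m)<(m,n)\alpha$. Put $\beta=\varpi\alpha$. Since $\mathscr{P\!O}\!_{\infty}(\mathbb{N}^2_{\leqslant})$ is a semigroup and $\varpi,\alpha\in\mathscr{P\!O}\!_{\infty}(\mathbb{N}^2_{\leqslant})$, we have $\beta\in\mathscr{P\!O}\!_{\infty}(\mathbb{N}^2_{\leqslant})$. Using the composition convention $x(\varpi\alpha)=(x\varpi)\alpha$, from $(n,m)\varpi=(m,n)\in\operatorname{dom}\alpha$ we get $(n,m)\in\operatorname{dom}\beta$ and
\begin{equation*}
(n,m)\beta=((n,m)\varpi)\alpha=(m,n)\alpha>(n,m).
\end{equation*}
Thus $\beta$ is an element of $\mathscr{P\!O}\!_{\infty}(\mathbb{N}^2_{\leqslant})$ strictly moving the point $(n,m)$ upward in $\mathbb{N}^2_{\leqslant}$, which directly contradicts Lemma~\ref{lemma-2.2}.

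There is no substantive obstacle here: the whole argument is a one-line reduction via the symmetry $\varpi$, and the only small thing to verify explicitly is that $(n,m)$ belongs to $\operatorname{dom}\beta$, which follows immediately from the hypothesis $(m,n)\in\operatorname{dom}\alpha$ together with $\varpi^2=\mathbb{I}$. So the proof writes itself once one observes that conjugation/precomposition by $\varpi$ converts a statement about $(m,n)\mapsto(i,j)$ with $(n,m)<(i,j)$ into a statement about $(n,m)\mapsto(i,j)$ of the sort forbidden by Lemma~\ref{lemma-2.2}.
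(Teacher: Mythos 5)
Your proof is correct and follows essentially the same route as the paper: a one-line reduction to Lemma~\ref{lemma-2.2} via the involution $\varpi$. The only cosmetic difference is that you pre-compose (forming $\varpi\alpha$, which moves $(n,m)$ strictly upward), whereas the paper post-composes (forming $\alpha\varpi$, which moves $(m,n)$ strictly upward); both yield the same contradiction.
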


\begin{proof}
Suppose the contrary. Then there exists an element $\alpha$ of the semigroup $\mathscr{P\!O}\!_{\infty}(\mathbb{N}^2_{\leqslant})$ such that $(n,m)<(m,n)\alpha$ for some $(m,n)\in\operatorname{dom}\alpha$. Then we obtain that $(m,n)<(m,n)\alpha\varpi$, which contradicts Lemma~\ref{lemma-2.2}. The obtained contradiction implies the statement of our lemma.
\end{proof}

For arbitrary positive integer $l$ we define a partial map $\alpha_{\textsf{V}}^l\colon \mathbb{N}^2\rightharpoonup\mathbb{N}^2$ in the following way:
\begin{equation*}
\begin{split}
  \operatorname{dom}(\alpha_{\textsf{V}}^l)= & \; \mathbb{N}^2\setminus\left\{(1,1), \ldots,(l,1)\right\}, \qquad \operatorname{ran}(\alpha_{\textsf{V}}^l)=\mathbb{N}^2 \qquad \hbox{and} \\
    & (i,j)\alpha_{\textsf{V}}^l=
    \left\{
      \begin{array}{ll}
        (i,j),   & \hbox{if~} i>l;\\
        (i,j-1), & \hbox{if~} i\leq l.
      \end{array}
    \right.
\end{split}
\end{equation*}
It it obvious that $\alpha_{\textsf{V}}^l\in\mathscr{P\!O}\!_{\infty}(\mathbb{N}^2_{\leqslant})$ for any positive integer $l$.

\begin{lemma}\label{lemma-2.4}
For any element $\alpha$ of the semigroup $\mathscr{P\!O}\!_{\infty}(\mathbb{N}^2_{\leqslant})$ the following assertions hold:
\begin{itemize}
  \item[$(1)$] either \emph{$(\textsf{H}^{1}_{\operatorname{dom}\alpha})\alpha\subseteq \textsf{H}^1$} or \emph{$(\textsf{H}^1_{\operatorname{dom}\alpha})\alpha\subseteq \textsf{V}^1$};
  \item[$(2)$] either \emph{$(\textsf{V}^{1}_{\operatorname{dom}\alpha})\alpha\subseteq \textsf{V}^1$} or \emph{$(\textsf{V}^1_{\operatorname{dom}\alpha})\alpha\subseteq \textsf{H}^1$}.
\end{itemize}
\end{lemma}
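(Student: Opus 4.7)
My plan is to adapt the pigeonhole argument from the proof of Lemma~\ref{lemma-2.2}. First observe that $\textsf{H}^1\cup\textsf{V}^1$ coincides with the set of $x=(a,b)\in\mathbb{N}^2_{\leqslant}$ for which $\downarrow x$ is linearly ordered (when $a,b\geq 2$ the elements $(a,1)$ and $(1,b)$ of $\downarrow x$ are incomparable), and that any infinite chain contained in $\textsf{H}^1\cup\textsf{V}^1$ lies entirely in one of those two sets, since non-minimum elements of $\textsf{H}^1$ and $\textsf{V}^1$ are pairwise incomparable. By Remark~\ref{remark-2.1}, $\textsf{H}^1_{\operatorname{dom}\alpha}$ is an infinite chain, so its image $C=(\textsf{H}^1_{\operatorname{dom}\alpha})\alpha$ is an infinite chain in $\mathbb{N}^2_{\leqslant}$, and it suffices to show that $C\subseteq\textsf{H}^1\cup\textsf{V}^1$.

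I would argue this by contradiction. Suppose some $(k,1)\in\textsf{H}^1_{\operatorname{dom}\alpha}$ has $(k,1)\alpha=(i,j)$ with $i,j\geq 2$. Lemma~\ref{lemma-2.2} gives $(i,j)\not>(k,1)$; since $j\geq 2$, this forces $i<k$. Dually, Lemma~\ref{lemma-2.3} forces $j<k$. By monotonicity and injectivity of $\alpha$, every $(k',1)\in\textsf{H}^1_{\operatorname{dom}\alpha}$ with $k'>k$ then satisfies $(k',1)\alpha=(i',j')>(i,j)$ with $i',j'\geq 2$ and, by the same argument, $i'<k'$ and $j'<k'$. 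For such a $k'$, imitating the proof of Lemma~\ref{lemma-2.2}, I set
\begin{equation*}
\mathfrak{D}^\prime_\alpha=\left\{\textsf{V}^l_{\operatorname{dom}\alpha}:l<k'\right\},\qquad
\mathfrak{R}^\prime_\alpha=\left\{\textsf{V}^l_{\operatorname{ran}\alpha}:l<i'\right\}\cup\left\{\textsf{H}^l_{\operatorname{ran}\alpha}:l<j'\right\}.
\end{equation*}
Monotonicity of $\alpha$ ensures that every $(p,q)\in\operatorname{dom}\alpha$ with $(p,q)\alpha\in\bigcup\mathfrak{R}^\prime_\alpha$ satisfies $p<k'$, since otherwise $(p,q)\geq(k',1)$ would give $(p,q)\alpha\geq(i',j')$, contradicting membership in $\bigcup\mathfrak{R}^\prime_\alpha$. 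Once $|\mathfrak{R}^\prime_\alpha|>|\mathfrak{D}^\prime_\alpha|$ is established, the three-case argument in the proof of Lemma~\ref{lemma-2.2} produces $A\in\mathfrak{D}^\prime_\alpha$ and distinct $B_1,B_2\in\mathfrak{R}^\prime_\alpha$ with $(A)\alpha\cap B_s$ infinite for $s=1,2$, contradicting that $(A)\alpha$ is a chain in $\mathbb{N}^2_{\leqslant}$.

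Part~(2) will follow from part~(1) applied to $\varpi\alpha\in\mathscr{P\!O}\!_{\infty}(\mathbb{N}^2_{\leqslant})$: since $\varpi$ interchanges $\textsf{H}^1$ and $\textsf{V}^1$, one checks that $(\textsf{H}^1_{\operatorname{dom}(\varpi\alpha)})(\varpi\alpha)=(\textsf{V}^1_{\operatorname{dom}\alpha})\alpha$, so the conclusion of~(1) for $\varpi\alpha$ is precisely~(2) for $\alpha$. The main obstacle I anticipate is producing the strict inequality $|\mathfrak{R}^\prime_\alpha|>|\mathfrak{D}^\prime_\alpha|$, i.e., $i'+j'>k'+1$: the naive chain-length bound coming from mapping the chain $\downarrow(k',1)\cap\operatorname{dom}\alpha$ into $\downarrow(i',j')$ yields only $i'+j'\geq k'+1-|\mathbb{N}^2\setminus\operatorname{dom}\alpha|$, narrowly missing what is required. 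Closing this gap should use the fact that $C$ already lies strictly inside $[2,\infty)\times[2,\infty)$, which permits enlarging $\mathfrak{R}^\prime_\alpha$ by further rows and columns without breaking the preimage-containment argument above.
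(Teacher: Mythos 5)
Your reduction steps are sound: the observation that a chain contained in $\textsf{H}^1\cup\textsf{V}^1$ must lie entirely in one of the two sets, the derivation of $i<k$ and $j<k$ from Lemmas~\ref{lemma-2.2} and~\ref{lemma-2.3}, the propagation to all $k'>k$, and the deduction of assertion $(2)$ from assertion $(1)$ via $\varpi\alpha$ are all correct. But the pigeonhole step is a genuine gap, and you have flagged it yourself: you need $|\mathfrak{R}'_\alpha|>|\mathfrak{D}'_\alpha|$, i.e. $i'+j'-2>k'-1$, and the only available bounds are $i'<k'$, $j'<k'$ and the chain-length estimate $i'+j'\geq k'+1-\left|\mathbb{N}^2\setminus\operatorname{dom}\alpha\right|$, which misses what is needed by exactly the finite deficit; nothing at this stage of the argument excludes, say, $i'=j'=2$ with $k'$ large. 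Moreover the repair you sketch --- enlarging $\mathfrak{R}'_\alpha$ by further rows and columns --- cannot work as described: any additional line $\textsf{V}^l_{\operatorname{ran}\alpha}$ with $l\geq i'$ or $\textsf{H}^l_{\operatorname{ran}\alpha}$ with $l\geq j'$ meets ${\uparrow}(i',j')$, and since every $(p,q)\in\operatorname{dom}\alpha$ with $p\geq k'$ satisfies $(p,q)\alpha\geq(i',j')$, the preimages of such lines need not lie in $\bigcup\mathfrak{D}'_\alpha$; the containment that drives your pigeonhole is destroyed precisely when you try to add lines.

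The argument closes without any counting, and you already have all the ingredients. Keep only the two lines $\textsf{H}^1_{\operatorname{ran}\alpha}$ and $\textsf{V}^1_{\operatorname{ran}\alpha}$. Their preimages $P_H$ and $P_V$ are infinite (Remark~\ref{remark-2.1}) and both contained in the strip $\{(p,q)\colon p<k'\}$, because $(p,q)\geq(k',1)$ forces $(p,q)\alpha\geq(i',j')$ with $i',j'\geq 2$, which excludes both lines. Hence $P_H$ meets some column $\textsf{V}^a$ with $a<k'$ in an infinite set, and $P_V$ meets some column $\textsf{V}^b$ with $b<k'$ in an infinite set; in each case the second coordinates are unbounded. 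If $a\leq b$, choose $(a,y)\in P_H$ with $(a,y)\alpha\neq(1,1)$ and then $(b,z)\in P_V$ with $z\geq y$; then $(a,y)\leqslant(b,z)$, while $(a,y)\alpha=(c,1)$ with $c\geq 2$ and $(b,z)\alpha=(1,d)$, and $(c,1)\leqslant(1,d)$ is impossible. If $b<a$, the symmetric choice yields $(1,d)\leqslant(c,1)$ with $d\geq 2$, again impossible. This is exactly the mechanism of the paper's proof (its sets $A_1$, $B_1$ and the alternatives ${\uparrow}A_1\cap{\downarrow}B_1\neq\varnothing$ or ${\downarrow}A_1\cap{\uparrow}B_1\neq\varnothing$): the contradiction comes from the mutual incomparability of $\textsf{H}^1\setminus\{(1,1)\}$ and $\textsf{V}^1\setminus\{(1,1)\}$ applied to two infinite sets trapped in a strip of finite width, not from a comparison of the numbers of lines.
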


\begin{proof}
We shall show that assertion $(1)$ holds. The proof of $(2)$ is similar.

First we observe that $(\textsf{H}^{1}_{\operatorname{dom}\alpha})\alpha\subseteq \textsf{H}^1$ if and only if $(\textsf{H}^{1}_{\operatorname{dom}\alpha})\alpha\varpi\subseteq \textsf{V}^1$.

Suppose the contrary: there exists an element $\alpha$ of the semigroup $\mathscr{P\!O}\!_{\infty}(\mathbb{N}^2_{\leqslant})$ such that neither $(\textsf{H}^{1}_{\operatorname{dom}\alpha})\alpha\subseteq \textsf{H}^1$ nor $(\textsf{H}^1_{\operatorname{dom}\alpha})\alpha\subseteq \textsf{V}^1$. Then the definition of the semigroup $\mathscr{P\!O}\!_{\infty}(\mathbb{N}^2_{\leqslant})$, Lemma~\ref{lemma-2.2} and the above observation imply that without loss of generality we may assume that $(\textsf{H}^{1}_{\operatorname{dom}\alpha})\alpha\nsubseteq \textsf{H}^1\cup \textsf{V}^1$ and there exists $(k,1)\in\operatorname{dom}\alpha$ such that $(k,1)\alpha=(i,j)$, $j\neq 1$ and $2\leq i<k$. Also, by the definition of $\alpha_{\textsf{V}}^l\in\mathscr{P\!O}\!_{\infty}(\mathbb{N}^2_{\leqslant})$ we get that without loss of generality we may assume that $j=2$, i.e., $(k,1)\alpha=(i,2)$. Then there exist disjoint infinite subsets $A$ and $B$ of the set $\textsf{V}^1_{\operatorname{dom}\alpha}\cup \ldots \cup \textsf{V}^{k-1}_{\operatorname{dom}\alpha}$ such that
\begin{equation*}
    A\cup B=\textsf{V}^1_{\operatorname{dom}\alpha}\cup \ldots \cup \textsf{V}^{k-1}_{\operatorname{dom}\alpha}, \qquad
    \textsf{H}^{1}_{\operatorname{ran}\alpha}\subseteq (A)\alpha \qquad \hbox{and} \qquad
    \textsf{V}^{1}_{\operatorname{ran}\alpha}\cup \ldots\cup\textsf{V}^{k-1}_{\operatorname{ran}\alpha}\subseteq (B)\alpha.
\end{equation*}

If $A\cap\textsf{V}^1_{\operatorname{dom}\alpha}\neq\varnothing$ then the definition of the semigroup $\mathscr{P\!O}\!_{\infty}(\mathbb{N}^2_{\leqslant})$ and Lemma~\ref{lemma-2.2} imply that there exists $(a,b)\in B$ such that $(a,b)\alpha\in\textsf{V}^{1}_{\operatorname{ran}\alpha}$ and $(c,d)\leqslant(a,b)$ for some $(c,d)\in A$, which contradicts the definition of the partial order $\leqslant$ of the poset $\mathbb{N}^2_{\leqslant}$.

Assume that $A\subseteq\textsf{V}^2_{\operatorname{dom}\alpha}\cup\ldots\cup \textsf{V}^{k-1}_{\operatorname{dom}\alpha}$. Then there exist infinite subsets $A_1\subseteq A$ and $B_1\subseteq B$ such that $(A_1)\alpha=\textsf{H}^{1}_{\operatorname{ran}\alpha}\setminus\left\{(1,1)\right\}$ and $(B_1)\alpha=\textsf{V}^{1}_{\operatorname{ran}\alpha}\setminus\left\{(1,1)\right\}$. Hence the definition of the poset $\mathbb{N}^2_{\leqslant}$ implies that at least one of the following conditions holds: ${\uparrow}A_1\cap{\downarrow}B_1\neq\varnothing$ or ${\downarrow}A_1\cap{\uparrow}B_1\neq\varnothing$. If ${\uparrow}A_1\cap{\downarrow}B_1\neq\varnothing$ then $({\downarrow}B_1)\alpha\subseteq{\downarrow}\textsf{V}^{1}_{\operatorname{ran}\alpha}=\textsf{V}^{1}$ but $\textsf{V}^{1}\cap{\uparrow}\left(\textsf{H}^{1}_{\operatorname{ran}\alpha}\setminus\left\{(1,1)\right\}\right)\subseteq \textsf{V}^{1}\cap{\uparrow}\left(\textsf{H}^{1}\setminus\left\{(1,1)\right\}\right)=\varnothing$, a contradiction. Similarly, if ${\downarrow}A_1\cap{\uparrow}B_1\neq\varnothing$ then $({\downarrow}A_1)\alpha\subseteq{\downarrow}\textsf{H}^{1}_{\operatorname{ran}\alpha}=\textsf{H}^{1}$ and we get a contradiction with
\begin{equation*}
\textsf{H}^{1}\cap{\uparrow}\left(\textsf{V}^{1}_{\operatorname{ran}\alpha}\setminus\left\{(1,1)\right\}\right)\subseteq \textsf{H}^{1}\cap{\uparrow}\left(\textsf{V}^{1}\setminus\left\{(1,1)\right\}\right)=\varnothing.
\end{equation*}

The obtained contradictions imply the statement of the lemma.
\end{proof}

\begin{proposition}\label{proposition-2.5}
Let $\alpha$ be an arbitrary element of the semigroup $\mathscr{P\!O}\!_{\infty}(\mathbb{N}^2_{\leqslant})$. Then the following assertions hold:
\begin{itemize}
  \item[$(1)$] \emph{$(\textsf{H}^{1}_{\operatorname{dom}\alpha})\alpha\subseteq \textsf{H}^1$} if and only if \emph{$(\textsf{V}^{1}_{\operatorname{dom}\alpha})\alpha\subseteq \textsf{V}^1$}, and moreover in this case the sets \emph{$\textsf{H}^1\setminus(\textsf{H}^{1}_{\operatorname{dom}\alpha})\alpha$} and \emph{$\textsf{V}^1\setminus(\textsf{V}^{1}_{\operatorname{dom}\alpha})\alpha$} are finite;
  \item[$(2)$] \emph{$(\textsf{H}^1_{\operatorname{dom}\alpha})\alpha\subseteq \textsf{V}^1$} if and only if \emph{$(\textsf{V}^1_{\operatorname{dom}\alpha})\alpha\subseteq \textsf{H}^1$}, and moreover in this case \emph{$\textsf{V}^1\setminus(\textsf{H}^{1}_{\operatorname{dom}\alpha})\alpha$} and \emph{$\textsf{H}^1\setminus(\textsf{V}^{1}_{\operatorname{dom}\alpha})\alpha$} are finite.
\end{itemize}
\end{proposition}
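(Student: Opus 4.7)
The plan is to combine Lemma~\ref{lemma-2.4}, which already gives a dichotomy for each of $(\textsf{H}^{1}_{\operatorname{dom}\alpha})\alpha$ and $(\textsf{V}^{1}_{\operatorname{dom}\alpha})\alpha$, with the bounds coming from Lemmas~\ref{lemma-2.2} and~\ref{lemma-2.3}, and then to eliminate the two ``mixed'' configurations by a one-line cardinality argument. By Lemma~\ref{lemma-2.4} there are four a priori possibilities; the task is to kill those in which $(\textsf{H}^{1}_{\operatorname{dom}\alpha})\alpha$ and $(\textsf{V}^{1}_{\operatorname{dom}\alpha})\alpha$ both land in $\textsf{H}^1$ (or both in $\textsf{V}^1$), and then to extract finiteness of the complements.

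To rule out the configuration $(\textsf{H}^{1}_{\operatorname{dom}\alpha})\alpha\subseteq\textsf{H}^1$ and $(\textsf{V}^{1}_{\operatorname{dom}\alpha})\alpha\subseteq\textsf{H}^1$, I would write $(k,1)\alpha=(k',1)$ and $(1,m)\alpha=(s_m,1)$. Lemma~\ref{lemma-2.2} then forces $k'\leq k$, while Lemma~\ref{lemma-2.3}, applied with $(m,n)=(1,m)$, forces $s_m\leq m$. Each of these is a strictly increasing injection from a cofinite subset of $\mathbb{N}$ into $\mathbb{N}$ whose value at $n$ is at most $n$, so a trivial density count applies: if $N$ is the number of positive integers missing from the domain, then for every $K\in\mathbb{N}$ the initial segment $\{1,\ldots,K\}$ meets the image in at least $K-N$ elements, whence the image is cofinite in $\textsf{H}^1$. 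Thus both $(\textsf{H}^{1}_{\operatorname{dom}\alpha})\alpha$ and $(\textsf{V}^{1}_{\operatorname{dom}\alpha})\alpha$ are cofinite subsets of the countable set $\textsf{H}^1$; since $\alpha$ is injective and $\textsf{H}^{1}_{\operatorname{dom}\alpha}\cap\textsf{V}^{1}_{\operatorname{dom}\alpha}\subseteq\{(1,1)\}$, the two images are almost disjoint, which is impossible for two cofinite subsets of a countable set. The symmetric ``both land in $\textsf{V}^1$'' case is eliminated the same way, or by replacing $\alpha$ with $\alpha\varpi$.

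This leaves exactly the two configurations of assertions $(1)$ and $(2)$, and the same density count supplies the finiteness statements: in case $(1)$, Lemma~\ref{lemma-2.2} applied to the maps $(k,1)\mapsto(k',1)$ and $(1,m)\mapsto(1,s)$ (whose values at $n$ are bounded by $n$) shows that $\textsf{H}^1\setminus(\textsf{H}^{1}_{\operatorname{dom}\alpha})\alpha$ and $\textsf{V}^1\setminus(\textsf{V}^{1}_{\operatorname{dom}\alpha})\alpha$ are finite. Assertion $(2)$ then follows from $(1)$ applied to $\beta=\alpha\varpi\in\mathscr{P\!O}\!_{\infty}(\mathbb{N}^2_{\leqslant})$, translated back using $\varpi^2=\mathbb{I}$ and $\varpi(\textsf{H}^1)=\textsf{V}^1$. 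The only delicate point is the density count, and within it the crucial ingredient is Lemma~\ref{lemma-2.3}: without the bound $s_m\leq m$ the values of $(1,m)\alpha$ could drift arbitrarily to the right, the image need not be cofinite, and the mixed configurations could survive.
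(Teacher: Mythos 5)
Your proof is correct and follows essentially the same route as the paper, whose entire proof is the one-line assertion that the first statements follow from Lemma~\ref{lemma-2.4} and the finiteness claims from Lemma~\ref{lemma-2.2}. Your explicit injectivity-plus-cofiniteness count ruling out the two mixed configurations, and your use of Lemma~\ref{lemma-2.3} to get the bound $s_m\leq m$ on the images of $\textsf{V}^1_{\operatorname{dom}\alpha}$, supply exactly the details the paper leaves implicit.
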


\begin{proof}
The first statements of assertions $(1)$ and $(2)$ follow from Lemma~\ref{lemma-2.4} and their second parts follow from Lemma~\ref{lemma-2.2}.
\end{proof}

\begin{theorem}\label{theorem-2.6}
Let $\alpha$ be an arbitrary element of the semigroup $\mathscr{P\!O}\!_{\infty}(\mathbb{N}^2_{\leqslant})$ and $n$ be an arbitrary positive integer. Then the following assertions hold:
\begin{itemize}
  \item[$(1)$] if \emph{$(\textsf{H}^{1}_{\operatorname{dom}\alpha})\alpha\subseteq \textsf{H}^1$} then \emph{$(\textsf{H}^{n}_{\operatorname{dom}\alpha})\alpha{\subseteq^*} \textsf{H}^n$} and \emph{$(\textsf{V}^{n}_{\operatorname{dom}\alpha})\alpha{\subseteq^*} \textsf{V}^n$},  and moreover
      \begin{equation*}
      \emph{$(\textsf{H}^{1}_{\operatorname{dom}\alpha}\cup\ldots\cup\textsf{H}^{n}_{\operatorname{dom}\alpha})\alpha\subseteq
      \textsf{H}^1\cup\ldots\cup\textsf{H}^n$}  \hbox{~and~}
      \emph{$(\textsf{V}^{1}_{\operatorname{dom}\alpha}\cup\ldots\cup\textsf{V}^{n}_{\operatorname{dom}\alpha})\alpha\subseteq
      \textsf{V}^1\cup\ldots\cup\textsf{V}^n$};
      \end{equation*}
  \item[$(2)$] if \emph{$(\textsf{H}^1_{\operatorname{dom}\alpha})\alpha\subseteq \textsf{V}^1$} then \emph{$(\textsf{H}^{n}_{\operatorname{dom}\alpha})\alpha{\subseteq^*} \textsf{V}^n$} and \emph{$(\textsf{V}^{n}_{\operatorname{dom}\alpha})\alpha{\subseteq^*} \textsf{H}^n$},  and moreover
      \begin{equation*}
      \emph{$(\textsf{H}^{1}_{\operatorname{dom}\alpha}\cup\ldots\cup\textsf{H}^{n}_{\operatorname{dom}\alpha})\alpha\subseteq
      \textsf{V}^1\cup\ldots\cup\textsf{V}^n$} \hbox{~and~}
      \emph{$(\textsf{V}^{1}_{\operatorname{dom}\alpha}\cup\ldots\cup\textsf{V}^{n}_{\operatorname{dom}\alpha})\alpha\subseteq
      \textsf{H}^1\cup\ldots\cup\textsf{H}^n$}.
      \end{equation*}
\end{itemize}
\end{theorem}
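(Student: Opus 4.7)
My plan is to prove assertion~$(1)$ by induction on $n$ and deduce~$(2)$ by applying~$(1)$ to $\alpha\varpi$, since $\varpi$ interchanges horizontal and vertical lines. The base case $n=1$ is exactly Proposition~\ref{proposition-2.5}$(1)$. For the inductive step I fix $\alpha$ satisfying the hypothesis of~$(1)$ and assume its four conclusions at level $n$; I also need that $\alpha^{-1}$ satisfies the same hypothesis, which follows from Lemma~\ref{lemma-2.4} applied to $\alpha^{-1}$ together with the cofiniteness conclusion of Proposition~\ref{proposition-2.5}, since the alternative $(\textsf{H}^{1}_{\operatorname{dom}\alpha^{-1}})\alpha^{-1}\subseteq\textsf{V}^{1}$ would force infinitely many elements of $\textsf{H}^{1}$ into $\textsf{H}^{1}\cap\textsf{V}^{1}=\{(1,1)\}$.

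The key step is to restrict $\alpha$ to the shifted quadrant $U_n=\{(i,j)\in\mathbb{N}^2\colon i,j\geqslant n+1\}$, which is order-isomorphic to $\mathbb{N}^2_{\leqslant}$ via $(i,j)\mapsto(i-n,j-n)$. Using the inductive hypothesis for $\alpha^{-1}$ I will show $(U_n\cap\operatorname{dom}\alpha)\alpha\subseteq U_n$: if $(i,j)\in U_n\cap\operatorname{dom}\alpha$ had image $(p,q)$ with $p\leqslant n$, then $(p,q)\in\textsf{V}^{1}\cup\ldots\cup\textsf{V}^{n}$, and the inductive union assertion for $\alpha^{-1}$ would place $(i,j)$ in $\textsf{V}^{1}\cup\ldots\cup\textsf{V}^{n}$, contradicting $i\geqslant n+1$; the case $q\leqslant n$ is symmetric. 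Together with the dual inclusion $(U_n\cap\operatorname{ran}\alpha)\alpha^{-1}\subseteq U_n$, this shows that $\alpha$ restricts to a monotone partial bijection $\tilde\alpha$ between cofinite subsets of $U_n$, which via the order-isomorphism is an element of $\mathscr{P\!O}\!_{\infty}(\mathbb{N}^2_{\leqslant})$.

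Applying Lemma~\ref{lemma-2.4} to $\tilde\alpha$ then yields that $(\textsf{H}^{n+1}\cap U_n\cap\operatorname{dom}\alpha)\alpha$ is contained either in $\textsf{H}^{n+1}\cap U_n$ or in $\textsf{V}^{n+1}\cap U_n$. To rule out the vertical alternative I use the level-$1$ hypothesis: for cofinitely many $i\geqslant n+1$ both $(i,1)$ and $(i,n+1)$ lie in $\operatorname{dom}\alpha$ and $(i,1)\alpha=(t_i,1)\in\textsf{H}^{1}$ with $t_i$ strictly increasing, so $t_i\to\infty$; if $(i,n+1)\alpha=(n+1,q_i)\in\textsf{V}^{n+1}$, monotonicity applied to $(i,1)<(i,n+1)$ forces $t_i\leqslant n+1$, a contradiction for $i$ large. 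Hence $(\textsf{H}^{n+1}\cap U_n\cap\operatorname{dom}\alpha)\alpha\subseteq\textsf{H}^{n+1}$, and a symmetric argument gives the corresponding inclusion for $\textsf{V}^{n+1}\cap U_n$. The finitely many boundary elements $(j,n+1)\in\operatorname{dom}\alpha$ with $j\leqslant n$ are handled by picking some $(j',n+1)\in\textsf{H}^{n+1}\cap U_n\cap\operatorname{dom}\alpha$ and using monotonicity on $(j,n+1)<(j',n+1)$ to see that the image has second coordinate at most $n+1$; combining with the inductive union assertion at level $n$ yields all four conclusions of~$(1)$ at level $n+1$. The main obstacle I expect is verifying the hypothesis for $\alpha^{-1}$ so that the inductive hypothesis is usable on both sides; once this is done, the inductive step reduces to a single application of Lemma~\ref{lemma-2.4} on the shifted quadrant plus a monotonicity comparison with $\textsf{H}^{1}$.
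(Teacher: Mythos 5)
Your overall strategy (induction on $n$, base case from Proposition~\ref{proposition-2.5}, restriction to the shifted quadrant ${\uparrow}(n+1,n+1)$ which is order-isomorphic to $\mathbb{N}^2_{\leqslant}$, then one application of Lemma~\ref{lemma-2.4} to the restricted map) is exactly the paper's approach. However, there is a genuine gap in how you justify that the restriction is a legitimate element of $\mathscr{P\!O}\!_{\infty}(\mathbb{N}^2_{\leqslant})$: in two places you apply Lemma~\ref{lemma-2.4} and the inductive hypothesis to $\alpha^{-1}$, but $\alpha^{-1}$ need not belong to $\mathscr{P\!O}\!_{\infty}(\mathbb{N}^2_{\leqslant})$ at all. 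The semigroup consists of \emph{monotone} injective partial maps, and the inverse of a monotone injection between posets need not be monotone unless the map is an order-embedding. The paper's own element $\alpha_{\textsf{V}}^{l}$ is a counterexample: its inverse sends $(l,j)$ to $(l,j+1)$ and $(l+1,j)$ to $(l+1,j)$, so the comparable pair $(l,j)\leqslant(l+1,j)$ is sent to the incomparable pair $(l,j+1)$, $(l+1,j)$. The asymmetry is also visible in Lemma~\ref{lemma-2.2} versus its false converse, and in the one-sided inequality of Corollary~\ref{corollary-2.8}. Consequently neither Lemma~\ref{lemma-2.4} nor the inductive union assertion is available for $\alpha^{-1}$, and both your verification that ``$\alpha^{-1}$ satisfies the same hypothesis'' and your proof that $(U_n\cap\operatorname{dom}\alpha)\alpha\subseteq U_n$ collapse.

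This gap is repairable, because all you actually need is that the restriction of $\alpha$ to $U_n$ (on both the domain and the range side) is a partial bijection between \emph{cofinite} subsets of $U_n$; you do not need the exact inclusion $(U_n\cap\operatorname{dom}\alpha)\alpha\subseteq U_n$. That cofiniteness can be extracted from the inductive hypothesis for $\alpha$ alone together with injectivity: the union assertion at level $n$ confines the images of $\textsf{V}^{1}_{\operatorname{dom}\alpha}\cup\ldots\cup\textsf{V}^{n}_{\operatorname{dom}\alpha}\cup\textsf{H}^{1}_{\operatorname{dom}\alpha}\cup\ldots\cup\textsf{H}^{n}_{\operatorname{dom}\alpha}$ to $\textsf{V}^{1}\cup\ldots\cup\textsf{V}^{n}\cup\textsf{H}^{1}\cup\ldots\cup\textsf{H}^{n}$, and one must then argue (using the $\subseteq^{*}$ statements and the cofiniteness of $\operatorname{ran}\alpha$ in each line) that these images exhaust all but finitely many points of that set, so that by injectivity only finitely many points of $U_n\cap\operatorname{dom}\alpha$ can land outside $U_n$. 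This is essentially what the paper does implicitly when it defines $\alpha_{[k+1]}$ by restricting both the domain and the image to ${\uparrow}(k+1,k+1)$ and notes that the discarded part of $\textsf{H}^{k+1}_{\operatorname{dom}\alpha}$ is finite. Your subsequent step ruling out the alternative $(\textsf{H}^{n+1}\cap U_n\cap\operatorname{dom}\alpha)\alpha\subseteq\textsf{V}^{n+1}$ via monotonicity applied to $(i,1)<(i,n+1)$, and your treatment of the finitely many boundary points, are fine and in fact more explicit than the paper's wording; only the reliance on $\alpha^{-1}$ must be excised.
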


\begin{proof}
$(1)$ We shall prove this assertion by induction.

In the case when $n=1$ our statement follows from Lemma~\ref{lemma-2.4} and Proposition~\ref{proposition-2.5}. Next we shall show that the step of induction holds.

We assume that our assertion holds for arbitrary $\alpha\in\mathscr{P\!O}\!_{\infty}(\mathbb{N}^2_{\leqslant})$ and for all positive integers $n\le k$ and we shall prove that then the assertion is true in the case when $n=k+1$.

For an arbitrary element $\alpha$ of the semigroup $\mathscr{P\!O}\!_{\infty}(\mathbb{N}^2_{\leqslant})$ we define a partial map $\alpha_{[k+1]}\colon \mathbb{N}^2\rightharpoonup\mathbb{N}^2$ in the following way:
\begin{equation*}
\begin{split}
  (i,j)\alpha_{[k+1]} & \quad \hbox{is defined if and only if} \quad (i,j)\in\operatorname{dom}\alpha\cap{\uparrow}(k+1,k+1)  \\
                      & \quad \hbox{and} \quad (i,j)\alpha\in\operatorname{ran}\alpha\cap{\uparrow}(k+1,k+1),
                                    \quad \hbox{and moreover in this case we put}\\
                      & \quad (i,j)\alpha_{[k+1]}=(i,j)\alpha,
\end{split}
\end{equation*}
i.e., the partial map $\alpha_{[k+1]}\colon \mathbb{N}^2\rightharpoonup\mathbb{N}^2$ is the restriction of the partial map $\alpha\colon \mathbb{N}^2\rightharpoonup\mathbb{N}^2$ onto the set ${\uparrow}(k+1,k+1)$. Since the set ${\uparrow}(k+1,k+1)$ with the partial induced from $\mathbb{N}^2_{\leqslant}$ is order isomorphic to $\mathbb{N}^2_{\leqslant}$, the assumption of induction and Lemma~\ref{lemma-2.4} imply that
either $(\textsf{H}^{k+1}\cap\operatorname{dom}(\alpha_{[k+1]}))\alpha_{[k+1]}\subseteq \textsf{H}^{k+1}$ or $(\textsf{H}^{k+1}\cap\operatorname{dom}(\alpha_{[k+1]}))\alpha_{[k+1]}\subseteq \textsf{V}^{k+1}$. Then the inclusion
\begin{equation*}
{\downarrow}(\textsf{H}^{1}_{\operatorname{dom}\alpha}\cup\ldots\cup\textsf{H}^{k}_{\operatorname{dom}\alpha})\subseteq {\downarrow}(\textsf{H}^{1}_{\operatorname{dom}\alpha}\cup\ldots\cup\textsf{H}^{k}_{\operatorname{dom}\alpha}\cup\textsf{H}^{k+1}_{\operatorname{dom}\alpha})
\end{equation*}
implies that
\begin{equation*}
    (\textsf{H}^{k+1}\cap\operatorname{dom}(\alpha_{[k+1]}))\alpha=(\textsf{H}^{k+1}\cap\operatorname{dom}(\alpha_{[k+1]}))\alpha_{[k+1]}\subseteq \textsf{H}^{k+1}.
\end{equation*}
Hence we have that $(\textsf{H}^{k+1}_{\operatorname{dom}\alpha})\alpha{\subseteq^*} \textsf{H}^{k+1}$, because the set $\operatorname{dom}\alpha\setminus\operatorname{dom}(\alpha_{[k+1]})\cap\textsf{H}^{k+1}$ is finite. Also, since $(i,j)\leqslant(p,q)$ for all $(i,j)\in \operatorname{dom}\alpha\setminus\operatorname{dom}(\alpha_{[k+1]})\cap\textsf{H}^{k+1}$ and $(p,q)\in\operatorname{dom}(\alpha_{[k+1]})\cap\textsf{H}^{k+1}$, the definition of the semigroup $\mathscr{P\!O}\!_{\infty}(\mathbb{N}^2_{\leqslant})$, the assumption of induction and the inclusion $(\textsf{H}^{k+1}\cap\operatorname{dom}(\alpha_{[k+1]}))\alpha\subseteq \textsf{H}^{k+1}$ imply the requested inclusion
\begin{equation*}
    (\textsf{H}^{1}_{\operatorname{dom}\alpha}\cup\ldots\cup\textsf{H}^{k}_{\operatorname{dom}\alpha}\cup\textsf{H}^{k+1}_{\operatorname{dom}\alpha})\alpha\subseteq
      \textsf{H}^1\cup\ldots\cup\textsf{H}^{k}\cup\textsf{H}^{k+1}.
\end{equation*}

Again using indiction and Proposition~\ref{proposition-2.5} we get that the condition $(\textsf{H}^{1}_{\operatorname{dom}\alpha})\alpha\subseteq \textsf{H}^1$ implies that  $(\textsf{H}^{n}_{\operatorname{dom}\alpha})\alpha{\subseteq^*} \textsf{H}^n$ and $(\textsf{V}^{1}_{\operatorname{dom}\alpha}\cup\ldots\cup\textsf{V}^{n}_{\operatorname{dom}\alpha})\alpha\subseteq \textsf{V}^1\cup\ldots\cup\textsf{V}^n$ for every positive integer $n$.

$(2)$ If $(\textsf{H}^1_{\operatorname{dom}\alpha})\alpha\subseteq \textsf{V}^1$ then $(\textsf{H}^1_{\operatorname{dom}\alpha})\alpha\varpi\subseteq \textsf{H}^1$. Then assertion $(1)$ and the equality $\alpha\varpi\varpi=\alpha$ imply assertion $(2)$.
\end{proof}

The following theorem describes the structure of elements of the  semigroup $\mathscr{P\!O}\!_{\infty}(\mathbb{N}^2_{\leqslant})$ as monotone partial permutations of the poset $\mathbb{N}^2_{\leqslant}$.

\begin{theorem}\label{theorem-2.7}
Let $\alpha$ be an arbitrary element of the semigroup $\mathscr{P\!O}\!_{\infty}(\mathbb{N}^2_{\leqslant})$. Then the following assertions hold:
\begin{itemize}
  \item[$(1)$] if \emph{$(\textsf{H}^{1}_{\operatorname{dom}\alpha})\alpha\subseteq \textsf{H}^1$} then
     \begin{itemize}
       \item[$(i_1)$] $(i,j)\alpha\leqslant(i,j)$ for each $(i,j)\in\operatorname{dom}\alpha$; \: and
       \item[$(ii_1)$] there exists a smallest positive integer $n_{\alpha}$ such that $(i,j)\alpha=(i,j)$ for each $(i,j)\in\operatorname{dom}\alpha\cap{\uparrow}(n_{\alpha},n_{\alpha})$;
     \end{itemize}
  \item[$(2)$] if \emph{$(\textsf{H}^1_{\operatorname{dom}\alpha})\alpha\subseteq \textsf{V}^1$} then
     \begin{itemize}
       \item[$(i_2)$] $(i,j)\alpha\leqslant(j,i)$ for each $(i,j)\in\operatorname{dom}\alpha$; \: and
       \item[$(ii_2)$] there exists a smallest positive integer $n_{\alpha}$ such that $(i,j)\alpha=(j,i)$ for each $(i,j)\in\operatorname{dom}\alpha\cap{\uparrow}(n_{\alpha},n_{\alpha})$.
     \end{itemize}
\end{itemize}
\end{theorem}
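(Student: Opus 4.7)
My plan is to prove assertion $(1)$ directly and derive $(2)$ from $(1)$ via the involution $\varpi$. For $(i_1)$, given $(i,j)\in\operatorname{dom}\alpha$, applying the cumulative inclusions of Theorem~\ref{theorem-2.6}(1) at $n=i$ and $n=j$ yields
\begin{equation*}
(i,j)\alpha\in(\textsf{V}^{1}\cup\cdots\cup\textsf{V}^{i})\cap(\textsf{H}^{1}\cup\cdots\cup\textsf{H}^{j}),
\end{equation*}
which is precisely the condition $(i,j)\alpha\leqslant(i,j)$. The substantive content of the theorem lies in $(ii_1)$, which says that the non-identity behaviour of $\alpha$ is confined to a finite lower-left corner.

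To handle $(ii_1)$, I analyse $\alpha$ on each row and each column. For each $j$, Theorem~\ref{theorem-2.6}(1) makes the set $E_j:=\{(i,j)\in\textsf{H}^{j}_{\operatorname{dom}\alpha}\colon(i,j)\alpha\notin\textsf{H}^{j}\}$ finite, and on $\textsf{H}^{j}_{\operatorname{dom}\alpha}\setminus E_j$ the restriction takes the form $(i,j)\mapsto(g_j(i),j)$ for a strictly monotone injection $g_j$ of a cofinite subset of $\mathbb{N}$ into $\mathbb{N}$ with $g_j(i)\leq i$ by $(i_1)$. A direct check shows such a $g_j$ is eventually the shift $i\mapsto i-c_j$ for some integer $c_j\geq 0$, valid for $i$ past a threshold $K_j$. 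Comparing $\alpha(i,j)$ with $\alpha(i,j+1)$ on their common shift region yields $c_{j+1}\leq c_j$, so $\{c_j\}$ is non-increasing and stabilises at some $c_\infty\geq 0$; the symmetric column analysis produces non-negative constants $c'_i$, non-increasing in $i$, with limit $c'_\infty$ and column thresholds $K'_i$.

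The decisive step is to rule out $c'_\infty\geq 1$. Assume $c'_\infty\geq 1$ and take $i$ so large that $(i,1),\ldots,(i,c'_\infty+1)\in\operatorname{dom}\alpha$ and the eventual shift applies in column $i$; then $\alpha(i,c'_\infty+1)=(i,1)$. The point $(i,1)$ itself cannot lie in the nice part of column $i$, since its image would need second coordinate at most $1-c'_\infty\leq 0$, so $(i,1)\in E'_i$ and by $(i_1)$ we have $(i,1)\alpha=(p_i,1)$ with $p_i<i$. Row-$1$ shift analysis then forces $p_i=i-c_1$ for $i$ large, giving $c_1\geq 1$. But the row-$1$ shift at $(i+c_1,1)$ also produces $\alpha(i+c_1,1)=(i,1)$, furnishing a second preimage of $(i,1)$ distinct from $(i,c'_\infty+1)$—contradicting the injectivity of $\alpha$. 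Hence $c'_\infty=0$, and symmetrically $c_\infty=0$. A further cross-monotonicity argument (if $\alpha(i,j)=(i,j)$ then $(i,j)\leqslant(i+1,j)$, $(i_1)$, and injectivity together force $\alpha(i+1,j)=(i+1,j)$) shows that $K'_i$ is non-increasing in $i$ and eventually equal to a constant $K'$; similarly $K_j$ stabilises at $K$.

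Taking $n_\alpha$ to exceed $\max(I_0,J_0,K,K')$ and every coordinate appearing in $(\mathbb{N}^2\setminus\operatorname{dom}\alpha)\cup(\mathbb{N}^2\setminus\operatorname{ran}\alpha)$, any $(i,j)\in\operatorname{dom}\alpha\cap{\uparrow}(n_\alpha,n_\alpha)$ lies in the eventual-identity region of both its row and its column, whence $(i,j)\alpha\in\textsf{H}^{j}\cap\textsf{V}^{i}=\{(i,j)\}$; taking the smallest such $n_\alpha$ gives $(ii_1)$. Assertions $(i_2)$ and $(ii_2)$ then follow by applying (1) to $\alpha\varpi$: the hypothesis $(\textsf{H}^{1}_{\operatorname{dom}\alpha})\alpha\subseteq\textsf{V}^{1}$ is equivalent to $(\textsf{H}^{1}_{\operatorname{dom}(\alpha\varpi)})(\alpha\varpi)\subseteq\textsf{H}^{1}$, and composing with $\varpi$ on the right (using $\varpi^{2}=\mathbb{I}$) swaps coordinates and turns $(i,j)(\alpha\varpi)\leqslant(i,j)$ and eventual identity into $(i,j)\alpha\leqslant(j,i)$ and eventual equality with $(j,i)$. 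The main obstacle is the injectivity contradiction ruling out $c'_\infty\geq 1$: it requires careful bookkeeping of the nice parts of rows and columns, the eventual-shift formulas, and the two colliding preimages of $(i,1)$.
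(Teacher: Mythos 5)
Your proof of $(i_1)$ (directly from the cumulative inclusions of Theorem~\ref{theorem-2.6}$(1)$) and your reduction of $(2)$ to $(1)$ via $\varpi$ are both correct and match the paper. For $(ii_1)$ you take a genuinely different route --- the paper disposes of it with a short pigeonhole argument: if some $(i,j)\in{\uparrow}(n_0,n_0)$ moves, then the whole segment $\{(i,1),\dots,(i,i)\}$ together with $S_i\cap\operatorname{dom}\alpha$ is injected into the finite box $S_i=\{1,\dots,i-1\}^2$, which overflows once $i$ exceeds $|\mathbb{N}^2\setminus\operatorname{dom}\alpha|$. Your row/column shift analysis is a reasonable alternative, and several of its ingredients are sound (the deficiency $i-g_j(i)$ is eventually non-increasing hence eventually constant; $c_{j+1}\leq c_j$ by monotonicity; the propagation lemma ``$(i,j)\alpha=(i,j)$ forces $(i+1,j)\alpha=(i+1,j)$'' is correct and useful). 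But the decisive step --- ruling out $c'_\infty\geq 1$ --- has a genuine gap.

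Concretely, the collision argument needs the equality $\alpha(i,c'_\infty+1)=(i,1)$, and the exclusion of $(i,1)$ from the nice part of column $i$ is justified by asserting that its image ``would need second coordinate at most $1-c'_\infty$.'' Both assertions presuppose that the deficiency bound $j-h_i(j)\geq c'_\infty$, and indeed the exact shift formula $(i,j)\alpha=(i,j-c'_i)$, extend down to heights as small as $j=1$ and $j=c'_\infty+1$. That does not follow from what you have established: the deficiency is non-increasing only along gapless runs of the nice part of column $i$, and it can jump upward across the finite exceptional set $E'_i$ (points of the column whose images leave $\textsf{V}^i$); neither the location of $E'_i$ nor the threshold $K'_i$ is bounded uniformly in $i$ at this stage of the argument (your cross-monotonicity stabilization of $K'_i$ is only available after $c'_\infty=0$ is known, since it rests on the existence of fixed points). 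So the second preimage of $(i,1)$ in column $i$ never materializes, and the injectivity collision does not occur; indeed $c'_\infty\geq 1$ is perfectly consistent with all the local constraints you have imposed, because the surplus can escape through $E'_i$ into earlier columns. To close the gap one needs a global count --- for instance, each row $j$ with $c_j\geq 1$ contributes $c_j$ extra points of $\mathbb{N}^2\setminus\{1,\dots,N\}^2$ mapping into the box $\{1,\dots,N\}^2$, which the box maps into itself by Theorem~\ref{theorem-2.6}$(1)$, forcing $\sum_j c_j\leq|\mathbb{N}^2\setminus\operatorname{dom}\alpha|$ and hence $c_j=0$ for all large $j$ --- which is in essence the paper's pigeonhole on $S_i$.
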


\begin{proof}
$(1)$ Fix an arbitrary element $\alpha$ of the semigroup $\mathscr{P\!O}\!_{\infty}(\mathbb{N}^2_{\leqslant})$ such that $(\textsf{H}^{1}_{\operatorname{dom}\alpha})\alpha\subseteq \textsf{H}^1$. Suppose to the contrary that there exists $(i,j)\in\operatorname{dom}\alpha$ such that $(i,j)\alpha=(k,l)\nleqslant(i,j)$. Then Lemma~\ref{lemma-2.2}, Theorem~\ref{theorem-2.6}$(1)$ and the definition of the partial order of the poset $\mathbb{N}^2_{\leqslant}$ imply that $k>i$ and $l<j$. Now, by the definition of the semigroup $\mathscr{P\!O}\!_{\infty}(\mathbb{N}^2_{\leqslant})$ we get that there exists a positive integer $m\leq i$ such that
\begin{equation*}
(\textsf{V}^{1}_{\operatorname{dom}\alpha}\cup\ldots\cup\textsf{V}^{m}_{\operatorname{dom}\alpha})\alpha\nsubseteq
\textsf{V}^1\cup\ldots\cup\textsf{V}^m,
\end{equation*}
which contradicts Theorem~\ref{theorem-2.6}$(1)$. The obtained contradiction implies the requested inequality $(i,j)\alpha\leqslant(i,j)$ and this completes the proof of $(i)$.

Next we shall prove $(ii)$. Fix an arbitrary element $\alpha$ of the semigroup $\mathscr{P\!O}\!_{\infty}(\mathbb{N}^2_{\leqslant})$ such that $(\textsf{H}^{1}_{\operatorname{dom}\alpha})\alpha\subseteq \textsf{H}^1$. Suppose to the contrary that for any positive integer $n$ there exists $(i,j)\in\operatorname{dom}\alpha\cap{\uparrow}(n,n)$ such that $(i,j)\alpha\neq(i,j)$. We put $\textsf{N}_{\operatorname{dom}\alpha}=\left|\mathbb{N}^2\setminus\operatorname{dom}\alpha\right|+1$ and
\begin{equation*}
 \textsf{M}_{\operatorname{dom}\alpha}=\max\left\{\left\{i\colon(i,j)\notin\operatorname{dom}\alpha\right\}, \left\{j\colon(i,j)\notin\operatorname{dom}\alpha\right\}\right\}+1.
\end{equation*}
The definition of the semigroup $\mathscr{P\!O}\!_{\infty}(\mathbb{N}^2_{\leqslant})$ implies that the positive integers $\textsf{N}_{\operatorname{dom}\alpha}$ and $\textsf{M}_{\operatorname{dom}\alpha}$ are well defined. Put $n_0=\max\left\{\textsf{N}_{\operatorname{dom}\alpha},\textsf{M}_{\operatorname{dom}\alpha}\right\}$. Then our assumption implies that there exists $(i,j)\in\operatorname{dom}\alpha\cap{\uparrow}(n_0,n_0)$ such that $(i,j)\alpha=(i_{\alpha},j_{\alpha})\neq(i,j)$. By $(i)$, we have that $(i_{\alpha},j_{\alpha})<(i,j)$. We consider the case when $i_{\alpha}<i$. In the case when $j_{\alpha}<j$ the proof is similar. Assume that $i\leq j$. By Theorem~\ref{theorem-2.6} the partial bijection $\alpha$ maps the set $S_i=\left\{(n,m)\colon n ,m\leq i-1\right\}$ into itself. Also, by the definition of the semigroup $\mathscr{P\!O}\!_{\infty}(\mathbb{N}^2_{\leqslant})$ the partial bijection $\alpha$ maps the set $\left\{(i,1), \ldots, (i,i)\right\}$ into $S_i$ as well. Then our construction implies that
\begin{equation*}
\left|S_i\setminus\operatorname{dom}\alpha\right|=\left|\mathbb{N}^2\setminus\operatorname{dom}\alpha\right|=\textsf{N}_{\operatorname{dom}\alpha}-1 \qquad \hbox{and} \qquad \left|\left\{(i,1), \ldots, (i,i)\right\}\right|\geq \textsf{N}_{\operatorname{dom}\alpha},
\end{equation*}
a contradiction. In the case when $j\leq i$ we get a contradiction in a similar way. This completes the proof of existence of such a positive integer $n_{\alpha}$ for any $\alpha\in\mathscr{P\!O}\!_{\infty}(\mathbb{N}^2_{\leqslant})$. The existence of such minimal positive integer $n_{\alpha}$ follows from the fact that the set of all positive integers with the usual order $\leq$ is well-ordered.

$(2)$ If $(\textsf{H}^1_{\operatorname{dom}\alpha})\alpha\subseteq \textsf{V}^1$ then $(\textsf{H}^1_{\operatorname{dom}\alpha})\alpha\varpi\subseteq \textsf{H}^1$, and hence $(1)$ and the equality $\alpha\varpi\varpi=\alpha$ imply our assertion.
\end{proof}

Theorem~\ref{theorem-2.7} implies the following corollary:

\begin{corollary}\label{corollary-2.8}
$\left|\mathbb{N}^2\setminus\operatorname{ran}\alpha\right|\leq\left|\mathbb{N}^2\setminus\operatorname{dom}\alpha\right|$ for an arbitrary $\alpha\in\mathscr{P\!O}\!_{\infty}(\mathbb{N}^2_{\leqslant})$.
\end{corollary}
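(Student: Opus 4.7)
The plan is to reduce the claim to case~(1) of Proposition~\ref{proposition-2.5} by composing with the involution $\varpi$, and then to compare the two finite complements by restricting $\alpha$ to a sufficiently large $n\times n$ square and invoking injectivity of $\alpha$ together with Theorem~\ref{theorem-2.6}$(1)$.

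First I would set $\beta=\alpha\varpi$. Because $\varpi\varpi=\mathbb{I}$ and $\varpi$ is a bijection of $\mathbb{N}^2$, we have $\operatorname{dom}\beta=\operatorname{dom}\alpha$ and $\operatorname{ran}\beta=(\operatorname{ran}\alpha)\varpi$, so in particular $|\mathbb{N}^2\setminus\operatorname{ran}\beta|=|\mathbb{N}^2\setminus\operatorname{ran}\alpha|$. If $\alpha$ falls under case~(2) of Proposition~\ref{proposition-2.5}, that is $(\textsf{H}^1_{\operatorname{dom}\alpha})\alpha\subseteq\textsf{V}^1$, then $(\textsf{H}^1_{\operatorname{dom}\beta})\beta=((\textsf{H}^1_{\operatorname{dom}\alpha})\alpha)\varpi\subseteq(\textsf{V}^1)\varpi=\textsf{H}^1$, so $\beta$ is in case~(1). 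Hence I may assume without loss of generality that $(\textsf{H}^1_{\operatorname{dom}\alpha})\alpha\subseteq\textsf{H}^1$.

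For each positive integer $n$ I would set $S_n=\{(i,j)\in\mathbb{N}^2:1\leq i\leq n,\;1\leq j\leq n\}$, a finite square of cardinality $n^2$. By Theorem~\ref{theorem-2.6}$(1)$ applied to both the horizontal and vertical filtrations of $\mathbb{N}^2$, $(\textsf{H}^1_{\operatorname{dom}\alpha}\cup\cdots\cup\textsf{H}^n_{\operatorname{dom}\alpha})\alpha\subseteq\textsf{H}^1\cup\cdots\cup\textsf{H}^n$ and $(\textsf{V}^1_{\operatorname{dom}\alpha}\cup\cdots\cup\textsf{V}^n_{\operatorname{dom}\alpha})\alpha\subseteq\textsf{V}^1\cup\cdots\cup\textsf{V}^n$; intersecting these two inclusions yields $(S_n\cap\operatorname{dom}\alpha)\alpha\subseteq S_n\cap\operatorname{ran}\alpha$, and the injectivity of $\alpha$ then forces $|S_n\cap\operatorname{dom}\alpha|\leq|S_n\cap\operatorname{ran}\alpha|$. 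Choosing $n$ large enough that the finite sets $\mathbb{N}^2\setminus\operatorname{dom}\alpha$ and $\mathbb{N}^2\setminus\operatorname{ran}\alpha$ both lie inside $S_n$, these two cardinalities become $n^2-|\mathbb{N}^2\setminus\operatorname{dom}\alpha|$ and $n^2-|\mathbb{N}^2\setminus\operatorname{ran}\alpha|$ respectively, and cancelling $n^2$ delivers the desired inequality. The main (and essentially only) obstacle is the containment $(S_n\cap\operatorname{dom}\alpha)\alpha\subseteq S_n$; this is precisely what Theorem~\ref{theorem-2.6}$(1)$ provides, after which the rest is a routine counting argument performed inside an arbitrarily large finite window.
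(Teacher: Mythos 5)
Your proof is correct and takes essentially the approach the paper intends: the paper presents the corollary as an immediate consequence of Theorem~\ref{theorem-2.7}, whose part $(1)(i_1)$ gives $(i,j)\alpha\leqslant(i,j)$ and hence the same key containment $(S_n\cap\operatorname{dom}\alpha)\alpha\subseteq S_n$ of a large square into itself, after which the argument is exactly your injectivity-plus-counting step in a finite window. Your substitution of Theorem~\ref{theorem-2.6}$(1)$ for Theorem~\ref{theorem-2.7}, and the preliminary reduction via $\varpi$ to the case $(\textsf{H}^1_{\operatorname{dom}\alpha})\alpha\subseteq\textsf{H}^1$, are both sound and change nothing essential.
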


For an arbitrary non-empty subset $A$ of $\mathbb{N}\times\mathbb{N}$ and any element $(i,j)\in\mathbb{N}\times\mathbb{N}$ we denote $\overline{A}=\left\{(i,j)\in\mathbb{N}\times\mathbb{N}\colon (j,i)\in A\right\}$ and $\overline{(i,j)}=(j,i)$.

\begin{proposition}\label{proposition-2.9}
Let $\alpha$ be an arbitrary element of the semigroup $\mathscr{P\!O}\!_{\infty}(\mathbb{N}^2_{\leqslant})$. Then the following as\-ser\-tions hold:
\begin{itemize}
  \item[$(i)$] $\operatorname{dom}(\varpi\alpha)=\operatorname{dom}(\varpi\alpha\varpi)=\overline{\operatorname{dom}\alpha}$ and $\operatorname{dom}(\alpha\varpi)=\operatorname{dom}\alpha$;
  \item[$(ii)$] $\operatorname{ran}(\varpi\alpha)=\operatorname{ran}\alpha$ and $\operatorname{ran}(\varpi\alpha\varpi)=\operatorname{ran}(\alpha\varpi)=\overline{\operatorname{ran}\alpha}$;
  \item[$(iii)$] $\alpha$ is an idempotent if and only if so is $\varpi\alpha\varpi$.
\end{itemize}
\end{proposition}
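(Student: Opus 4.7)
The plan is to exploit two simple facts about $\varpi$: it is a \emph{total} injection on $\mathbb{N}^2$ (so $\operatorname{dom}\varpi=\operatorname{ran}\varpi=\mathbb{N}^2$), and $\varpi\varpi=\mathbb{I}$ as already observed in the excerpt. Moreover, by the very definition of the bar operation, $\overline{A}=(A)\varpi$ for every $A\subseteq\mathbb{N}\times\mathbb{N}$. Everything else is bookkeeping via the standard composition rules for partial bijections,
\begin{equation*}
\operatorname{dom}(\beta\gamma)=\{x\in\operatorname{dom}\beta\colon (x)\beta\in\operatorname{dom}\gamma\},\qquad \operatorname{ran}(\beta\gamma)=(\operatorname{dom}(\beta\gamma))\beta\gamma,
\end{equation*}
combined with the fact that intersecting with $\operatorname{dom}\varpi=\mathbb{N}^2$ imposes no restriction.

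For assertion $(i)$, I would compute $\operatorname{dom}(\varpi\alpha)=\{(i,j)\in\mathbb{N}^2\colon(i,j)\varpi=(j,i)\in\operatorname{dom}\alpha\}=\overline{\operatorname{dom}\alpha}$ directly from the definition. Since $\operatorname{dom}\varpi=\mathbb{N}^2$, the additional composition with $\varpi$ on the right in $\varpi\alpha\varpi$ imposes no new constraint, so $\operatorname{dom}(\varpi\alpha\varpi)=\operatorname{dom}(\varpi\alpha)=\overline{\operatorname{dom}\alpha}$; and the same totality of $\varpi$ yields $\operatorname{dom}(\alpha\varpi)=\operatorname{dom}\alpha$.

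Assertion $(ii)$ follows by applying $\operatorname{ran}(\beta\gamma)=(\operatorname{dom}(\beta\gamma))\beta\gamma$. Using $(i)$, we get $\operatorname{ran}(\varpi\alpha)=(\overline{\operatorname{dom}\alpha})\varpi\alpha=(\operatorname{dom}\alpha)\alpha=\operatorname{ran}\alpha$. Since $\varpi$ is total and bijective, composing on the right by $\varpi$ simply swaps coordinates in the range, so $\operatorname{ran}(\alpha\varpi)=(\operatorname{ran}\alpha)\varpi=\overline{\operatorname{ran}\alpha}$ and likewise $\operatorname{ran}(\varpi\alpha\varpi)=(\operatorname{ran}(\varpi\alpha))\varpi=(\operatorname{ran}\alpha)\varpi=\overline{\operatorname{ran}\alpha}$.

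Assertion $(iii)$ is purely formal: by associativity of semigroup multiplication and $\varpi\varpi=\mathbb{I}$,
\begin{equation*}
(\varpi\alpha\varpi)(\varpi\alpha\varpi)=\varpi\alpha(\varpi\varpi)\alpha\varpi=\varpi\alpha^{2}\varpi,
\end{equation*}
so $\varpi\alpha\varpi$ is idempotent if and only if $\varpi\alpha^{2}\varpi=\varpi\alpha\varpi$; multiplying this equality on both sides by $\varpi$ and again using $\varpi\varpi=\mathbb{I}$ shows it is equivalent to $\alpha^{2}=\alpha$. There is no genuine obstacle here; the only point where one has to be careful is keeping track of the domain condition in $(i)$, where one must use the totality of $\varpi$ to conclude that the composition with $\varpi$ on the right does not shrink the domain.
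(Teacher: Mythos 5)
Your proof is correct. Parts $(i)$ and $(ii)$ are exactly what the paper intends when it says these items ``follow from the definition of the composition of partial maps''; your computations with $\operatorname{dom}(\beta\gamma)=\{x\in\operatorname{dom}\beta\colon (x)\beta\in\operatorname{dom}\gamma\}$ and the totality of $\varpi$ are just the details written out, and they match the paper's right-action convention. The only real divergence is in $(iii)$: the paper argues pointwise, using the fact that an idempotent partial bijection is the identity on its domain, to verify directly that $(j,i)\varpi\alpha\varpi=(j,i)$ for all $(j,i)\in\overline{\operatorname{dom}\alpha}$, and then gets the converse from $\varpi\varpi=\mathbb{I}$. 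You instead use the purely formal conjugation identity $(\varpi\alpha\varpi)^{2}=\varpi\alpha^{2}\varpi$ together with $\varpi\varpi=\mathbb{I}$, which handles both directions simultaneously and works in any monoid for conjugation by an involutive unit; it also avoids invoking the structure of idempotents in $\mathscr{I}_\omega$ (which the paper only formalizes later, in Proposition~3.3). Your route is, if anything, slightly cleaner and more self-contained; both are valid.
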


\begin{proof}
Items $(i)$ and $(ii)$ follow from the definition of the composition of partial maps.

$(iii)$ Suppose that $\alpha$ is an idempotent of the semigroup $\mathscr{P\!O}\!_{\infty}(\mathbb{N}^2_{\leqslant})$. By items $(i)$ and $(ii)$ we have that $\operatorname{dom}(\varpi\alpha\varpi)=\overline{\operatorname{dom}\alpha}=\overline{\operatorname{ran}\alpha}=\operatorname{ran}(\varpi\alpha\varpi)$. Then
$(j,i)\varpi\alpha\varpi=(i,j)\alpha\varpi=(i,j)\varpi=(j,i)$ for an arbitrary $(i,j)\in\operatorname{dom}\alpha$, and hence $\varpi\alpha\varpi\in E(\mathscr{P\!O}\!_{\infty}(\mathbb{N}^2_{\leqslant}))$. The converse statement follows from the equality $\varpi\varpi=\mathbb{I}$.
\end{proof}

The following statement follows from the definition of the semigroup $\mathscr{P\!O}\!_{\infty}(\mathbb{N}^2_{\leqslant})$ and Lemma~\ref{lemma-2.4}.

\begin{proposition}\label{proposition-2.10}
Let $\alpha$ and $\beta$ be arbitrary elements of the semigroup $\mathscr{P\!O}\!_{\infty}(\mathbb{N}^2_{\leqslant})$. Then  \emph{$(\textsf{H}^{1}_{\operatorname{dom}(\alpha\beta)})\alpha\beta\subseteq \textsf{H}^1$} if and only if \emph{$(\textsf{H}^{1}_{\operatorname{dom}(\beta\alpha)})\beta\alpha\subseteq \textsf{H}^1$}.
\end{proposition}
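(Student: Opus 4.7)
The plan is to attach to each $\gamma \in \mathscr{P\!O}\!_{\infty}(\mathbb{N}^2_{\leqslant})$ a sign $\epsilon(\gamma)\in\mathbb{Z}_2$ recording which of the two alternatives of Lemma~\ref{lemma-2.4} it realises, then to verify that $\epsilon$ is a semigroup homomorphism into $\mathbb{Z}_2$. Since $\mathbb{Z}_2$ is abelian, the conclusion $\epsilon(\alpha\beta)=\epsilon(\beta\alpha)$ follows immediately, and this is exactly the biconditional in the statement.

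Concretely, set $\epsilon(\gamma)=H$ when $(\textsf{H}^{1}_{\operatorname{dom}\gamma})\gamma\subseteq\textsf{H}^1$, and $\epsilon(\gamma)=V$ when $(\textsf{H}^{1}_{\operatorname{dom}\gamma})\gamma\subseteq\textsf{V}^1$. By Lemma~\ref{lemma-2.4} one of these alternatives holds, and they are mutually exclusive because $\textsf{H}^{1}_{\operatorname{dom}\gamma}$ is infinite (Remark~\ref{remark-2.1}) while $\textsf{H}^1\cap\textsf{V}^1=\{(1,1)\}$ and $\gamma$ is injective. Proposition~\ref{proposition-2.5} then gives the parallel statement on vertical lines: $\epsilon(\gamma)=H$ forces $(\textsf{V}^{1}_{\operatorname{dom}\gamma})\gamma\subseteq\textsf{V}^1$, while $\epsilon(\gamma)=V$ forces $(\textsf{V}^{1}_{\operatorname{dom}\gamma})\gamma\subseteq\textsf{H}^1$.

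To verify the multiplication rule $\epsilon(\gamma\delta)=\epsilon(\gamma)\epsilon(\delta)$ (with $H$ as the identity and $V\cdot V=H$), I would first observe that $\operatorname{dom}(\gamma\delta)$ is cofinite in $\mathbb{N}^2$: its complement is contained in the union of the finite set $\mathbb{N}^2\setminus\operatorname{dom}\gamma$ and the $\gamma$-preimage of the finite set $\mathbb{N}^2\setminus\operatorname{dom}\delta$, which is finite by injectivity of $\gamma$. Hence $\textsf{H}^{1}_{\operatorname{dom}(\gamma\delta)}$ is infinite and contains some $(k,1)$. Tracking this point in each of the four combinations of $(\epsilon(\gamma),\epsilon(\delta))$ using the two paragraphs above, one checks that $(k,1)\gamma\delta\in\textsf{H}^1$ when $\epsilon(\gamma)=\epsilon(\delta)$ and $(k,1)\gamma\delta\in\textsf{V}^1$ when $\epsilon(\gamma)\neq\epsilon(\delta)$. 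Since Lemma~\ref{lemma-2.4} applied to $\gamma\delta$ already constrains the whole image $(\textsf{H}^{1}_{\operatorname{dom}(\gamma\delta)})\gamma\delta$ to a single alternative, this one witness determines $\epsilon(\gamma\delta)$ and proves the rule.

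Combining these, $\epsilon(\alpha\beta)=\epsilon(\alpha)\epsilon(\beta)=\epsilon(\beta)\epsilon(\alpha)=\epsilon(\beta\alpha)$, which is exactly the assertion. The only mildly delicate point is the composition rule: when the first factor carries $(k,1)$ into $\textsf{V}^1$ rather than into $\textsf{H}^1$, Lemma~\ref{lemma-2.4} alone does not say what the second factor does there, so Proposition~\ref{proposition-2.5} is genuinely needed in the cases $\epsilon(\gamma)=V$. Everything else is a straightforward four-way case analysis.
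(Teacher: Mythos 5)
Your argument is correct, and since the paper offers no written proof here beyond the remark that the statement ``follows from the definition of the semigroup and Lemma~\ref{lemma-2.4}'', your sign map $\epsilon$ into $\mathbb{Z}_2$ --- well defined by Lemma~\ref{lemma-2.4} and Remark~\ref{remark-2.1}, multiplicative by the four-way case analysis in which Proposition~\ref{proposition-2.5} handles the cases $\epsilon(\gamma)=V$ --- is precisely the intended argument, cleanly packaged so that commutativity of $\mathbb{Z}_2$ yields the biconditional. The only cosmetic point is that your single witness $(k,1)$ should be chosen with $(k,1)\gamma\delta\neq(1,1)$ so that the dichotomy of Lemma~\ref{lemma-2.4} pins down the alternative (recall $\textsf{H}^1\cap\textsf{V}^1=\{(1,1)\}$); this is harmless, since your tracking in fact works pointwise for every element of $\textsf{H}^1_{\operatorname{dom}(\gamma\delta)}$.
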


\section{Algebraic properties of the semigroup
$\mathscr{P\!O}\!_{\infty}(\mathbb{N}^2_{\leqslant})$}\label{section-3}

Theorems~\ref{theorem-2.6} and~\ref{theorem-2.7} imply the following

\begin{proposition}\label{proposition-3.1}
The group of units $H(\mathbb{I})$ of the semigroup
$\mathscr{P\!O}\!_{\infty}(\mathbb{N}^2_{\leqslant})$ is isomorphic to $\mathbb{Z}_2$.
\end{proposition}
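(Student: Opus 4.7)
The plan is to show that $H(\mathbb{I})=\{\mathbb{I},\varpi\}$; together with $\varpi\varpi=\mathbb{I}$, this immediately gives $H(\mathbb{I})\cong\mathbb{Z}_2$. First I would observe that any unit $\alpha$ of $\mathscr{P\!O}\!_{\infty}(\mathbb{N}^2_{\leqslant})$ must be a total bijection of $\mathbb{N}^2$, since $\alpha\alpha^{-1}=\mathbb{I}$ forces $\operatorname{dom}\alpha=\mathbb{N}^2$ and similarly $\operatorname{ran}\alpha=\mathbb{N}^2$. In particular $\textsf{H}^1_{\operatorname{dom}\alpha}=\textsf{H}^1$, so Lemma~\ref{lemma-2.4} splits the analysis into two cases: either $(\textsf{H}^{1})\alpha\subseteq\textsf{H}^1$ or $(\textsf{H}^{1})\alpha\subseteq\textsf{V}^1$.

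In the first case, Theorem~\ref{theorem-2.7}(1) gives the pointwise inequality $(i,j)\alpha\leqslant(i,j)$ on all of $\mathbb{N}^2$. I would then argue that $\alpha=\mathbb{I}$ by a well-foundedness argument: if some $(i,j)$ satisfies $(i,j)\alpha=(k,l)<(i,j)$, then injectivity of $\alpha$ and the inequality $(k,l)\alpha\leqslant(k,l)$ force $(k,l)\alpha<(k,l)$ strictly (otherwise $\alpha$ would fix $(k,l)$ and simultaneously send $(i,j)\neq(k,l)$ to $(k,l)$). Iterating produces an infinite strictly decreasing sequence $(i,j)>(i,j)\alpha>(i,j)\alpha^2>\cdots$ in $\mathbb{N}^2_{\leqslant}$; but the product order on $\mathbb{N}^2$ is well-founded because each element $(p,q)$ dominates only the finitely many points in $\{1,\dots,p\}\times\{1,\dots,q\}$. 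This contradiction yields $\alpha=\mathbb{I}$.

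For the second case, I would reduce to the first case by composing with $\varpi$: since $\varpi$ swaps $\textsf{H}^1$ and $\textsf{V}^1$ and is itself a unit (as $\varpi\varpi=\mathbb{I}$), the product $\alpha\varpi$ is again a unit and satisfies $(\textsf{H}^{1})\alpha\varpi\subseteq\textsf{V}^1\varpi=\textsf{H}^1$. The first case then forces $\alpha\varpi=\mathbb{I}$, i.e.\ $\alpha=\varpi$. The main obstacle is the well-foundedness step in the first case; once that is clear, the rest is a symmetric reduction. (Alternatively one could invoke Theorem~\ref{theorem-2.7}(1)(ii), which produces an $n_\alpha$ beyond which $\alpha$ is the identity, and argue by a counting/pigeonhole contradiction on the finite complement, but the descending-chain argument is more direct.)
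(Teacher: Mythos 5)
Your proof is correct and follows the route the paper intends: the paper gives no details beyond the bare citation of Theorems~\ref{theorem-2.6} and~\ref{theorem-2.7}, and your argument is exactly a fleshed-out version of that deduction (a unit is a total bijection, Lemma~\ref{lemma-2.4} and Theorem~\ref{theorem-2.7}(1) give $(i,j)\alpha\leqslant(i,j)$ in the first case, and the finiteness of ${\downarrow}(i,j)$ kills any strict descent, so $\alpha=\mathbb{I}$; the second case reduces to the first via $\varpi$). The only cosmetic difference is that the paper would most naturally close the first case by applying Theorem~\ref{theorem-2.7}(1) to the unit $\alpha^{-1}$ as well and squeezing $(i,j)=((i,j)\alpha)\alpha^{-1}\leqslant(i,j)\alpha\leqslant(i,j)$; your well-founded descent is an equally valid substitute.
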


\begin{proposition}\label{proposition-3.2}
Let $\alpha$ be an element of the semigroup $\mathscr{P\!O}\!_{\infty}(\mathbb{N}^2_{\leqslant})$. Then $\alpha\in H(\mathbb{I})$ if and only if $\operatorname{dom}\alpha=\mathbb{N}^2$.
\end{proposition}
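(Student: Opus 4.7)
The plan is to prove the two implications separately, with the reverse direction (units being total) being the substantive one.

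For the easy direction, suppose $\alpha \in H(\mathbb{I})$. Then there exists $\beta \in \mathscr{P\!O}\!_{\infty}(\mathbb{N}^2_{\leqslant})$ with $\alpha\beta = \mathbb{I}$. Since $\operatorname{dom}(\alpha\beta) \subseteq \operatorname{dom}\alpha$ and $\operatorname{dom}\mathbb{I} = \mathbb{N}^2$, it follows immediately that $\operatorname{dom}\alpha = \mathbb{N}^2$.

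For the converse, assume $\operatorname{dom}\alpha = \mathbb{N}^2$. First apply Corollary~\ref{corollary-2.8} to get $\operatorname{ran}\alpha = \mathbb{N}^2$, so $\alpha$ is a bijection of $\mathbb{N}^2$ onto itself. By Lemma~\ref{lemma-2.4}, either $(\textsf{H}^{1}_{\operatorname{dom}\alpha})\alpha\subseteq \textsf{H}^1$ or $(\textsf{H}^{1}_{\operatorname{dom}\alpha})\alpha\subseteq \textsf{V}^1$, and in view of Proposition~\ref{proposition-3.1} it suffices to show $\alpha = \mathbb{I}$ in the first case and $\alpha = \varpi$ in the second. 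I will handle the first case and reduce the second to it via the involution $\varpi$: if $(\textsf{H}^{1}_{\operatorname{dom}\alpha})\alpha\subseteq \textsf{V}^1$, then $\alpha\varpi$ has full domain $\mathbb{N}^2$ and satisfies $(\textsf{H}^{1}_{\operatorname{dom}(\alpha\varpi)})(\alpha\varpi)\subseteq \textsf{H}^1$; by the first case $\alpha\varpi = \mathbb{I}$, hence $\alpha = \varpi$.

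So the main step is: if $\operatorname{dom}\alpha = \operatorname{ran}\alpha = \mathbb{N}^2$ and $(\textsf{H}^{1}_{\operatorname{dom}\alpha})\alpha\subseteq \textsf{H}^1$, then $\alpha = \mathbb{I}$. By Theorem~\ref{theorem-2.7}$(1)(i_1)$, $(i,j)\alpha \leqslant (i,j)$ for every $(i,j) \in \mathbb{N}^2$. I argue by induction on $i+j$: for $(1,1)$, the inequality $(1,1)\alpha \leqslant (1,1)$ forces $(1,1)\alpha = (1,1)$; assuming $(i',j')\alpha = (i',j')$ for all $(i',j')$ with $i'+j' < i+j$, if $(i,j)\alpha = (k,l) \neq (i,j)$ then $k \leq i$, $l \leq j$ with strict inequality somewhere, giving $k+l < i+j$, so $(k,l)\alpha = (k,l)$ by induction; but then $(i,j)\alpha = (k,l) = (k,l)\alpha$ contradicts injectivity of $\alpha$. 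Hence $(i,j)\alpha = (i,j)$ for all $(i,j)$, i.e.\ $\alpha = \mathbb{I}$.

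The main obstacle is essentially bookkeeping: making sure that the inductive argument correctly leverages $(i,j)\alpha \leqslant (i,j)$ together with the totality of $\alpha$ (via Corollary~\ref{corollary-2.8}) and that the symmetric case is reduced cleanly via $\varpi$. The deep structural work has already been done in Theorems~\ref{theorem-2.6} and~\ref{theorem-2.7}, so the proof of this proposition is a short, direct application.
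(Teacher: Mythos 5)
Your proof is correct and follows essentially the same route as the paper, which simply notes that the forward implication is trivial and that the converse follows from Theorems~\ref{theorem-2.6}, \ref{theorem-2.7} and Corollary~\ref{corollary-2.8}; your induction on $i+j$ combined with injectivity is a clean way to fill in the step the paper leaves implicit. (The appeal to Proposition~\ref{proposition-3.1} is unnecessary --- it is enough to note that $\mathbb{I}$ and $\varpi$ are units --- but this is harmless.)
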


\begin{proof}
The implication $(\Rightarrow)$ is trivial. The implication $(\Leftarrow)$ follows from Theorems~\ref{theorem-2.6}, \ref{theorem-2.7} and Corollary~\ref{corollary-2.8}.
\end{proof}

\begin{proposition}\label{proposition-3.3}
An element $\alpha$ of $\mathscr{P\!O}\!_{\infty}(\mathbb{N}^2_{\leqslant})$ is an idempotent if and only if $\alpha$ is an identity partial self-map of $\mathbb{N}^2_{\leqslant}$ with the cofinite domain.
\end{proposition}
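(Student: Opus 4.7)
The plan is to prove the two implications separately. The backward direction is immediate: an identity partial self-map on any cofinite subset $D \subseteq \mathbb{N}^2$ is trivially monotone and satisfies $\alpha\alpha = \alpha$, hence it belongs to $E(\mathscr{P\!O}\!_{\infty}(\mathbb{N}^2_{\leqslant}))$. The substantive content is the forward direction, and my strategy there is to exploit the dichotomy supplied by Theorem~\ref{theorem-2.7}, combined with the injectivity of $\alpha$.

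For the forward direction, let $\alpha \in E(\mathscr{P\!O}\!_{\infty}(\mathbb{N}^2_{\leqslant}))$. By Theorem~\ref{theorem-2.7} exactly one of two cases holds: either (1) $(\textsf{H}^1_{\operatorname{dom}\alpha})\alpha \subseteq \textsf{H}^1$ and $(i,j)\alpha \leqslant (i,j)$ for every $(i,j) \in \operatorname{dom}\alpha$, or (2) $(\textsf{H}^1_{\operatorname{dom}\alpha})\alpha \subseteq \textsf{V}^1$ and there exists $n_\alpha$ with $(i,j)\alpha = (j,i)$ for every $(i,j) \in \operatorname{dom}\alpha \cap {\uparrow}(n_\alpha, n_\alpha)$. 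To rule out case (2), I would choose an off-diagonal point $(i,j) \in \operatorname{dom}\alpha \cap {\uparrow}(n_\alpha, n_\alpha)$ with $i \neq j$; such points exist in abundance because $\operatorname{dom}\alpha$ is cofinite and the off-diagonal part of ${\uparrow}(n_\alpha, n_\alpha)$ is infinite. Then $(i,j)\alpha = (j,i)$, and since $\alpha^2 = \alpha$ is defined at $(i,j)$, we must have $(j,i) \in \operatorname{dom}\alpha \cap {\uparrow}(n_\alpha, n_\alpha)$. Applying Theorem~\ref{theorem-2.7}(2)(ii_2) once more gives $(j,i)\alpha = (i,j)$, whence $(i,j)\alpha^2 = (i,j) \neq (j,i) = (i,j)\alpha$, contradicting $\alpha^2 = \alpha$.

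Thus case (1) must hold. Suppose for contradiction that some $(i,j) \in \operatorname{dom}\alpha$ satisfies $(i,j)\alpha = (k,l) \neq (i,j)$. Idempotency then forces $(k,l) = (i,j)\alpha \in \operatorname{dom}\alpha$ and $(k,l)\alpha = (i,j)\alpha^2 = (i,j)\alpha = (k,l)$, so the two distinct points $(i,j)$ and $(k,l)$ share the image $(k,l)$ under $\alpha$, violating injectivity. Therefore $(i,j)\alpha = (i,j)$ for every $(i,j) \in \operatorname{dom}\alpha$, and $\operatorname{dom}\alpha$ is cofinite by the very definition of $\mathscr{P\!O}\!_{\infty}(\mathbb{N}^2_{\leqslant})$.

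The main obstacle is the treatment of case (2): one must observe that on any point away from the diagonal the swap $\varpi$ fails to be idempotent, and then guarantee the existence of such off-diagonal points in $\operatorname{dom}\alpha \cap {\uparrow}(n_\alpha, n_\alpha)$. Once case (2) is excluded, case (1) falls to a one-line injectivity argument.
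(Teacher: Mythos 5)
Your proof is correct, but it takes a detour the paper does not. The paper's argument is entirely order-free: from $\alpha\alpha=\alpha$ it extracts $\operatorname{dom}\alpha=\operatorname{ran}\alpha$ and then, for $x\in\operatorname{dom}\alpha$ with $(x)\alpha=y$, compares the full preimage of $y$ computed two ways ($y$ from $(y)\alpha=y$, and $x$ from $(x)\alpha=y$) to conclude $x=y$ by injectivity. That is exactly the injectivity argument you use to finish your case (1), and it is the genuine content of the proposition; as the paper notes in Remark~\ref{remark-3.4}, it works in any semigroup of partial bijections. What you do differently is to first invoke the dichotomy of Theorem~\ref{theorem-2.7} and separately exclude the ``swap'' case (2) by locating an off-diagonal point of $\operatorname{dom}\alpha\cap{\uparrow}(n_\alpha,n_\alpha)$ and deriving $(i,j)\alpha^2=(i,j)\neq(j,i)=(i,j)\alpha$. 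That exclusion is valid (cofiniteness does supply such points, and $(j,i)$ does land back in ${\uparrow}(n_\alpha,n_\alpha)$), but it is redundant: your case (1) argument uses only idempotency and injectivity and applies verbatim without any case split, so Theorem~\ref{theorem-2.7} is not needed at all. The paper's route buys generality and brevity; yours buys nothing extra, though it loses nothing in correctness.
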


\begin{proof}
The implication $(\Leftarrow)$ is trivial.

$(\Rightarrow)$ Let an element $\alpha$ be an idempotent of the semigroup $\mathscr{P\!O}\!_{\infty}(\mathbb{N}^2_{\leqslant})$. Then for every $x\in\operatorname{dom}\alpha$ we have that $(x)\alpha\alpha=(x)\alpha$ and hence we get that $\operatorname{dom}\alpha^2=\operatorname{dom}\alpha$ and $\operatorname{ran}\alpha^2=\operatorname{ran}\alpha$. Also since $\alpha$ is a partial bijective self-map of $\mathbb{N}^2_{\leqslant}$ we conclude that the previous equalities imply that $\operatorname{dom}\alpha=\operatorname{ran}\alpha$. Fix an arbitrary $x\in\operatorname{dom}\alpha$ and suppose that $(x)\alpha=y$. Then $(x)\alpha=(x)\alpha\alpha=(y)\alpha=y$. Since $\alpha$ is a partial bijective self-map of $\mathbb{N}^2_{\leqslant}$ we have that the equality $(y)\alpha=y$ implies that the full preimage of $y$ under the partial map $\alpha$ is equal to $y$. Similarly the equality $(x)\alpha=y$ implies that the full preimage of $y$ under the partial map $\alpha$ is equal to $x$. Thus we get that $x=y$ and our implication holds.
\end{proof}

\begin{remark}\label{remark-3.4}
The proof of Proposition~\ref{proposition-3.3} implies that the statement of the proposition holds for any semigroup of partial bijections, but in the general case of a semigroup of transformations this statement is not true.
\end{remark}

The following theorem describes the subset of idempotents of the semigroup $\mathscr{P\!O}\!_{\infty}(\mathbb{N}^2_{\leqslant})$.

\begin{theorem}\label{theorem-3.5-1}
For an element $\alpha$ of the semigroup $\mathscr{P\!O}\!_{\infty}(\mathbb{N}^2_{\leqslant})$ the following conditions are equivalent:
\begin{itemize}
  \item[$(i)$] $\alpha$ is an idempotent of $\mathscr{P\!O}\!_{\infty}(\mathbb{N}^2_{\leqslant})$;
  \item[$(ii)$] $\operatorname{dom}\alpha=\operatorname{ran}\alpha$ and there exists a positive integer $n>1$ such that $(n,1)\in \operatorname{dom}\alpha$ and $(n,1)\alpha\in\emph{\textsf{H}}^1$;
  \item[$(iii)$] $\operatorname{dom}\alpha=\operatorname{ran}\alpha$ and there exists a positive integer $m>1$ such that $(1,m)\in \operatorname{dom}\alpha$ and $(1,m)\alpha\in\emph{\textsf{V}}^1$.
\end{itemize}
\end{theorem}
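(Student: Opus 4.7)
My plan is to reduce the theorem to a single nontrivial implication, namely $(ii) \Rightarrow (i)$. The implications $(i) \Rightarrow (ii)$ and $(i) \Rightarrow (iii)$ follow immediately from Proposition~\ref{proposition-3.3}: if $\alpha$ is idempotent, then $\alpha$ is an identity partial self-map with cofinite domain, so $\operatorname{dom}\alpha = \operatorname{ran}\alpha$ automatically, and because $\mathbb{N}^2 \setminus \operatorname{dom}\alpha$ is finite there is some $(n,1)$ with $n > 1$ lying in $\operatorname{dom}\alpha$ and satisfying $(n,1)\alpha = (n,1) \in \textsf{H}^1$, and symmetrically for $(iii)$. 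For the equivalence $(ii) \Leftrightarrow (iii)$ I would observe that the existence of a single $(n,1) \in \operatorname{dom}\alpha$ with $(n,1)\alpha \in \textsf{H}^1$ forces $(\textsf{H}^{1}_{\operatorname{dom}\alpha})\alpha \subseteq \textsf{H}^1$ via Lemma~\ref{lemma-2.4}, which by Proposition~\ref{proposition-2.5}(1) is equivalent to $(\textsf{V}^{1}_{\operatorname{dom}\alpha})\alpha \subseteq \textsf{V}^1$, and the latter yields condition $(iii)$ once we pick any $(1,m) \in \operatorname{dom}\alpha$ with $m > 1$.

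The heart of the proof is $(ii) \Rightarrow (i)$. Assuming $(ii)$, the argument above gives $(\textsf{H}^{1}_{\operatorname{dom}\alpha})\alpha \subseteq \textsf{H}^1$, whence Theorem~\ref{theorem-2.7}(1) yields $(i,j)\alpha \leqslant (i,j)$ for every $(i,j) \in \operatorname{dom}\alpha$. The hypothesis $\operatorname{dom}\alpha = \operatorname{ran}\alpha$ is exactly what lets me iterate $\alpha$ without leaving its domain: setting $x_0 = (i,j)$ and $x_{k+1} = (x_k)\alpha$, the inclusion $\operatorname{ran}\alpha \subseteq \operatorname{dom}\alpha$ keeps all $x_k$ in $\operatorname{dom}\alpha$, while the descent $x_0 \geqslant x_1 \geqslant x_2 \geqslant \cdots$ in $\mathbb{N}^2_{\leqslant}$ must stabilize since both coordinate sequences are weakly decreasing sequences of positive integers. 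Thus some $x_K = x_{K+1} =: y$ satisfies $(y)\alpha = y$.

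To close, I would propagate this fixed point back along the orbit by injectivity of $\alpha$: from $(x_{K-1})\alpha = x_K = y = (y)\alpha$ with $x_{K-1}, y \in \operatorname{dom}\alpha$, one-to-oneness forces $x_{K-1} = y$, and an easy downward induction gives $x_0 = y$, so $(i,j)\alpha = (i,j)$. Since $(i,j)$ was arbitrary in $\operatorname{dom}\alpha$, the element $\alpha$ is the identity on its cofinite domain, and Proposition~\ref{proposition-3.3} identifies this with being an idempotent. The main obstacle I foresee is precisely that Theorem~\ref{theorem-2.7} only guarantees $\alpha$ fixes points \emph{above} the threshold $(n_\alpha, n_\alpha)$; bridging to fixity on \emph{all} of $\operatorname{dom}\alpha$ is where the hypothesis $\operatorname{dom}\alpha = \operatorname{ran}\alpha$ becomes essential, as without it the orbit could exit $\operatorname{dom}\alpha$ and the injectivity-based back-propagation would collapse.
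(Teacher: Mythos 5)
Your proposal is correct, and for the one nontrivial implication it takes a genuinely different route from the paper. The paper proves $(ii)\Rightarrow(i)$ by a double induction: first along $\textsf{H}^{1}_{\operatorname{dom}\alpha}$ (listing its elements in increasing order and using $\textsf{H}^{1}_{\operatorname{dom}\alpha}=\textsf{H}^{1}_{\operatorname{ran}\alpha}$ together with Theorem~\ref{theorem-2.7}$(1)$ and injectivity to force each point to be fixed, starting from the minimum), and then row by row on $\textsf{H}^{k_0}_{\operatorname{dom}\alpha}$, invoking Theorem~\ref{theorem-2.6}$(1)$ at each stage. You replace all of this with a single orbit argument: once Theorem~\ref{theorem-2.7}$(1)$ gives $(i,j)\alpha\leqslant(i,j)$, the hypothesis $\operatorname{dom}\alpha=\operatorname{ran}\alpha$ makes $\alpha$ a decreasing bijection of its domain onto itself, the orbit of any point stabilizes by well-foundedness of $\mathbb{N}^2_{\leqslant}$, and injectivity propagates the resulting fixed point back to the starting point. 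This is shorter, avoids Theorem~\ref{theorem-2.6} entirely, and isolates the general principle (an injective decreasing selfmap of a well-founded poset that is onto its own domain is the identity); the paper's version stays closer to the row-and-column combinatorics it has already set up. One small point you should make explicit: deducing $(\textsf{H}^{1}_{\operatorname{dom}\alpha})\alpha\subseteq\textsf{H}^1$ from the single witness $(n,1)$ via Lemma~\ref{lemma-2.4} has a degenerate case, namely $(n,1)\alpha=(1,1)\in\textsf{H}^1\cap\textsf{V}^1$ with the $\textsf{V}^1$ alternative holding; this is excluded because $\operatorname{dom}\alpha=\operatorname{ran}\alpha$ then puts $(1,1)$ in $\operatorname{dom}\alpha$, where minimality forces $(1,1)\alpha=(1,1)$, contradicting injectivity since $n>1$ (the paper's opening remark about $(1,1)$ serves exactly this purpose).
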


\begin{proof}
Implications $(i)\Rightarrow(ii)$ and $(i)\Rightarrow(iii)$ follow from Proposition~\ref{proposition-3.3}.

We shall prove implication $(ii)\Rightarrow(i)$ by induction in two steps. The proof of implication $(iii)\Rightarrow(i)$ is similar.

First we remark that if $(1,1)\in\operatorname{dom}\alpha$ then since $(1,1)\leqslant(i,j)$  for any $(i,j)\in\operatorname{dom}\alpha$, the definition of the semigroup $\mathscr{P\!O}\!_{\infty}(\mathbb{N}^2_{\leqslant})$ implies that $(1,1)\alpha=(1,1)$.

Now, condition $(ii)$ and Lemma~\ref{lemma-2.4} imply that $(\textsf{H}^{1}_{\operatorname{dom}\alpha})\alpha\subseteq \textsf{H}^1$. Since the set $\textsf{H}^{1}_{\operatorname{dom}\alpha}$ with the induced order from the poset $\mathbb{N}^2_{\leqslant}$ is order isomorphic to the set of all positive integers with the usual linear order, without loss of generality we may assume that $\textsf{H}^{1}_{\operatorname{dom}\alpha}=\left\{x_i^1\colon i=1,2,3,\ldots\right\}$ and $x_i^1\leqslant x_j^1$ in $\textsf{H}^{1}_{\operatorname{dom}\alpha}$ if and only if $i\leq j$. Since $(\textsf{H}^{1}_{\operatorname{dom}\alpha})\alpha\subseteq \textsf{H}^1$, Theorem~\ref{theorem-2.7}$(1)$ implies that $(x_1^1,1)\alpha\leqslant(x_1^1,1)$, and by the equality $\textsf{H}^{1}_{\operatorname{dom}\alpha}=\textsf{H}^{1}_{\operatorname{ran}\alpha}$ we get that $(x_1^1,1)\alpha=(x_1^1,1)$. Suppose that we have shown that $(x_l^1,1)\alpha=(x_l^1,1)$ for every positive integer $l<t_0$, where $t_0$ is some positive integer $\geq 2$. Then the equality $\textsf{H}^{1}_{\operatorname{dom}\alpha}=\textsf{H}^{1}_{\operatorname{ran}\alpha}$ and Theorem~\ref{theorem-2.7}$(1)$ imply that $(x_{t_0}^1,1)\alpha=(x_{t_0}^1,1)$, because $(x_{t_0}^1,1)\alpha\leqslant(x_{t_0}^1,1)$ and $(\textsf{H}^{1}_{\operatorname{dom}\alpha})\alpha\subseteq \textsf{H}^1$. Therefore, we have proved that $(x_{k}^1,1)\alpha=(x_{k}^1,1)$ for every $(x_{k},1)\in\operatorname{dom}\alpha$.

Now, we shall show that the equality $(p,q)\alpha=(p,q)$ for all positive integers $q< k_0$ and all positive integers $p$ such that $(p,q)\in\operatorname{dom}\alpha$, where $k_0$ is some positive integer $\geq 2$, implies that $(p,k_0)\alpha=(p,k_0)$ for all $(p,k_0)\in\operatorname{dom}\alpha$. Since the set $\textsf{H}^{k_0}_{\operatorname{dom}\alpha}$ with the induced order from the poset $\mathbb{N}^2_{\leqslant}$ is order isomorphic to the set of all positive integers with the usual linear order, without loss of generality we may assume that $\textsf{H}^{k_0}_{\operatorname{dom}\alpha}=\left\{x_i^{k_0}\colon i=1,2,3,\ldots\right\}$ and $x_i^{k_0}\leqslant x_j^{k_0}$ in $\textsf{H}^{k_0}_{\operatorname{dom}\alpha}$ if and only if $i\leq j$. Then the assumption of induction and Theorem~\ref{theorem-2.6}$(1)$ imply that $(\textsf{H}^{k_0}_{\operatorname{dom}\alpha})\alpha\subseteq^* \textsf{H}^{k_0}$. Theorem~\ref{theorem-2.7}$(1)$ implies that $(x_1^{k_0},{k_0})\alpha\leqslant(x_1^{k_0},{k_0})$, and by the equality $\textsf{H}^{k_0}_{\operatorname{dom}\alpha}=\textsf{H}^{k_0}_{\operatorname{ran}\alpha}$ we get that $(x_1^{k_0},{k_0})\alpha=(x_1^{k_0},{k_0})$. Suppose that we showed that $(x_l^{k_0},{k_0})\alpha=(x_l^{k_0},k_0)$ for every positive integer $l<s_0$, where $s_0$ is a some positive integer $\geq 2$. Then the equality $\textsf{H}^{k_0}_{\operatorname{dom}\alpha}=\textsf{H}^{k_0}_{\operatorname{ran}\alpha}$ and Theorem~\ref{theorem-2.7}$(1)$ imply that $(x_{s_0}^{k_0},{k_0})\alpha=(x_{s_0}^{k_0},{k_0})$, because $(x_{s_0}^{k_0},{k_0})\alpha\leqslant(x_{s_0}^{k_0},{k_0})$ and $(\textsf{H}^{k_0}_{\operatorname{dom}\alpha})\alpha\subseteq \textsf{H}^{k_0}$. Therefore, we have proved that $(x_{k}^{k_0},{k_0})\alpha=(x_{k}^{k_0},{k_0})$ for every $(x_{k}^{k_0},{k_0})\in\operatorname{dom}\alpha$.

The proof of implication $(ii)\Rightarrow(i)$ is complete.
\end{proof}

Proposition~\ref{proposition-3.3} implies the following proposition.

\begin{proposition}\label{proposition-3.5}
The subset of idempotents $E(\mathscr{P\!O}\!_{\infty}(\mathbb{N}^2_{\leqslant}))$ of the semigroup $\mathscr{P\!O}\!_{\infty}(\mathbb{N}^2_{\leqslant})$ is a commutative submonoid of $\mathscr{P\!O}\!_{\infty}(\mathbb{N}^2_{\leqslant})$ and moreover $E(\mathscr{P\!O}\!_{\infty}(\mathbb{N}^2_{\leqslant}))$ is isomorphic to the free semilattice with unit $\left(\mathscr{P}^*(\mathbb{N}^2),\cup\right)$ over the set $\mathbb{N}^2$ under the mapping $(\varepsilon)\mathfrak{h}=\mathbb{N}^2\setminus\operatorname{dom}\varepsilon$.
\end{proposition}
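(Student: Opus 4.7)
The plan is to leverage Proposition~\ref{proposition-3.3}, which already tells us the precise form of every idempotent: an element $\varepsilon\in E(\mathscr{P\!O}\!_{\infty}(\mathbb{N}^2_{\leqslant}))$ is exactly the identity partial self-map $\mathrm{id}_{D}$ for some cofinite $D\subseteq\mathbb{N}^2$ (namely $D=\operatorname{dom}\varepsilon=\operatorname{ran}\varepsilon$). Once this is in hand, the rest of the proposition is essentially a bookkeeping exercise.

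First I would record that for any two idempotents $\varepsilon_1,\varepsilon_2$ with $\operatorname{dom}\varepsilon_i=D_i$, the composition $\varepsilon_1\varepsilon_2$ is the identity partial map with domain $D_1\cap D_2$, which is again cofinite. Hence $\varepsilon_1\varepsilon_2\in E(\mathscr{P\!O}\!_{\infty}(\mathbb{N}^2_{\leqslant}))$, and from the symmetric formula $\varepsilon_1\varepsilon_2=\mathrm{id}_{D_1\cap D_2}=\varepsilon_2\varepsilon_1$ we read off simultaneously closure under multiplication and commutativity. Since $\mathbb{I}=\mathrm{id}_{\mathbb{N}^2}\in E(\mathscr{P\!O}\!_{\infty}(\mathbb{N}^2_{\leqslant}))$ and $\mathbb{I}\varepsilon=\varepsilon\mathbb{I}=\varepsilon$ for every idempotent $\varepsilon$, we conclude that $E(\mathscr{P\!O}\!_{\infty}(\mathbb{N}^2_{\leqslant}))$ is a commutative submonoid of $\mathscr{P\!O}\!_{\infty}(\mathbb{N}^2_{\leqslant})$.

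Next I would set up the map $\mathfrak{h}\colon E(\mathscr{P\!O}\!_{\infty}(\mathbb{N}^2_{\leqslant}))\to\mathscr{P}^*(\mathbb{N}^2)$ given by $(\varepsilon)\mathfrak{h}=\mathbb{N}^2\setminus\operatorname{dom}\varepsilon$, where $\mathscr{P}^*(\mathbb{N}^2)$ denotes the monoid of finite subsets of $\mathbb{N}^2$ under union (with the empty set as unit). Well-definedness is immediate because $\operatorname{dom}\varepsilon$ is cofinite, so its complement is finite. For injectivity, if $(\varepsilon_1)\mathfrak{h}=(\varepsilon_2)\mathfrak{h}$ then $\operatorname{dom}\varepsilon_1=\operatorname{dom}\varepsilon_2$ and both maps are the identity on this common domain, hence equal. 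For surjectivity, given any finite $F\subseteq\mathbb{N}^2$ the identity map $\mathrm{id}_{\mathbb{N}^2\setminus F}$ is obviously monotone, injective and has cofinite domain and image, so it lies in $\mathscr{P\!O}\!_{\infty}(\mathbb{N}^2_{\leqslant})$, is idempotent, and is sent to $F$ by $\mathfrak{h}$. Finally, the homomorphism property follows from the De Morgan identity:
\begin{equation*}
(\varepsilon_1\varepsilon_2)\mathfrak{h}=\mathbb{N}^2\setminus(D_1\cap D_2)=(\mathbb{N}^2\setminus D_1)\cup(\mathbb{N}^2\setminus D_2)=(\varepsilon_1)\mathfrak{h}\cup(\varepsilon_2)\mathfrak{h},
\end{equation*}
and $(\mathbb{I})\mathfrak{h}=\varnothing$.

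Honestly, there is no real obstacle here: every step is a one-line consequence of Proposition~\ref{proposition-3.3} together with elementary set-theoretic identities. The only point worth being explicit about is checking that the identity map on an arbitrary cofinite subset of $\mathbb{N}^2$ genuinely belongs to $\mathscr{P\!O}\!_{\infty}(\mathbb{N}^2_{\leqslant})$ (monotonicity and injectivity are trivial, cofiniteness of domain and image is given), which is what ensures $\mathfrak{h}$ is surjective onto all of $\mathscr{P}^*(\mathbb{N}^2)$.
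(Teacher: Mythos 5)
Your proof is correct and follows exactly the route the paper intends: the paper simply states that Proposition~\ref{proposition-3.3} implies this result, and your writeup supplies precisely the routine verification (idempotents are identity maps on cofinite domains, composition corresponds to intersection of domains, hence $\mathfrak{h}$ is a bijective homomorphism onto the finite subsets of $\mathbb{N}^2$ under union). Nothing is missing.
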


Later we shall need the following technical lemma.

\begin{lemma}\label{lemma-3.6}
Let $\alpha$ be an element of the semigroup $\mathscr{P\!O}\!_{\infty}(\mathbb{N}^2_{\leqslant})$. Then the following assertions hold:
\begin{itemize}
  \item[$(i)$] $\alpha=\gamma\alpha$ for some $\gamma\in\mathscr{P\!O}\!_{\infty}(\mathbb{N}^2_{\leqslant})$ if and only if the restriction $\gamma|_{\operatorname{dom}\alpha}\colon\operatorname{dom}\alpha\rightarrow \mathbb{N}^2$ is an identity partial map;
  \item[$(ii)$] $\alpha=\alpha\gamma$ for some $\gamma\in\mathscr{P\!O}\!_{\infty}(\mathbb{N}^2_{\leqslant})$ if and only if the restriction $\gamma|_{\operatorname{ran}\alpha}\colon\operatorname{ran}\alpha\rightarrow \mathbb{N}^2$ is an identity partial map
\end{itemize}
\end{lemma}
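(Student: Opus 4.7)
The plan is to prove the lemma by reducing both statements to elementary manipulations of partial maps, relying only on the fact that every element of $\mathscr{P\!O}\!_{\infty}(\mathbb{N}^2_{\leqslant})$ is an injective partial transformation; no other structural feature (monotonicity, cofiniteness, the product order on $\mathbb{N}^2$) is needed. I will treat assertion $(i)$ in detail and then indicate how $(ii)$ follows by a parallel but slightly simpler argument.

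For the forward direction of $(i)$, I will start from the equality $\gamma\alpha=\alpha$ and read off the domain of the left side as $\operatorname{dom}(\gamma\alpha)=\{x\in\operatorname{dom}\gamma\colon(x)\gamma\in\operatorname{dom}\alpha\}$. Equating this with $\operatorname{dom}\alpha$ yields both the containment $\operatorname{dom}\alpha\subseteq\operatorname{dom}\gamma$ and the property that $(x)\gamma\in\operatorname{dom}\alpha$ for each $x\in\operatorname{dom}\alpha$. Evaluating the two sides at such an $x$ then gives $((x)\gamma)\alpha=(x)\alpha$, and since $\alpha$ is injective I will conclude $(x)\gamma=x$, so that $\gamma$ restricts to the identity on $\operatorname{dom}\alpha$.

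For the backward direction of $(i)$, assume that $\gamma|_{\operatorname{dom}\alpha}$ is the identity partial map. The inclusion $\operatorname{dom}\alpha\subseteq\operatorname{dom}(\gamma\alpha)$ and the pointwise equality $(x)\gamma\alpha=(x)\alpha$ on $\operatorname{dom}\alpha$ are immediate. The critical step, which is the only genuine content of the proof, is the reverse inclusion $\operatorname{dom}(\gamma\alpha)\subseteq\operatorname{dom}\alpha$: I must rule out the possibility that some $x\in\operatorname{dom}\gamma\setminus\operatorname{dom}\alpha$ is sent by $\gamma$ into $\operatorname{dom}\alpha$. Here I will invoke the injectivity of $\gamma$: if $(x)\gamma=y\in\operatorname{dom}\alpha$, then by the hypothesis $(y)\gamma=y=(x)\gamma$, and injectivity forces $x=y\in\operatorname{dom}\alpha$, a contradiction.

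For $(ii)$, the argument is analogous but strictly easier, since $\operatorname{dom}(\alpha\gamma)\subseteq\operatorname{dom}\alpha$ holds by the definition of composition with no appeal to injectivity. Concretely, $\alpha\gamma=\alpha$ gives $\operatorname{ran}\alpha\subseteq\operatorname{dom}\gamma$ together with $((x)\alpha)\gamma=(x)\alpha$ for every $x\in\operatorname{dom}\alpha$, which is exactly the identity-restriction condition on $\operatorname{ran}\alpha$; the converse reverses these steps. The only real obstacle in the whole lemma is thus the subtle point in the backward direction of $(i)$, where the injectivity of $\gamma$ as a partial bijection is essential for pinning $\operatorname{dom}(\gamma\alpha)$ down to exactly $\operatorname{dom}\alpha$ rather than something larger.
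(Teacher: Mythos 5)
Your proof is correct and follows essentially the same elementary partial-bijection argument as the paper: the forward direction of $(i)$ uses the injectivity of $\alpha$ exactly as the paper does, and $(ii)$ is handled by the same domain/range bookkeeping. In fact you are slightly more careful than the paper on the direction it dismisses as trivial, since your observation that the injectivity of $\gamma$ is needed to exclude points of $\operatorname{dom}\gamma\setminus\operatorname{dom}\alpha$ being mapped into $\operatorname{dom}\alpha$ (so that $\operatorname{dom}(\gamma\alpha)$ does not exceed $\operatorname{dom}\alpha$) is a genuine point that the paper leaves implicit.
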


\begin{proof}
$(i)$ The implication $(\Leftarrow)$ is trivial.

$(\Rightarrow)$ Suppose that $\alpha=\gamma\alpha$ for some $\gamma\in\mathscr{P\!O}\!_{\infty}(\mathbb{N}^2_{\leqslant})$. Then we have that $\operatorname{dom}\alpha\subseteq \operatorname{dom}\gamma$ and $\operatorname{dom}\alpha\subseteq \operatorname{ran}\gamma$. Since $\gamma\colon \mathbb{N}^2\rightharpoonup\mathbb{N}^2$ is a partial bijection, the above arguments imply that $(i,j)\gamma=(i,j)$ for each $(i,j)\in\operatorname{dom}\alpha$. Indeed,  if $(i,j)\gamma=(m,n)\neq(i,j)$ for some $(i,j)\in\operatorname{dom}\alpha$ then since $\alpha\colon \mathbb{N}^2\rightharpoonup\mathbb{N}^2$ is a partial bijection we have that either
\begin{equation*}
    (i,j)\alpha=(i,j)\gamma\alpha=(m,n)\alpha\neq(i,j)\alpha, \qquad \hbox{if} \quad (m,n)\in\operatorname{dom}\alpha,
\end{equation*}
or $(m,n)\alpha$ is undefined. This completes the proof of the implication.

The proof of $(ii)$ is similar to that of $(i)$.
\end{proof}

The following theorem describes the Green relations $\mathscr{L}$, $\mathscr{R}$, $\mathscr{H}$ and $\mathscr{D}$ on the semigroup $\mathscr{P\!O}\!_{\infty}(\mathbb{N}^2_{\leqslant})$.

\begin{theorem}\label{theorem-3.7}
Let $\alpha$ and $\beta$ be elements of the semigroup $\mathscr{P\!O}\!_{\infty}(\mathbb{N}^2_{\leqslant})$. Then the following assertions hold:
\begin{itemize}
  \item[$(i)$] $\alpha\mathscr{L}\beta$ if and only if either $\alpha=\beta$ or $\alpha=\varpi\beta$;
  \item[$(ii)$] $\alpha\mathscr{R}\beta$ if and only if either $\alpha=\beta$ or $\alpha=\beta\varpi$;
  \item[$(iii)$] $\alpha\mathscr{H}\beta$ if and only if either $\alpha=\beta$ or $\alpha=\varpi\beta=\beta\varpi$;
  \item[$(iv)$] $\alpha\mathscr{D}\beta$ if and only if $\alpha=\mu\beta\nu$ for some $\mu,\nu\in H(\mathbb{I})$.
\end{itemize}
\end{theorem}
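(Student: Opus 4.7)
The plan is to prove $(i)$ in detail, deduce $(ii)$ by an analogous argument on ranges, and combine them for $(iii)$ and $(iv)$. The easy directions of $(i)$--$(iii)$ follow from $\varpi\varpi = \mathbb{I}$: if $\alpha = \varpi\beta$ then $\beta = \varpi\alpha$, giving $\alpha \mathscr{L} \beta$; dually for $\mathscr{R}$. For the hard direction of $(i)$, suppose $\alpha = \gamma\beta$ and $\beta = \delta\alpha$ in $\mathscr{P\!O}\!_{\infty}(\mathbb{N}^2_{\leqslant})$. Then $\alpha = \gamma\delta\alpha$ and $\beta = \delta\gamma\beta$, so Lemma~\ref{lemma-3.6}$(i)$ forces $(\gamma\delta)|_{\operatorname{dom}\alpha} = \mathbb{I}|_{\operatorname{dom}\alpha}$ and $(\delta\gamma)|_{\operatorname{dom}\beta} = \mathbb{I}|_{\operatorname{dom}\beta}$. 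From these one checks that $\gamma$ restricts to a bijection $\operatorname{dom}\alpha \to \operatorname{dom}\beta$ with inverse $\delta|_{\operatorname{dom}\beta}$, surjectivity coming from $y = \gamma(\delta(y))$ for $y \in \operatorname{dom}\beta$.

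By Lemma~\ref{lemma-2.4}, each of $\alpha, \beta, \gamma, \delta$ is either \emph{identity-type} (case $(1)$) or \emph{flip-type} (case $(2)$). Evaluating $\alpha = \gamma\beta$ on ${\uparrow}(N, N)$ for $N$ large enough that Theorem~\ref{theorem-2.7} makes every element act as $\mathbb{I}$ or $\varpi$ there, one sees that the type of $\gamma\beta$ is the $\mathbb{Z}_2$-sum of the types of $\gamma$ and $\beta$. The main case is when $\alpha$ and $\beta$ share a type: then both $\gamma$ and $\delta$ are identity-type, so Theorem~\ref{theorem-2.7}$(i_1)$ gives $\gamma(x) \leqslant x$ for $x \in \operatorname{dom}\gamma$ and $\delta(y) \leqslant y$ for $y \in \operatorname{dom}\delta$. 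For any $x \in \operatorname{dom}\alpha$, putting $y = \gamma(x) \in \operatorname{dom}\beta$ yields $x = \delta(y) \leqslant y = \gamma(x) \leqslant x$, forcing $\gamma(x) = x$; hence $\operatorname{dom}\alpha = \operatorname{dom}\beta$ and $\alpha(x) = \beta(\gamma(x)) = \beta(x)$, so $\alpha = \beta$. If $\alpha$ and $\beta$ have different types, I rewrite $\alpha = (\gamma\varpi)(\varpi\beta)$ and $\varpi\beta = (\varpi\delta)\alpha$: now $\varpi\beta$ has the same type as $\alpha$ and both $\gamma\varpi, \varpi\delta$ are identity-type, so the previous case applied to the pair $(\alpha, \varpi\beta)$ gives $\alpha = \varpi\beta$.

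The proof of $(ii)$ is entirely parallel, using Lemma~\ref{lemma-3.6}$(ii)$ to obtain $(\delta\gamma)|_{\operatorname{ran}\alpha} = \mathbb{I}|_{\operatorname{ran}\alpha}$ and $(\gamma\delta)|_{\operatorname{ran}\beta} = \mathbb{I}|_{\operatorname{ran}\beta}$, and running the same type/monotonicity trick on ranges. Assertion $(iii)$ is immediate from $\mathscr{H} = \mathscr{L} \cap \mathscr{R}$: intersecting the dichotomies $\alpha \in \{\beta, \varpi\beta\}$ and $\alpha \in \{\beta, \beta\varpi\}$ leaves $\alpha = \beta$ or $\alpha = \varpi\beta = \beta\varpi$. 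For $(iv)$, using $\mathscr{D} = \mathscr{L} \circ \mathscr{R}$: if $\alpha = \mu\beta\nu$ with $\mu, \nu \in H(\mathbb{I})$, set $\zeta = \beta\nu$; then $\alpha = \mu\zeta$ gives $\alpha \mathscr{L} \zeta$ (by $(i)$) and $\zeta \mathscr{R} \beta$ (by $(ii)$), so $\alpha \mathscr{D} \beta$. Conversely, any witness $\zeta$ with $\alpha \mathscr{L} \zeta \mathscr{R} \beta$ gives $\alpha \in \{\zeta, \varpi\zeta\}$ and $\zeta \in \{\beta, \beta\varpi\}$, so $\alpha \in \{\mu\beta\nu : \mu, \nu \in H(\mathbb{I})\}$.

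The main obstacle will be the type-composition analysis and verifying surjectivity of $\gamma|_{\operatorname{dom}\alpha}$ onto $\operatorname{dom}\beta$, both of which crucially exploit having \emph{both} equations $\alpha = \gamma\beta$ and $\beta = \delta\alpha$ available together with Lemma~\ref{lemma-3.6}. Once these ingredients are in place, the monotonicity trap $x \leqslant \gamma(x) \leqslant x$ finishes the argument cleanly.
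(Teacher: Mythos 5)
Your proof is correct and takes essentially the same route as the paper: both arguments feed $\alpha=(\gamma\delta)\alpha$ and $\beta=(\delta\gamma)\beta$ into Lemma 3.6, use the type dichotomy of Lemma 2.4 together with the monotonicity bound $(i,j)\gamma\leqslant(i,j)$ from Theorem 2.7(1) to trap $\gamma$ and $\delta$ into restricting to identities when $\alpha$ and $\beta$ have the same type, handle the mixed-type case by passing to $\varpi\beta$, and then derive (ii)--(iv) exactly as the paper does. The only (cosmetic) difference is that you make the $\mathbb{Z}_2$-additivity of types explicit and thereby treat the two matching-type cases uniformly, where the paper reduces the both-flip case to the both-identity case by right-multiplying by $\varpi$.
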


\begin{proof}
$(i)$ The implication $(\Leftarrow)$ is trivial.

$(\Rightarrow)$ Suppose that $\alpha\mathscr{L}\beta$ in the semigroup $\mathscr{P\!O}\!_{\infty}(\mathbb{N}^2_{\leqslant})$. Then there exist $\gamma,\delta\in\mathscr{P\!O}\!_{\infty}(\mathbb{N}^2_{\leqslant})$ such that $\alpha=\gamma\beta$ and $\beta=\delta\alpha$. The last equalities imply that
$\operatorname{ran}\alpha=\operatorname{ran}\beta$.

By Lemma~\ref{lemma-2.4} only one of the following cases holds:
\begin{itemize}
  \item[$(i_1)$] $(\textsf{H}^{1}_{\operatorname{dom}\alpha})\alpha\subseteq \textsf{H}^1$ and $(\textsf{H}^{1}_{\operatorname{dom}\beta})\beta\subseteq \textsf{H}^1$;
  \item[$(i_2)$] $(\textsf{H}^{1}_{\operatorname{dom}\alpha})\alpha\subseteq \textsf{H}^1$ and $(\textsf{H}^{1}_{\operatorname{dom}\beta})\beta\subseteq \textsf{V}^1$;
  \item[$(i_3)$] $(\textsf{H}^{1}_{\operatorname{dom}\alpha})\alpha\subseteq \textsf{V}^1$ and $(\textsf{H}^{1}_{\operatorname{dom}\beta})\beta\subseteq \textsf{H}^1$;
  \item[$(i_4)$] $(\textsf{H}^{1}_{\operatorname{dom}\alpha})\alpha\subseteq \textsf{V}^1$ and $(\textsf{H}^{1}_{\operatorname{dom}\beta})\beta\subseteq \textsf{V}^1$.
\end{itemize}

Suppose that case $(i_1)$ holds. Then the equalities $\alpha=\gamma\beta$ and $\beta=\delta\alpha$ imply that
\begin{equation}\label{eq-2.1}
    (\textsf{H}^{1}_{\operatorname{dom}\gamma})\gamma\subseteq \textsf{H}^1 \qquad \hbox{ and } \qquad
    (\textsf{H}^{1}_{\operatorname{dom}\delta})\delta\subseteq \textsf{H}^1,
\end{equation}
and moreover we have that $\alpha=\gamma\delta\alpha$ and $\beta=\delta\gamma\beta$. Hence by Lemma~\ref{lemma-3.6} we have that the restrictions $(\gamma\delta)|_{\operatorname{dom}\alpha}\colon\operatorname{dom}\alpha\rightarrow \mathbb{N}^2$ and $(\delta\gamma)|_{\operatorname{dom}\beta}\colon\operatorname{dom}\beta\rightarrow \mathbb{N}^2$ are identity partial maps. Then by condition $(\ref{eq-2.1})$ we obtain that the restrictions $\gamma|_{\operatorname{dom}\alpha}\colon\operatorname{dom}\alpha\rightarrow \mathbb{N}^2$ and $\delta|_{\operatorname{dom}\beta}\colon\operatorname{dom}\beta\rightarrow \mathbb{N}^2$ are also identity partial maps. Indeed, other wise there exists $(i,j)\in\operatorname{dom}\alpha$ such that either $(i,j)\gamma\nleqslant(i,j)$ or $(i,j)\delta\nleqslant(i,j)$, which contradicts Theorem~\ref{theorem-2.7}$(1)$. Thus, the above arguments imply that in case $(i_1)$ we have that $\alpha=\beta$.

Suppose that case $(i_2)$ holds. Then we have that $\alpha=\gamma\beta=\gamma\mathbb{I}\beta=\gamma(\varpi\varpi)\beta=(\gamma\varpi)(\varpi\beta)$ and $\varpi\beta=(\varpi\delta)\alpha$. Hence we get that $\alpha\mathscr{L}(\varpi\beta)$, $(\textsf{H}^{1}_{\operatorname{dom}\alpha})\alpha\subseteq \textsf{H}^1$ and $(\textsf{H}^{1}_{\operatorname{dom}(\varpi\beta)})\varpi\beta\subseteq \textsf{H}^1$. Then we apply case $(i_1)$ for elements $\alpha$ and $\varpi\beta$ and obtain that $\alpha=\varpi\beta$.

In case $(i_3)$ the proof of the equality $\alpha=\varpi\beta$ is similar to case $(i_2)$.

Suppose that case $(i_4)$ holds. Then the equalities $\alpha=\gamma\beta$ and $\beta=\delta\alpha$ imply that $\alpha\varpi=\gamma(\beta\varpi)$ and $\beta\varpi=\delta(\alpha\varpi)$, which implies that $(\alpha\varpi)\mathscr{L}(\beta\varpi)$. Since for the elements $\alpha\varpi$ and $\beta\varpi$ of the semigroup $\mathscr{P\!O}\!_{\infty}(\mathbb{N}^2_{\leqslant})$ case $(i_1)$ holds, $\alpha\varpi=\beta\varpi$ and hence $\alpha=\alpha\varpi\varpi=\alpha\varpi\varpi=\beta$, which completes the proof of $(i)$.

The proof of assertion $(ii)$ is dual to that of $(i)$.

Assertion $(iii)$ follows from $(i)$  $(ii)$.

$(iv)$ Suppose that $\alpha\mathscr{D}\beta$ in $\mathscr{P\!O}\!_{\infty}(\mathbb{N}^2_{\leqslant})$. Then there exists $\gamma\in\mathscr{P\!O}\!_{\infty}(\mathbb{N}^2_{\leqslant})$ such that $\alpha\mathscr{L}\gamma$ and $\gamma\mathscr{R}\beta$. By Proposition~\ref{proposition-3.1} the group of units $H(\mathbb{I})$ of the semigroup $\mathscr{P\!O}\!_{\infty}(\mathbb{N}^2_{\leqslant})$ has two distinct elements $\mathbb{I}$ and $\varpi$. By $(i)$,  $(ii)$, there exist $\mu,\nu\in H(\mathbb{I})$ such that $\alpha=\mu\gamma$ and $\gamma=\beta\nu$ and hence $\alpha=\mu\beta\nu$. Converse, suppose that $\alpha=\mu\beta\nu$ for some $\mu,\nu\in H(\mathbb{I})$. Then by $(i)$,  $(ii)$, we have that $\alpha\mathscr{L}(\beta\nu)$ and $\beta\mathscr{R}(\beta\nu)$, and hence $\alpha\mathscr{D}\beta$.
\end{proof}

Theorem~\ref{theorem-3.7} implies Corollary~\ref{corollary-3.8} which gives the inner characterization of  the Green relations $\mathscr{L}$, $\mathscr{R}$, $\mathscr{H}$ and $\mathscr{D}$ on the semigroup $\mathscr{P\!O}\!_{\infty}(\mathbb{N}^2_{\leqslant})$ as partial permutations of the poset $\mathbb{N}^2_{\leqslant}$.

\begin{corollary}\label{corollary-3.8}
\begin{itemize}
  \item[$(i)$] Every $\mathscr{L}$-class of $\mathscr{P\!O}\!_{\infty}(\mathbb{N}^2_{\leqslant})$ contains two distinct elements.
  \item[$(ii)$] Every $\mathscr{R}$-class of $\mathscr{P\!O}\!_{\infty}(\mathbb{N}^2_{\leqslant})$ contains  two distinct elements.
  \item[$(iii)$] Every $\mathscr{H}$-class of $\mathscr{P\!O}\!_{\infty}(\mathbb{N}^2_{\leqslant})$ contains at most two distinct elements.
  \item[$(iv)$] The $\mathscr{H}$-class of $\mathscr{P\!O}\!_{\infty}(\mathbb{N}^2_{\leqslant})$ which contains an element $\alpha$ consists of two distinct elements if and only if $\operatorname{dom}\alpha=\overline{\operatorname{dom}\alpha}$, $\operatorname{ran}\alpha=\overline{\operatorname{ran}\alpha}$ and $(\overline{(i,j)})\alpha=\overline{(i,j)\alpha}$ for each $(i,j)\in\operatorname{dom}\alpha$, and the $\mathscr{H}$-class of $\alpha$ is a singleton in the other case.
  \item[$(v)$] The $\mathscr{H}$-class of $\mathscr{P\!O}\!_{\infty}(\mathbb{N}^2_{\leqslant})$ which contains an idempotent $\varepsilon$ consists of two distinct elements if and only if $\operatorname{dom}\varepsilon=\overline{\operatorname{dom}\varepsilon}$.
  \item[$(vi)$] The $\mathscr{H}$-class of $\mathscr{P\!O}\!_{\infty}(\mathbb{N}^2_{\leqslant})$ which contains an idempotent $\varepsilon$ is a singleton if and only if $\operatorname{dom}\varepsilon\neq\overline{\operatorname{dom}\varepsilon}$.
  \item[$(vii)$] Every $\mathscr{D}$-class of $\mathscr{P\!O}\!_{\infty}(\mathbb{N}^2_{\leqslant})$ contains either two or four distinct elements.
  \item[$(viii)$] A $\mathscr{D}$-class of $\mathscr{P\!O}\!_{\infty}(\mathbb{N}^2_{\leqslant})$ has two distinct elements if and only if it contains only one $\mathscr{H}$-class.
  \item[$(ix)$] A $\mathscr{D}$-class of $\mathscr{P\!O}\!_{\infty}(\mathbb{N}^2_{\leqslant})$ has two distinct elements if and only if it contains a non-singleton $\mathscr{H}$-class.
  \item[$(x)$] A $\mathscr{D}$-class of $\mathscr{P\!O}\!_{\infty}(\mathbb{N}^2_{\leqslant})$ has four distinct elements if and only every its $\mathscr{H}$-class is singleton.
   \item[$(xi)$] A $\mathscr{D}$-class of $\mathscr{P\!O}\!_{\infty}(\mathbb{N}^2_{\leqslant})$ has four distinct elements if and only it contains a singleton $\mathscr{H}$-class.
  \item[$(xii)$] The $\mathscr{D}$-class of $\mathscr{P\!O}\!_{\infty}(\mathbb{N}^2_{\leqslant})$ which contains an idempotent $\varepsilon$ consists of two distinct elements if and only if $\operatorname{dom}\varepsilon=\overline{\operatorname{dom}\varepsilon}$.
  \item[$(xiii)$] The $\mathscr{D}$-class of $\mathscr{P\!O}\!_{\infty}(\mathbb{N}^2_{\leqslant})$ which contains an idempotent $\varepsilon$ consists of four distinct elements if and only if $\operatorname{dom}\varepsilon\neq\overline{\operatorname{dom}\varepsilon}$.
\end{itemize}
\end{corollary}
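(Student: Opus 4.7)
The plan is to use Theorem~\ref{theorem-3.7} as the sole engine, combined with $H(\mathbb{I}) = \{\mathbb{I}, \varpi\}$ from Proposition~\ref{proposition-3.1}. These give the explicit descriptions $L_\alpha \subseteq \{\alpha, \varpi\alpha\}$, $R_\alpha \subseteq \{\alpha, \alpha\varpi\}$, $H_\alpha \subseteq \{\alpha, \varpi\alpha\}$ (the second element being equal to $\alpha\varpi$ when it appears), and $D_\alpha \subseteq \{\alpha, \varpi\alpha, \alpha\varpi, \varpi\alpha\varpi\}$, making every cardinality upper bound in the corollary immediate. The remaining content amounts to deciding when the listed elements collapse.

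The one universal distinctness fact needed is $\alpha \neq \varpi\alpha$ and $\alpha \neq \alpha\varpi$ for all $\alpha \in \mathscr{P\!O}\!_{\infty}(\mathbb{N}^2_{\leqslant})$. If $\alpha = \varpi\alpha$, then Proposition~\ref{proposition-2.9}(i) forces $\operatorname{dom}\alpha = \overline{\operatorname{dom}\alpha}$, and for each $(i,j) \in \operatorname{dom}\alpha$ the equality $(i,j)\alpha = (j,i)\alpha$ together with injectivity of $\alpha$ forces $(i,j) = (j,i)$; since $\operatorname{dom}\alpha$ is cofinite in $\mathbb{N}^2$ it contains off-diagonal points, a contradiction. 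The second inequality is symmetric via Proposition~\ref{proposition-2.9}(ii). Items (i), (ii), (iii) follow at once.

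For (iv) I would translate $\varpi\alpha = \alpha\varpi$ into domain, range and pointwise conditions using Proposition~\ref{proposition-2.9}: equality of domains gives $\operatorname{dom}\alpha = \overline{\operatorname{dom}\alpha}$, equality of ranges gives $\operatorname{ran}\alpha = \overline{\operatorname{ran}\alpha}$, and pointwise agreement (via $(j,i)\alpha = (i,j)\alpha\varpi = \overline{(i,j)\alpha}$) is exactly the third condition; the converse is a direct check. For an idempotent $\varepsilon$, which is the identity on its domain, the range and pointwise conditions reduce to the single requirement $\operatorname{dom}\varepsilon = \overline{\operatorname{dom}\varepsilon}$, yielding (v) and its contrapositive (vi).

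Items (vii)--(xiii) reduce to a single dichotomy on whether $\varpi\alpha = \alpha\varpi$. If yes, then $\varpi\alpha\varpi = \alpha$ as well, so $D_\alpha = L_\alpha = R_\alpha = H_\alpha = \{\alpha, \varpi\alpha\}$ is a single non-singleton $\mathscr{H}$-class of size two. If no, then the four elements $\alpha, \varpi\alpha, \alpha\varpi, \varpi\alpha\varpi$ are pairwise distinct -- each would-be coincidence collapses, after multiplication by a suitable unit, to one of the three already forbidden equalities -- and every $\mathscr{H}$-class inside $D_\alpha$ is a singleton, since applying the criterion of (iv) to each of the four representatives $\beta$ still yields $\varpi\beta \neq \beta\varpi$. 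From this dichotomy the biconditionals (vii)--(xi) read off directly, and (xii)--(xiii) follow by combining it with (v)--(vi). The main bookkeeping obstacle is verifying the six pairwise distinctness relations in the second case; this is the step I expect to take the most care, but it is routine once the initial lemma $\alpha \neq \varpi\alpha,\, \alpha\varpi$ is in hand.
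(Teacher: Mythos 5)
Your proposal is correct and follows essentially the same route as the paper: reduce everything to Theorem~\ref{theorem-3.7} and the dichotomy $\varpi\alpha=\alpha\varpi$ versus $\varpi\alpha\neq\alpha\varpi$. The only (welcome) difference is that you explicitly justify the key distinctness facts $\alpha\neq\varpi\alpha$ and $\alpha\neq\alpha\varpi$ via injectivity and cofiniteness, which the paper merely asserts.
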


\begin{proof}
Statements $(i)$, $(ii)$ and $(iii)$  are trivial and they follow from the equality $\varpi\varpi=\mathbb{I}$ and the corresponding statements of Theorem~\ref{theorem-3.7}.

$(iv)$ By $(i)$ and $(ii)$ we have that the $\mathscr{H}$-class of $\mathscr{P\!O}\!_{\infty}(\mathbb{N}^2_{\leqslant})$ which contains an element $\alpha$ contains at most two distinct elements.

$(\Rightarrow)$ Assume that $\alpha\mathscr{H}\beta$ in $\mathscr{P\!O}\!_{\infty}(\mathbb{N}^2_{\leqslant})$ and $\alpha\neq\beta$. By Theorem~\ref{theorem-3.7}$(iii)$, $\beta=\alpha\varpi=\varpi\alpha$. Then by the definition of $\varpi$ we get that $\operatorname{dom}\beta=\operatorname{dom}\alpha=\overline{\operatorname{dom}\alpha}$ and $\operatorname{ran}\beta=\operatorname{ran}\alpha=\overline{\operatorname{ran}\alpha}$. If $(i,j)\in\operatorname{dom}\alpha$ and $(i,j)\alpha=(m,n)$ then
\begin{equation*}
    (n,m)=(m,n)\varpi=(i,j)\alpha\varpi=(i,j)\beta=(i,j)\varpi\alpha=(j,i)\alpha.
\end{equation*}
This completes the proof of the implication.

The converse implication is trivial, and the last statement of item $(iv)$ follows from the above part of its proof.

$(v)$ If $\operatorname{dom}\varepsilon=\overline{\operatorname{dom}\varepsilon}$ then $\varepsilon\varpi=\varpi\varepsilon\neq\varepsilon$. Conversely, suppose that $\varepsilon\varpi=\varpi\varepsilon\neq\varepsilon$.  Since $\operatorname{dom}\varpi=\operatorname{ran}\varpi=\mathbb{N}\times\mathbb{N}$ and $\operatorname{dom}\varepsilon=\operatorname{ran}\varepsilon$, the equality $\varepsilon\varpi=\varpi\varepsilon$ implies that $\operatorname{dom}(\varepsilon\varpi)=\operatorname{dom}\varepsilon=\operatorname{ran}\varepsilon=\operatorname{ran}(\varpi\varepsilon)$, and hence the definition of the element $\varpi\in H(\mathbb{I})$ implies that $\operatorname{dom}\varepsilon=\overline{\operatorname{dom}\varepsilon}$.

Statement $(vi)$ follows from items $(iii)$, $(v)$.

$(vii)$ Theorem~\ref{theorem-3.7}$(iv)$ and $(i)$, $(ii)$ imply that every $\mathscr{D}$-class of the semigroup $\mathscr{P\!O}\!_{\infty}(\mathbb{N}^2_{\leqslant})$ contains at most four and at least two distinct elements. Suppose to the contrary that there exists a $\mathscr{D}$-class $D_\alpha$ in $\mathscr{P\!O}\!_{\infty}(\mathbb{N}^2_{\leqslant})$ which contains three distinct elements such that $\alpha\in D_\alpha$ for some element $\alpha$ of the semigroup $\mathscr{P\!O}\!_{\infty}(\mathbb{N}^2_{\leqslant})$. By Theorem~\ref{theorem-3.7}$(iv)$, $\varpi\alpha,\alpha\varpi,\varpi\alpha\varpi\in D_\alpha$. Since $\varpi\gamma\neq\gamma\neq\gamma\varpi$ for any $\gamma\in\mathscr{P\!O}\!_{\infty}(\mathbb{N}^2_{\leqslant})$, we have that $\varpi\alpha=\alpha\varpi$ or $\alpha=\varpi\alpha\varpi$. If $\varpi\alpha=\alpha\varpi$ then the definition of the element $\varpi$ of $\mathscr{P\!O}\!_{\infty}(\mathbb{N}^2_{\leqslant})$ implies that $\alpha=\varpi\varpi\alpha=\varpi\alpha\varpi$. Similarly, if $\alpha=\varpi\alpha\varpi$ then $\varpi\alpha=\varpi\varpi\alpha\varpi=\alpha\varpi$. This completes the proof of the statement.

$(viii)$ $(\Rightarrow)$ Assume that a $\mathscr{D}$-class of $\mathscr{P\!O}\!_{\infty}(\mathbb{N}^2_{\leqslant})$ has two distinct elements and it contains $\alpha$. Then the proof of item $(vii)$ implies that $\varpi\alpha=\alpha\varpi$ and $\alpha=\varpi\alpha\varpi$.  By Theorem~\ref{theorem-3.7}$(iv)$ we have that $D_\alpha=H_\alpha$.

Implication $(\Leftarrow)$ is trivial.

$(ix)$ Implication $(\Rightarrow)$ follows form item $(viii)$.

$(\Leftarrow)$ Assume that there exists a $\mathscr{D}$-class of $\mathscr{P\!O}\!_{\infty}(\mathbb{N}^2_{\leqslant})$ which contains a non-singleton $\mathscr{H}$-class $H_\alpha$ of $\mathscr{P\!O}\!_{\infty}(\mathbb{N}^2_{\leqslant})$ for some $\alpha\in\mathscr{P\!O}\!_{\infty}(\mathbb{N}^2_{\leqslant})$. By Theorem~\ref{theorem-3.7}$(iii)$ we have that $H_\alpha=\left\{\alpha,\alpha\varpi\right\}$ and $\alpha\neq\alpha\varpi=\varpi\alpha$. Then the last equality implies that $\alpha=\varpi\alpha\varpi$. Hence by Theorem~\ref{theorem-3.7}$(iv)$, $D_\alpha=H_\alpha$, which complete the proof of the implication.

Statement $(x)$ follows from  $(viii)$,  $(ix)$.

$(xi)$ By Theorem~2.3 of \cite{Clifford-Preston-1961-1967} any two $\mathscr{H}$-classes of an arbitrary $\mathscr{D}$-class are of the same cardinality. Now, we apply statement $(x)$.

Statement $(xii)$ follows from $(viii)$,  $(v)$.

Items $(x)$ and  $(vi)$ imply statement $(xiii)$.
\end{proof}

We need the following three lemmas.

\begin{lemma}\label{lemma-3.9}
Let $\alpha,\beta$ and $\gamma$ be elements of the semigroup $\mathscr{P\!O}\!_{\infty}(\mathbb{N}^2_{\leqslant})$ such that $\alpha=\beta\alpha\gamma$. Then the following statements hold:
\begin{itemize}
  \item[$(i)$] if \emph{$(\textsf{H}^{1}_{\operatorname{dom}\beta})\beta\subseteq \textsf{H}^1$} then the restrictions $\beta|_{\operatorname{dom}\alpha}\colon \operatorname{dom}\alpha\rightharpoonup \mathbb{N}\times\mathbb{N}$ and $\gamma|_{\operatorname{ran}\alpha}\colon \operatorname{ran}\alpha\rightharpoonup \mathbb{N}\times\mathbb{N}$ are identity partial maps;
  \item[$(ii)$] if \emph{$(\textsf{H}^{1}_{\operatorname{dom}\beta})\beta\subseteq \textsf{V}^1$} then $(i,j)\beta=(j,i)$ for each $(i,j)\in\operatorname{dom}\alpha$ and $(m,n)\gamma=(n,m)$ for each $(m,n)\in\operatorname{ran}\alpha$; and moreover in this case we have that $\operatorname{dom}\alpha=\overline{\operatorname{dom}\alpha}$, $\operatorname{ran}\alpha=\overline{\operatorname{ran}\alpha}$ and $(j,i)\alpha=\overline{(i,j)\alpha}$ for any $(i,j)\in\operatorname{dom}\alpha$, i.e., $\alpha=\varpi\alpha\varpi$.
\end{itemize}
\end{lemma}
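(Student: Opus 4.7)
The plan is to prove (i) directly by an iterate-and-descent argument in $\operatorname{dom}\alpha$, and then to deduce (ii) by applying (i) to the square identity $\alpha=\beta^{2}\alpha\gamma^{2}$ after checking that $\beta^{2}$ falls under the hypothesis of (i), finally pinning down the wrong alternative for $\gamma$ via Lemma~\ref{lemma-2.4} and injectivity of $\alpha$.

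For (i), the identity $\alpha=\beta\alpha\gamma$ forces $\operatorname{dom}\alpha\subseteq\operatorname{dom}\beta$ and $(\operatorname{dom}\alpha)\beta\subseteq\operatorname{dom}\alpha$, and by induction $\alpha=\beta^{n}\alpha\gamma^{n}$ for every positive integer $n$, so the sequence $a_{n}=(i,j)\beta^{n}$ stays in $\operatorname{dom}\alpha$ for every $(i,j)\in\operatorname{dom}\alpha$. Theorem~\ref{theorem-2.7}(1) gives $a_{n+1}\leqslant a_{n}$ for every $n$. Suppose for contradiction that $(i,j)\beta\neq(i,j)$. Injectivity of $\beta$ applied iteratively shows that any equality $a_{n}=a_{n+1}$ forces $\beta(a_{n-1})=\beta(a_{n})$, hence $a_{n-1}=a_{n}$, and pushing back $a_{0}=a_{1}$, contradicting the assumption. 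So $\{a_{n}\}$ is a strictly descending chain in the well-founded poset $\mathbb{N}^{2}_{\leqslant}$, which is impossible. Therefore $\beta|_{\operatorname{dom}\alpha}$ is the identity partial map; substituting into $\alpha=\beta\alpha\gamma$ gives $(i,j)\alpha=(i,j)\alpha\gamma$ for every $(i,j)\in\operatorname{dom}\alpha$, whence $(m,n)\gamma=(m,n)$ for every $(m,n)\in\operatorname{ran}\alpha$.

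For (ii), first check using Proposition~\ref{proposition-2.5}(2) that if $(k,1)\in\operatorname{dom}\beta^{2}$, then $(k,1)\beta\in\textsf{V}^{1}\cap\operatorname{dom}\beta$ and hence $(k,1)\beta^{2}\in\textsf{H}^{1}$; so $\beta^{2}$ satisfies the hypothesis of (i). Applying (i) to $\alpha=\beta^{2}\alpha\gamma^{2}$ yields $(i,j)\beta^{2}=(i,j)$ on $\operatorname{dom}\alpha$ and $(m,n)\gamma^{2}=(m,n)$ on $\operatorname{ran}\alpha$. Theorem~\ref{theorem-2.7}(2) gives $(i,j)\beta=(i',j')\leqslant(j,i)$ and $(i',j')\beta=(i,j)\leqslant(j',i')$; combining $i'\leqslant j$, $j'\leqslant i$, $i\leqslant j'$, $j\leqslant i'$ forces $i'=j$, $j'=i$, so $\beta|_{\operatorname{dom}\alpha}$ coincides with $\varpi|_{\operatorname{dom}\alpha}$. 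The inclusion $(\operatorname{dom}\alpha)\beta\subseteq\operatorname{dom}\alpha$ upgrades to $\operatorname{dom}\alpha=\overline{\operatorname{dom}\alpha}$. Substituting into $\alpha=\beta\alpha\gamma$ and using the same equation at $(j,i)$ yields $((j,i)\alpha)\gamma=(i,j)\alpha$ and $((i,j)\alpha)\gamma=(j,i)\alpha$, so $\gamma$ swaps the pair $(i,j)\alpha$ and $(j,i)\alpha$. The alternative $(\textsf{H}^{1}_{\operatorname{dom}\gamma})\gamma\subseteq\textsf{H}^{1}$ in Lemma~\ref{lemma-2.4} would force, via Theorem~\ref{theorem-2.7}(1), $(j,i)\alpha\leqslant(i,j)\alpha$ and $(i,j)\alpha\leqslant(j,i)\alpha$, hence $(i,j)\alpha=(j,i)\alpha$ on the cofinite set $\operatorname{dom}\alpha$, violating injectivity of $\alpha$ at off-diagonal points; so $(\textsf{H}^{1}_{\operatorname{dom}\gamma})\gamma\subseteq\textsf{V}^{1}$, and Theorem~\ref{theorem-2.7}(2) applied to both swap-equations delivers $(j,i)\alpha=\overline{(i,j)\alpha}$ (that is, $\alpha=\varpi\alpha\varpi$) and $(m,n)\gamma=(n,m)$ on $\operatorname{ran}\alpha$.

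The main obstacle is the descent step in (i): one must rule out that the non-increasing chain $\{(i,j)\beta^{n}\}$ stabilizes somewhere strictly below $(i,j)$, which forces the use of injectivity of all iterates $\beta^{n}$, not just of $\beta$ alone, in order to collapse any equality back to $(i,j)=(i,j)\beta$. The order-theoretic pinning-down of $(i,j)\beta=(j,i)$ from two $\leqslant$-inequalities together with the square identity, and the injectivity-based elimination of the wrong alternative for $\gamma$, are the other genuine content; everything else is bookkeeping on $\operatorname{dom}\alpha$ and $\operatorname{ran}\alpha$.
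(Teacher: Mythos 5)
Your proof is correct, and its overall architecture matches the paper's: part (ii) is reduced to part (i) by squaring the identity to $\alpha=\beta^{2}\alpha\gamma^{2}$, checking that $\beta^{2}$ satisfies the hypothesis of (i), and then pinning down $(i,j)\beta=(j,i)$ by playing the two inequalities from Theorem~\ref{theorem-2.7}(2) against each other. Where you genuinely diverge is in the mechanism for part (i). The paper first splits into cases according to whether $(\textsf{H}^{1}_{\operatorname{dom}\alpha})\alpha\subseteq\textsf{H}^{1}$ or $\subseteq\textsf{V}^{1}$, uses Lemma~\ref{lemma-2.4} to determine the type of $\gamma$, and derives a one-step contradiction from a supposed strict drop by pushing it through the whole composition, $(i,j)\alpha=(i,j)\beta\alpha\gamma<(i,j)\alpha\gamma\leqslant(i,j)\alpha$, handling the remaining case by conjugating with $\varpi$. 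You instead iterate to $\alpha=\beta^{n}\alpha\gamma^{n}$, note that $\operatorname{dom}\alpha$ is $\beta$-invariant, and run an infinite-descent argument on the non-increasing orbit $(i,j)\beta^{n}$, using injectivity of $\beta$ to propagate any stabilization back to $(i,j)\beta=(i,j)$. This buys you a cleaner part (i): no case split on the type of $\alpha$, no $\varpi$-conjugation, and no need to analyse $\gamma$ before $\beta$ is pinned down, at the modest cost of invoking well-foundedness of $\mathbb{N}^{2}_{\leqslant}$. Two smaller points in your part (ii) are actually improvements in rigour: evaluating Theorem~\ref{theorem-2.7}(2) at the point $(i,j)\beta$ (which certainly lies in $\operatorname{dom}\beta$) rather than at $(j,i)$ sidesteps a domain issue the paper glosses over, and your elimination of the identity alternative for $\gamma$ via the swap equations and injectivity of $\alpha$ makes explicit what the paper dismisses as ``similar arguments.''
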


\begin{proof}
$(i)$ Assume that the inclusion $(\textsf{H}^{1}_{\operatorname{dom}\beta})\beta\subseteq \textsf{H}^1$ holds. Then one of the following cases holds:
\begin{itemize}
  \item[$(1)$] $(\textsf{H}^{1}_{\operatorname{dom}\alpha})\alpha\subseteq \textsf{H}^1$;
  \item[$(2)$] $(\textsf{H}^{1}_{\operatorname{dom}\alpha})\alpha\subseteq \textsf{V}^1$.
\end{itemize}

If case $(1)$ holds then the equality $\alpha=\beta\alpha\gamma$ and Lemma~\ref{lemma-2.4} imply that $(\textsf{H}^{1}_{\operatorname{dom}\gamma})\gamma\subseteq \textsf{H}^1$. By Theorem~\ref{theorem-2.7}$(1)$, $(i,j)\beta\leqslant(i,j)$ for any $(i,j)\in\operatorname{dom}\beta$ and $(m,n)\gamma\leqslant(m,n)$ for any $(m,n)\in\operatorname{dom}\gamma$. Suppose that $(i,j)\beta<(i,j)$ for some $(i,j)\in\operatorname{dom}\alpha$. Then we have that
\begin{equation*}
    (i,j)\alpha=(i,j)\beta\alpha\gamma<(i,j)\alpha\gamma\leqslant(i,j)\alpha,
\end{equation*}
which contradicts the equality $\alpha=\beta\alpha\gamma$. The obtained contradiction implies that the restriction $\beta|_{\operatorname{dom}\alpha}\colon \operatorname{dom}\alpha\rightharpoonup \mathbb{N}\times\mathbb{N}$ is an identity partial map. This and the equality $\alpha=\beta\alpha\gamma$ imply that the restriction $\gamma|_{\operatorname{ran}\alpha}\colon \operatorname{ran}\alpha\rightharpoonup \mathbb{N}\times\mathbb{N}$ is an identity partial map too.

Suppose that case $(2)$ holds. Then we have that $(\textsf{H}^{1}_{\operatorname{dom}\alpha})\alpha\varpi\subseteq \textsf{H}^1$. Now, the equality $\alpha=\beta\alpha\gamma$ and the definition of the element $\varpi$ the semigroup $\mathscr{P\!O}\!_{\infty}(\mathbb{N}^2_{\leqslant})$ imply that
\begin{equation*}
    \alpha\varpi=\beta\alpha\gamma\varpi=\beta(\alpha\varpi)(\varpi\gamma\varpi).
\end{equation*}
Then we apply case $(1)$. This completes the proof of $(i)$.

$(ii)$ Assume that the inclusion $(\textsf{H}^{1}_{\operatorname{dom}\beta})\beta\subseteq \textsf{V}^1$ holds. Then the equality $\alpha=\beta\alpha\gamma$ implies  that $\alpha=\beta\beta\alpha\gamma\gamma$ and the inclusion $(\textsf{H}^{1}_{\operatorname{dom}\beta})\beta\subseteq \textsf{V}^1$ implies that $(\textsf{H}^{1}_{\operatorname{dom}(\beta\beta)})\beta\beta\subseteq \textsf{H}^1$. Now, by $(i)$, the restrictions $(\beta\beta)|_{\operatorname{dom}\alpha}\colon \operatorname{dom}\alpha\rightharpoonup \mathbb{N}\times\mathbb{N}$ and $(\gamma\gamma)|_{\operatorname{ran}\alpha}\colon \operatorname{ran}\alpha\rightharpoonup \mathbb{N}\times\mathbb{N}$ are identity partial maps. Since $(\textsf{H}^{1}_{\operatorname{dom}\beta})\beta\subseteq \textsf{V}^1$, Theorem~\ref{theorem-2.7}$(2)$ implies that $(i,j)\beta\leqslant(j,i)$ for any $(i,j)\in\operatorname{dom}\alpha$. Suppose that $(i,j)\beta<(j,i)$ for some $(i,j)\in\operatorname{dom}\alpha$. Again, by Theorem~\ref{theorem-2.7}$(2)$ we get that  $(j,i)\beta\leqslant(i,j)$ and hence we have that $(i,j)=(i,j)\beta\beta<(j,i)\beta\leqslant(i,j)$, a contradiction. The obtained contradiction implies that $(i,j)\beta=(j,i)$ for each $(i,j)\in\operatorname{dom}\alpha$. Next, the inclusion $(\textsf{H}^{1}_{\operatorname{dom}\beta})\beta\subseteq \textsf{V}^1$ and the equality $\alpha=\beta\alpha\gamma$ imply that $(\textsf{H}^{1}_{\operatorname{dom}\gamma})\gamma\subseteq \textsf{V}^1$. Then the similar arguments as in the above part of the proof imply that $(m,n)\gamma=(n,m)$ for each $(m,n)\in\operatorname{ran}\alpha$.

Now, the property that $(i,j)\beta=(j,i)$ for each $(i,j)\in\operatorname{dom}\alpha$ and $(m,n)\gamma=(n,m)$ for each $(m,n)\in\operatorname{ran}\alpha$, and the equality $\alpha=\beta\alpha\gamma$ imply that $\operatorname{dom}\alpha=\overline{\operatorname{dom}\alpha}$ and $\operatorname{ran}\alpha=\overline{\operatorname{ran}\alpha}$. Fix an arbitrary $(i,j)\in\operatorname{dom}\alpha$. Put $(m,n)=(i,j)\alpha$. Then the above part of the proof of this item implies that $(m,n)=(i,j)\alpha=(i,j)\beta\alpha\gamma=(j,i)\alpha\gamma$ and hence $(n,m)=(m,n)\varpi=(j,i)\alpha\gamma\varpi=(j,i)\alpha$.
\end{proof}

\begin{lemma}\label{lemma-3.10}
Let $\alpha$ and $\beta$ be elements of the semigroup $\mathscr{P\!O}\!_{\infty}(\mathbb{N}^2_{\leqslant})$ and $A$ be a cofinite subset of $\mathbb{N}\times\mathbb{N}$. If the restriction $(\alpha\beta)|_A\colon A\rightharpoonup \mathbb{N}\times\mathbb{N}$ is an identity partial map then one of the following conditions holds:
\begin{itemize}
  \item[$(i)$] the restrictions $\alpha|_A\colon A\rightharpoonup \mathbb{N}\times\mathbb{N}$ and $\beta|_A\colon A\rightharpoonup \mathbb{N}\times\mathbb{N}$ are identity partial maps;
  \item[$(ii)$] $(i,j)\alpha=(j,i)$ for all $(i,j)\in A$ and $(m,n)\beta=(n,m)$ for all $(m,n)\in \overline{A}$.
\end{itemize}
\end{lemma}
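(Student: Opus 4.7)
The plan is to apply Lemma~\ref{lemma-2.4} to $\alpha$ and to $\beta$ separately, yielding four combinations of behaviour on $\textsf{H}^1$, and then use Theorem~\ref{theorem-2.7} together with the identity hypothesis on $A$ and the cofiniteness of $A$ to eliminate the two ``mixed'' combinations and extract conditions $(i)$ and $(ii)$ from the remaining two.

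Case 1: $(\textsf{H}^{1}_{\operatorname{dom}\alpha})\alpha\subseteq \textsf{H}^1$. First I rule out the possibility that $(\textsf{H}^{1}_{\operatorname{dom}\beta})\beta\subseteq \textsf{V}^1$. In that situation, Theorem~\ref{theorem-2.7}$(1)$ gives $(i,j)\alpha\leqslant(i,j)$ for each $(i,j)\in\operatorname{dom}\alpha$, and Theorem~\ref{theorem-2.7}$(2)$ gives $(m,n)\beta\leqslant(n,m)$ for each $(m,n)\in\operatorname{dom}\beta$. For any $(i,j)\in A$, writing $(k,l)=(i,j)\alpha$, I obtain $k\leq i$, $l\leq j$ from the first inequality, and $(i,j)=(k,l)\beta\leqslant(l,k)$, i.e.\ $i\leq l$, $j\leq k$, from the second. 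Combining forces $i=j=k=l$, so $(i,j)$ must lie on the diagonal. Since $A$ is cofinite in $\mathbb{N}\times\mathbb{N}$ it contains infinitely many off-diagonal points, a contradiction. Hence $(\textsf{H}^{1}_{\operatorname{dom}\beta})\beta\subseteq \textsf{H}^1$, and then the two inequalities $(k,l)=(i,j)\alpha\leqslant(i,j)$ and $(i,j)=(k,l)\beta\leqslant(k,l)$ force $(k,l)=(i,j)$. Thus both $\alpha|_A$ and $\beta|_A$ are identity partial maps, which is conclusion $(i)$.

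Case 2: $(\textsf{H}^{1}_{\operatorname{dom}\alpha})\alpha\subseteq \textsf{V}^1$. A completely analogous ``diagonal collapse'' argument rules out $(\textsf{H}^{1}_{\operatorname{dom}\beta})\beta\subseteq \textsf{H}^1$: one would get $k\leq j$, $l\leq i$ (from Theorem~\ref{theorem-2.7}$(2)$ applied to $\alpha$) together with $i\leq k$, $j\leq l$ (from Theorem~\ref{theorem-2.7}$(1)$ applied to $\beta$), again forcing $(i,j)$ onto the diagonal and contradicting cofiniteness of $A$. So $(\textsf{H}^{1}_{\operatorname{dom}\beta})\beta\subseteq \textsf{V}^1$, and Theorem~\ref{theorem-2.7}$(2)$ yields both $(i,j)\alpha\leqslant(j,i)$ and $(m,n)\beta\leqslant(n,m)$. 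For $(i,j)\in A$ with $(k,l)=(i,j)\alpha$, combining $k\leq j$, $l\leq i$ with $i\leq l$, $j\leq k$ forces $(k,l)=(j,i)$. Therefore $(i,j)\alpha=(j,i)$ for all $(i,j)\in A$, and since $\alpha$ maps $A$ bijectively onto $\overline{A}$ the identity $(k,l)\beta=(i,j)$ becomes $(n,m)\beta=(m,n)$, i.e.\ $(m,n)\beta=(n,m)$ for every $(m,n)\in\overline{A}$, which is conclusion $(ii)$.

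The main obstacle is ruling out the two mixed sub-cases in a uniform way; this is precisely where the cofiniteness of $A$ is essential, as it guarantees off-diagonal points in $A$ and so prevents the composite $\alpha\beta$ from being the identity on $A$ unless $\alpha$ and $\beta$ either both preserve or both swap the two ``axes'' $\textsf{H}^1$ and $\textsf{V}^1$.
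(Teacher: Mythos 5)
Your proof is correct and rests on the same two pillars as the paper's argument: the dichotomy of Lemma~\ref{lemma-2.4} and the pointwise inequalities of Theorem~\ref{theorem-2.7}. The execution differs in two respects. First, where the paper simply asserts that $\beta$ must have the same orientation on $\textsf{H}^1$ as $\alpha$ (``the definition of the semigroup implies\dots''), you actually prove it: your diagonal-collapse argument shows that a mixed pair of orientations forces every point of $A\cap\operatorname{dom}(\alpha\beta)$ onto the diagonal, which cofiniteness of $A$ forbids -- this makes explicit exactly where the hypothesis that $A$ is cofinite enters. Second, you handle the swapping case directly by combining $(i,j)\alpha\leqslant(j,i)$ with $(k,l)\beta\leqslant(l,k)$, whereas the paper reduces it to the preserving case via the factorization $\alpha\beta=(\alpha\varpi)(\varpi\beta)$; your route is slightly longer but self-contained, the paper's is shorter. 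A minor bonus of your squeeze $(i,j)=(k,l)\beta\leqslant(k,l)=(i,j)\alpha\leqslant(i,j)$ is that it only evaluates $\beta$ at $(i,j)\alpha$, which is guaranteed to lie in $\operatorname{dom}\beta$, while the paper's chain $(i,j)=(i,j)\alpha\beta<(i,j)\beta\leqslant(i,j)$ also evaluates $\beta$ at $(i,j)$ itself. Both proofs share the statement's implicit convention that the points of $A$ under discussion lie in $\operatorname{dom}(\alpha\beta)$, so this is not a gap relative to the paper.
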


\begin{proof}
By Lemma~\ref{lemma-2.4} we have that either $(\textsf{H}^{1}_{\operatorname{dom}\alpha})\alpha\subseteq \textsf{H}^1$ or $(\textsf{H}^1_{\operatorname{dom}\alpha})\alpha\subseteq \textsf{V}^1$. Suppose that the inclusion $(\textsf{H}^{1}_{\operatorname{dom}\alpha})\alpha\subseteq \textsf{H}^1$ holds. Then the definition of the semigroup $\mathscr{P\!O}\!_{\infty}(\mathbb{N}^2_{\leqslant})$ implies that $(\textsf{H}^{1}_{\operatorname{dom}\beta})\beta\subseteq \textsf{H}^1$. By Theorem~\ref{theorem-2.7}$(1)$ we have that  $(i,j)\alpha\leqslant(i,j)$ for any $(i,j)\in\operatorname{dom}\alpha$ and  $(m,n)\beta\leqslant(m,n)$ for any $(m,n)\in\operatorname{dom}\beta$. Suppose that $(i,j)\alpha<(i,j)$ for some $(i,j)\in A$. Then we have that
\begin{equation*}
    (i,j)=(i,j)\alpha\beta<(i,j)\beta\leqslant(i,j),
\end{equation*}
which contradicts the assumption that the restriction $(\alpha\beta)|_A\colon A\rightharpoonup \mathbb{N}\times\mathbb{N}$ is an identity partial map. Hence the restriction $\alpha|_A\colon A\rightharpoonup \mathbb{N}\times\mathbb{N}$ is an identity partial map. Similar arguments imply that the restriction $\beta|_A\colon A\rightharpoonup \mathbb{N}\times\mathbb{N}$ is also an identity partial map. Thus, in the case when $(\textsf{H}^{1}_{\operatorname{dom}\alpha})\alpha\subseteq \textsf{H}^1$, item $(i)$ holds.

Suppose that the inclusion $(\textsf{H}^{1}_{\operatorname{dom}\alpha})\alpha\subseteq \textsf{V}^1$ holds. By the definition of the semigroup $\mathscr{P\!O}\!_{\infty}(\mathbb{N}^2_{\leqslant})$ we have that $(\textsf{H}^{1}_{\operatorname{dom}\beta})\beta\subseteq \textsf{V}^1$, $\alpha\beta=(\alpha\varpi)(\varpi\beta)$, $(\textsf{H}^{1}_{\operatorname{dom}(\alpha\varpi)})\alpha\varpi\subseteq \textsf{H}^1$ and $(\textsf{H}^{1}_{\operatorname{dom}(\varpi\beta)})\varpi\beta\subseteq \textsf{H}^1$. Then the previous part of the proof implies that the restrictions $(\alpha\varpi)|_A\colon A\rightharpoonup \mathbb{N}\times\mathbb{N}$ and $(\varpi\beta)|_A\colon A\rightharpoonup \mathbb{N}\times\mathbb{N}$ are identity partial maps. Since $(\alpha\varpi)\varpi=\alpha$ and $\varpi(\varpi\beta)=\beta$, the inclusion $(\textsf{H}^{1}_{\operatorname{dom}\alpha})\alpha\subseteq \textsf{V}^1$ implies that $(ii)$ holds.
\end{proof}

\begin{lemma}\label{lemma-3.11}
Let $\alpha$ and $\beta$ be elements of the semigroup $\mathscr{P\!O}\!_{\infty}(\mathbb{N}^2_{\leqslant})$ and $A$ be a cofinite subset of $\mathbb{N}\times\mathbb{N}$. If $(i,j)\alpha\beta=(j,i)$ for all $(i,j)\in A$, then one of the following conditions holds:
\begin{itemize}
  \item[$(i)$] the restriction $\alpha|_A\colon A\rightharpoonup \mathbb{N}\times\mathbb{N}$ is an identity partial map and $(m,n)\beta=(n,m)$ for all $(m,n)\in A$;
  \item[$(ii)$] $(i,j)\alpha=(j,i)$ for all $(i,j)\in A$ and $\beta|_{\overline{A}}\colon \overline{A}\rightharpoonup \mathbb{N}\times\mathbb{N}$ is an identity partial map.
\end{itemize}
\end{lemma}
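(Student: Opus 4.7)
The plan is to reduce Lemma~\ref{lemma-3.11} to Lemma~\ref{lemma-3.10} by post-composing with the involution $\varpi$. I would introduce $\beta':=\beta\varpi$, which belongs to $\mathscr{P\!O}\!_{\infty}(\mathbb{N}^2_{\leqslant})$ since this semigroup is closed under composition and contains $\varpi$, and observe that $\beta=\beta'\varpi$ because $\varpi\varpi=\mathbb{I}$. The hypothesis $(i,j)\alpha\beta=(j,i)$ for all $(i,j)\in A$ then translates into
\begin{equation*}
(i,j)\alpha\beta'=(i,j)\alpha\beta\varpi=(j,i)\varpi=(i,j), \qquad (i,j)\in A,
\end{equation*}
so the restriction $(\alpha\beta')|_A\colon A\rightharpoonup\mathbb{N}\times\mathbb{N}$ is an identity partial map. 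Since $A$ is cofinite, the hypothesis of Lemma~\ref{lemma-3.10} is satisfied by the pair $(\alpha,\beta')$.

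Lemma~\ref{lemma-3.10} then furnishes two mutually exclusive alternatives. In the first alternative, $\alpha|_A$ and $\beta'|_A$ are identity partial maps; using $\beta=\beta'\varpi$, the equality $(m,n)\beta'=(m,n)$ for $(m,n)\in A$ yields
\begin{equation*}
(m,n)\beta=(m,n)\beta'\varpi=(m,n)\varpi=(n,m),
\end{equation*}
which is exactly conclusion $(i)$ of the lemma. In the second alternative, $(i,j)\alpha=(j,i)$ for all $(i,j)\in A$ and $(m,n)\beta'=(n,m)$ for all $(m,n)\in\overline{A}$; applying $\varpi$ on the right gives
\begin{equation*}
(m,n)\beta=(m,n)\beta'\varpi=(n,m)\varpi=(m,n), \qquad (m,n)\in\overline{A},
\end{equation*}
so $\beta|_{\overline{A}}\colon\overline{A}\rightharpoonup\mathbb{N}\times\mathbb{N}$ is an identity partial map, recovering conclusion $(ii)$.

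I do not anticipate any serious obstacle. The only nontrivial step is recognising that the swap target $(j,i)$ equals $(i,j)\varpi$, so that twisting by $\varpi$ on the right reduces the problem to the identity-restriction case already handled in Lemma~\ref{lemma-3.10}. Everything else amounts to routine bookkeeping: verifying that $\beta\varpi$ still lies in $\mathscr{P\!O}\!_{\infty}(\mathbb{N}^2_{\leqslant})$, that $A$ remains cofinite when Lemma~\ref{lemma-3.10} is invoked, and that the two alternatives returned by Lemma~\ref{lemma-3.10} map bijectively onto the two alternatives claimed here.
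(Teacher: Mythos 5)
Your proposal is correct and is essentially the paper's own argument: the paper likewise observes that the hypothesis makes $(\alpha(\beta\varpi))|_A$ an identity partial map, invokes Lemma~\ref{lemma-3.10} for the pair $(\alpha,\beta\varpi)$, and translates the two alternatives back via $(\beta\varpi)\varpi=\beta$. Your write-up just spells out the bookkeeping that the paper leaves implicit.
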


\begin{proof}
The assumption of the lemma implies that the restriction $\alpha(\beta\varpi)|_A\colon \rightharpoonup \mathbb{N}\times\mathbb{N}$ is an identity partial map. Hence by Lemma~\ref{lemma-3.10} only one of the following conditions holds:
\begin{itemize}
  \item[$(1)$] the restrictions $\alpha|_A\colon A\rightharpoonup \mathbb{N}\times\mathbb{N}$ and $(\beta\varpi)|_A\colon A\rightharpoonup \mathbb{N}\times\mathbb{N}$ are identity partial maps;
  \item[$(2)$] $(i,j)\alpha=(j,i)$ for all $(i,j)\in A$ and $(m,n)\beta\varpi=(n,m)$ for all $(m,n)\in \overline{A}$.
\end{itemize}
Since $(\beta\varpi)\varpi=\beta$, the above arguments imply the statement of the lemma.
\end{proof}

Elementary calculations and the definition of the semigroup $\mathscr{P\!O}\!_{\infty}(\mathbb{N}^2_{\leqslant})$ imply the following proposition.

\begin{proposition}\label{proposition-3.12}
Let $\alpha$ and $\beta$ be elements of the semigroup $\mathscr{P\!O}\!_{\infty}(\mathbb{N}^2_{\leqslant})$. Then the following assertions hold:
\begin{itemize}
  \item[$(i)$] if the restriction $\beta|_{\operatorname{ran}\alpha}\colon \operatorname{ran}\alpha\rightharpoonup \mathbb{N}\times\mathbb{N}$ is an identity partial map then $\alpha\beta=\alpha\mathbb{I}=\alpha$;
  \item[$(ii)$] if the restriction $\beta|_{\operatorname{dom}\alpha}\colon \operatorname{dom}\alpha\rightharpoonup \mathbb{N}\times\mathbb{N}$ is an identity partial map then $\beta\alpha=\mathbb{I}\alpha=\alpha$;
  \item[$(iii)$] if $(m,n)\beta=(n,m)$ for all $(m,n)\in \operatorname{ran}\alpha$ then $\alpha\beta=\alpha\varpi$;
  \item[$(iv)$] if $(m,n)\beta=(n,m)$ for all $(m,n)\in \overline{\operatorname{dom}\alpha}$ then $\beta\alpha=\varpi\alpha$.
\end{itemize}
\end{proposition}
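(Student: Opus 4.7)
The plan is to verify each of the four assertions directly from the definition of composition of partial maps, using only the hypothesis that $\alpha,\beta$ are injective. For each claim, I need to check two things: that the two sides of the asserted equality have the same domain, and that they agree pointwise on that common domain.

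For part $(i)$, recall that $\operatorname{dom}(\alpha\beta)=\{y\in\operatorname{dom}\alpha\colon (y)\alpha\in\operatorname{dom}\beta\}$. The hypothesis that $\beta$ restricted to $\operatorname{ran}\alpha$ is the identity partial map gives $\operatorname{ran}\alpha\subseteq\operatorname{dom}\beta$, so $\operatorname{dom}(\alpha\beta)=\operatorname{dom}\alpha$. Moreover $(y)\alpha\beta=((y)\alpha)\beta=(y)\alpha$ since $(y)\alpha\in\operatorname{ran}\alpha$, and this yields $\alpha\beta=\alpha$. Part $(iii)$ is completely analogous: the hypothesis again forces $\operatorname{ran}\alpha\subseteq\operatorname{dom}\beta$, so $\operatorname{dom}(\alpha\beta)=\operatorname{dom}\alpha=\operatorname{dom}(\alpha\varpi)$, and for $y\in\operatorname{dom}\alpha$ with $(y)\alpha=(m,n)$ we have $(y)\alpha\beta=(m,n)\beta=(n,m)=(m,n)\varpi=(y)\alpha\varpi$.

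For parts $(ii)$ and $(iv)$ an extra subtlety appears, because pre-composing on the left means the domain $\{y\in\operatorname{dom}\beta\colon (y)\beta\in\operatorname{dom}\alpha\}$ could a priori contain elements outside the target domain. Here the injectivity of $\beta$ does the work. In $(ii)$, if $y\in\operatorname{dom}(\beta\alpha)$ then $(y)\beta\in\operatorname{dom}\alpha$, so $(y)\beta=z$ for some $z\in\operatorname{dom}\alpha$; since $\beta$ is the identity on $\operatorname{dom}\alpha$ we also have $(z)\beta=z$, and injectivity of $\beta$ gives $y=z\in\operatorname{dom}\alpha$. Hence $\operatorname{dom}(\beta\alpha)=\operatorname{dom}\alpha$, and $(y)\beta\alpha=(y)\alpha$ for each such $y$. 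For $(iv)$, $\overline{\operatorname{dom}\alpha}\subseteq\operatorname{dom}\beta$ and $(m,n)\beta=(n,m)\in\operatorname{dom}\alpha$ for $(m,n)\in\overline{\operatorname{dom}\alpha}$, giving $\overline{\operatorname{dom}\alpha}\subseteq\operatorname{dom}(\beta\alpha)$. Conversely, if $y\in\operatorname{dom}(\beta\alpha)$ and $(y)\beta=z\in\operatorname{dom}\alpha$, then $\overline{z}\in\overline{\operatorname{dom}\alpha}$ satisfies $(\overline{z})\beta=z=(y)\beta$, so by injectivity $y=\overline{z}\in\overline{\operatorname{dom}\alpha}$. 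Thus $\operatorname{dom}(\beta\alpha)=\overline{\operatorname{dom}\alpha}=\operatorname{dom}(\varpi\alpha)$, and $(y)\beta\alpha=(y\varpi)\alpha=(y)\varpi\alpha$.

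The only mildly non-obvious step is the injectivity argument used to prove the reverse domain inclusions in $(ii)$ and $(iv)$; everything else is a direct unpacking of the definition of composition. No prior result from the paper is needed beyond the fact, built into the semigroup's definition, that its elements are injective partial maps.
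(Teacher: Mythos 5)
Your proof is correct and matches the paper's intent: the paper offers no written argument for Proposition~\ref{proposition-3.12}, stating only that it follows from ``elementary calculations and the definition of the semigroup,'' and your direct verification of domains and pointwise values is exactly that calculation. Your observation that injectivity of $\beta$ is needed in parts $(ii)$ and $(iv)$ to rule out extra points in $\operatorname{dom}(\beta\alpha)$ is a genuine (if small) subtlety that the paper leaves implicit, and you handle it correctly.
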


\begin{theorem}\label{theorem-3.12}
$\mathscr{D}=\mathscr{J}$ in $\mathscr{P\!O}\!_{\infty}(\mathbb{N}^2_{\leqslant})$.
\end{theorem}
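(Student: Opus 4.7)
The inclusion $\mathscr{D}\subseteq\mathscr{J}$ holds in every semigroup, so the plan is to establish the reverse inclusion. Assume $\alpha\mathscr{J}\beta$. Since $\mathscr{P\!O}\!_{\infty}(\mathbb{N}^2_{\leqslant})$ is a monoid, this means $\alpha=\gamma_1\beta\gamma_2$ and $\beta=\delta_1\alpha\delta_2$ for some $\gamma_1,\gamma_2,\delta_1,\delta_2\in\mathscr{P\!O}\!_{\infty}(\mathbb{N}^2_{\leqslant})$, so substitution yields
\begin{equation*}
\alpha=(\gamma_1\delta_1)\alpha(\delta_2\gamma_2).
\end{equation*}
By Proposition~\ref{proposition-3.1} and Theorem~\ref{theorem-3.7}$(iv)$ it suffices to show that $\beta\in\{\alpha,\varpi\alpha,\alpha\varpi,\varpi\alpha\varpi\}$; equivalently, to pin down $\delta_1$ on $\operatorname{dom}\alpha$ and $\delta_2$ on $\operatorname{ran}\alpha$ up to the two elements of $H(\mathbb{I})=\{\mathbb{I},\varpi\}$.

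The first step is to feed the displayed equation into Lemma~\ref{lemma-3.9} with $\gamma_1\delta_1$ and $\delta_2\gamma_2$ in the roles of its $\beta$ and $\gamma$. Lemma~\ref{lemma-2.4} separates the two alternatives of that lemma into Case~A, in which $(\gamma_1\delta_1)|_{\operatorname{dom}\alpha}$ and $(\delta_2\gamma_2)|_{\operatorname{ran}\alpha}$ are identity partial maps, and Case~B, in which $\gamma_1\delta_1$ and $\delta_2\gamma_2$ act as the coordinate swap on $\operatorname{dom}\alpha$ and $\operatorname{ran}\alpha$ respectively and $\alpha=\varpi\alpha\varpi$. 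In Case~A I would next apply Lemma~\ref{lemma-3.10} to the factorizations $\gamma_1\delta_1$ on the cofinite set $A=\operatorname{dom}\alpha$ and $\delta_2\gamma_2$ on $A=\operatorname{ran}\alpha$; in Case~B I would apply Lemma~\ref{lemma-3.11} to the same two factorizations. Either way, the outcome is a list of two alternatives for $\delta_1$ (identity or coordinate swap on the appropriate cofinite set) and two for $\delta_2$.

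Given those four combinations in each case, Proposition~\ref{proposition-3.12} collapses $\delta_1\alpha\delta_2$ to one of $\alpha$, $\varpi\alpha$, $\alpha\varpi$, $\varpi\alpha\varpi$ in a single line per sub-case. In Case~B I need the additional observation, obtained from the equality $\alpha=\varpi\alpha\varpi$ together with Proposition~\ref{proposition-2.9}, that $\operatorname{dom}\alpha=\overline{\operatorname{dom}\alpha}$ and $\operatorname{ran}\alpha=\overline{\operatorname{ran}\alpha}$; this is what lets me align the ``swap on $\operatorname{dom}\alpha$'' conclusion of Lemma~\ref{lemma-3.11} with the ``swap on $\overline{\operatorname{dom}\alpha}$'' hypothesis of Proposition~\ref{proposition-3.12}$(iv)$, and analogously on the range side. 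That symmetry check is the only genuine subtlety; the remainder is routine book-keeping of an eight-way case split, with every branch ending in $\beta=\mu\alpha\nu$ for some $\mu,\nu\in H(\mathbb{I})$ and hence $\alpha\mathscr{D}\beta$ by Theorem~\ref{theorem-3.7}$(iv)$.
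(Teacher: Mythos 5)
Your proposal is correct and follows essentially the same route as the paper: the same substitution $\alpha=(\gamma_1\delta_1)\alpha(\delta_2\gamma_2)$, the same dichotomy via Lemma~\ref{lemma-3.9}, and the same use of Lemmas~\ref{lemma-3.10} and~\ref{lemma-3.11} together with Proposition~\ref{proposition-3.12} to force $\beta=\omega_1\alpha\omega_2$ with $\omega_1,\omega_2\in H(\mathbb{I})$. The only (immaterial) difference is that the paper runs the argument symmetrically on both substituted equations, while you extract everything from the single equation for $\alpha$; you also correctly isolate the $\operatorname{dom}\alpha=\overline{\operatorname{dom}\alpha}$ alignment as the one subtlety in the swap case.
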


\begin{proof}
The inclusion $\mathscr{D}\subseteq\mathscr{J}$ is trivial.

Fix any $\alpha,\beta\in\mathscr{P\!O}\!_{\infty}(\mathbb{N}^2_{\leqslant})$ such that $\alpha\mathscr{J}\beta$. Then there exist $\gamma_\alpha,\delta_\alpha, \gamma_\beta,\delta_\beta\in\mathscr{P\!O}\!_{\infty}(\mathbb{N}^2_{\leqslant})$ such that $\alpha=\gamma_\alpha\beta\delta_\alpha$ and $\beta=\gamma_\beta\alpha\delta_\beta$ (see \cite{GreenJ-1951} or \cite[Section~II.1]{Grillet-1995}). Hence we have that $\alpha=\gamma_\alpha\gamma_\beta\alpha\delta_\beta\delta_\alpha$ and $\beta=\gamma_\beta\gamma_\alpha\beta\delta_\alpha\delta_\beta$.

Suppose that $(\textsf{H}^{1}_{\operatorname{dom}(\gamma_\alpha\gamma_\beta)})\gamma_\alpha\gamma_\beta\subseteq \textsf{H}^1$. By Proposition~\ref{proposition-2.10}, $(\textsf{H}^{1}_{\operatorname{dom}(\gamma_\beta\gamma_\alpha)})\gamma_\beta\gamma_\alpha\subseteq \textsf{H}^1$. Lemma~\ref{lemma-3.9}$(i)$ implies that the restrictions $(\gamma_\alpha\gamma_\beta)|_{\operatorname{dom}\alpha}\colon \operatorname{dom}\alpha\rightharpoonup \mathbb{N}\times\mathbb{N}$, $(\delta_\beta\delta_\alpha)|_{\operatorname{ran}\alpha}\colon \operatorname{ran}\alpha\rightharpoonup \mathbb{N}\times\mathbb{N}$, $(\gamma_\beta\gamma_\alpha)|_{\operatorname{dom}\beta}\colon \operatorname{dom}\beta\rightharpoonup \mathbb{N}\times\mathbb{N}$ and $(\delta_\alpha\delta_\beta)|_{\operatorname{ran}\beta}\colon \operatorname{ran}\beta\rightharpoonup \mathbb{N}\times\mathbb{N}$ are identity partial maps. Then by Lemma~\ref{lemma-3.10} and Proposition~\ref{proposition-3.12} there exist $\omega_1,\omega_2\in H(\mathbb{I})$ such that $\gamma_\beta\alpha=\omega_1\alpha$, $\alpha\delta_\beta=\alpha\omega_2$, $\gamma_\alpha\beta=\omega_1\beta$ and $\beta\delta_\alpha=\beta\omega_2$. This implies that
\begin{equation*}
\alpha=\gamma_\alpha\beta\delta_\alpha=\omega_1\beta\delta_\alpha=\omega_1\beta\omega_2 \qquad \hbox{and}\qquad \beta=\gamma_\beta\alpha\delta_\beta=\omega_1\alpha\delta_\beta=\omega_1\alpha\omega_2,
\end{equation*}
and hence by Theorem~\ref{theorem-3.7} we get that $\alpha\mathscr{D}\beta$.

Suppose that $(\textsf{H}^{1}_{\operatorname{dom}(\gamma_\alpha\gamma_\beta)})\gamma_\alpha\gamma_\beta\subseteq \textsf{V}^1$. Then by Proposition~\ref{proposition-2.10} and Lemma~\ref{lemma-2.4} we have that  $(\textsf{H}^{1}_{\operatorname{dom}(\gamma_\beta\gamma_\alpha)})\gamma_\beta\gamma_\alpha\subseteq \textsf{V}^1$. Now, as in the above part of the proof the statement of the theorem follows from Lemma~\ref{lemma-3.9}$(ii)$, Lemma~\ref{lemma-3.11} and Proposition~\ref{proposition-3.12}.
\end{proof}


\section*{Acknowledgements}

The author acknowledges T. Banakh and A. Ravsky for their comments and suggestions.

\end{document}